\pgfplotsset{compat=1.18}
\newtheorem{theorem}{Theorem}[section]
\newtheorem{lemma}[theorem]{Lemma}
\newtheorem{corollary}[theorem]{Corollary}
\newtheorem{claim}[theorem]{Claim}
\Crefname{claim}{Claim}{Claims}
\theoremstyle{definition}
\newtheorem{remark}[theorem]{Remark}
\newtheorem{question}[theorem]{Question}
\newtheorem{property}{Property}
\renewcommand{\theproperty}{P\arabic{property}}
\Crefname{property}{Property}{Properties}
\newcommand{\car}{\mathbin{\Box}}
\newcommand{\symdif}{\mathbin{\triangle}}
\newcommand{\Bin}{\operatorname{Bin}}
\newcommand{\Boot}{\operatorname{Boot}}
\newcommand{\dmin}{\delta(G)}
\newcommand{\dist}{\operatorname{dist}}
\newcommand{\supp}{\operatorname{supp}}
\newcommand{\Inv}{\operatorname{Inv}}
\renewcommand{\le}{\leqslant}
\renewcommand{\leq}{\leqslant}
\renewcommand{\ge}{\geqslant}
\renewcommand{\geq}{\geqslant}
\renewcommand{\emptyset}{\varnothing}
\newcommand{\eps}{\varepsilon}
\providecommand\given{\nonscript\:\ifthenelse{\equal{\delimsize}{}}{\big\vert}{\delimsize\vert}\nonscript\:\mathopen{}}
\let\Pr\undefined
\DeclarePairedDelimiterXPP\Pr[1]{\mathbb{P}}{[}{]}{}{#1}
\DeclarePairedDelimiterXPP\Ex[1]{\mathbb{E}}{[}{]}{}{#1}
\DeclarePairedDelimiterXPP\Prs[1]{\mathbb{P}^*}{[}{]}{}{#1}
\DeclarePairedDelimiterXPP\Exs[1]{\mathbb{E}^*}{[}{]}{}{#1}
\DeclarePairedDelimiterXPP\Prp[1]{\mathbb{P}'}{[}{]}{}{#1}
\title{Majority bootstrap percolation on the permutahedron and other high-dimensional graphs}
\author{Maur\'icio Collares \and Joshua Erde \and Anna Geisler \and Mihyun Kang}
\address{Institute of Discrete Mathematics, Graz University of Technology, Steyrergasse 30, 8010 Graz, Austria}
\email{mauricio@collares.org, \{erde, geisler, kang\}@math.tugraz.at}
\subjclass{60K35, 60C05 (Primary)}
\begin{document}
\begin{abstract}
Majority bootstrap percolation is a model of infection spreading in networks. Starting with a set of initially infected vertices, new vertices become infected once half of their neighbours are infected. Balogh, Bollob\'{a}s and Morris studied this process on the hypercube and showed that there is a phase transition as the density of the initially infected set increases. Generalising their results to a broad class of high-dimensional graphs, the authors of this work established similar bounds on the critical window, establishing a universal behaviour for these graphs.

These methods necessitated an exponential bound on the order of the graphs in terms of their degrees. In this paper, we consider a slightly more restrictive class of high-dimensional graphs, which nevertheless covers most examples considered previously. Under these stronger assumptions, we are able to show that this universal behaviour holds in graphs of \emph{superexponential order}. As a concrete and motivating example, we apply this result to the permutahedron, a symmetric high-dimensional graph of superexponential order which arises naturally in many areas of mathematics. Our methods also allow us to slightly improve the bounds on the critical window given in previous work, in particular in the case of the hypercube.

Finally, the upper and lower bounds on the critical window depend on the maximum and minimum degree of the graph, respectively, leading to much worse bounds for irregular graphs. We also analyse an explicit example of a high-dimensional \emph{irregular} graph, namely the Cartesian product of stars and determine the first two terms in the expansion of the critical probability, which in this case is determined by the minimum degree.
\end{abstract}
\maketitle

\section{Introduction}

\subsection{Motivation and background}

Bootstrap percolation is a discrete model for infection spreading over a network.
Starting with some set of initially infected vertices in a graph, new vertices are added to this set in a discrete sequence of time steps according to some local and homogenous rules.
Once infected, a vertex remains in this state forever, and so bootstrap percolation is an example of \emph{monotone cellular automata}.

The bootstrap percolation process was introduced in 1979 in the work on magnetic systems of Chalupa, Leath and Reich \cite{Chalupa1979Grid}.
It has found various applications in physics \cite{Morris2017Bootstrap}, for example to describe interacting particle systems \cite{JLMTZ17} or the zero-temperature Ising model \cite{Fontes2002Ising}.
Furthermore, bootstrap percolation is used to model processes in other areas, such as neural networks \cite{Amini2010Neural} or opinion spreading in sociology \cite{Granovetter1978Sociology}.
There are also connections to other processes on discrete structures, such as \emph{weak saturation} introduced by Bollob\'as \cite{Bollobas1968Saturation}, where edges of a graph $G$ get `activated' once they complete a copy of a fixed graph $H$.

The \emph{$r$-neighbour bootstrap percolation process} evolves in a sequence of time steps, called \emph{rounds}, where the set of infected vertices is updated in each round.
Starting with an initial set $A_0 \subseteq V(G)$ of infected vertices, in the $i$th round we add a vertex $v \in V(G)$ to $A_i$ if it is already infected, or at least a certain threshold $r$ of its neighbours are already infected, i.e., for $i \in \mathbb{N}$ we set
\[
A_{i} = A_{i-1} \cup \left\{ v \in V(G) \colon \left|N(v) \cap A_{i-1}\right| \geq r \right\}.
\]
This gives a non-decreasing sequence of sets $A_0 \subseteq A_1 \subseteq \ldots$, which is fully determined by the initial set $A_0$.
If there is an $i \in \mathbb{N}$ such that $A_i=V(G)$, the set $A_0$ is said to \emph{percolate} on $G$.
In other words, if we start with a percolating set, then the infection eventually spreads to the whole graph $G$.

In order to study the \emph{typical} behaviour of this process on a given graph $G$, the choice of $A_0$ is randomized, i.e., each vertex is added to $A_0$ initially with some probability $p$.
As the density of the initially infected set of vertices $A_0$ increases, the probability that the process percolates increases as well.
More precisely, for $p \in [0,1]$ let $\textbf{A}_p$ be a $p$-random subset of $V(G)$ where each vertex in $V(G)$ is included in $\textbf{A}_p$ independently with probability $p$.
Then we define
\[
  \Phi(p,G) \coloneqq \Pr{\textbf{A}_p \text{ percolates on } G}
\]
to be the probability that a $p$-random subset of $V(G)$ percolates. The critical probability is given by
\[
p_c(G) \coloneqq \inf \left\{ p \in (0,1) \colon  \Phi(p,G) \geq \frac{1}{2} \right\}.
\]
Throughout the paper we refer to a bootstrap percolation process initialised with a random set $A_0 \sim \textbf{A}_p$ as the \emph{random bootstrap percolation process.}

A series of results treated the critical probability of the $r$-neighbour bootstrap percolation process on grids \cite{AizenmanS2001Threshold,Balogh2003Sharp,Cerf1998Lattice3D,CerfManzo2002FinVol,GrHo08,HaMo19,holroyd2002sharp}, culminating in the ground-breaking result of Balogh, Bollobás, Duminil-Copin and Morris \cite{balogh2011Grid} who determined the sharp threshold for $r$-neighbour bootstrap percolation process on the $d$-dimensional grid for any $r, d \in \mathbb{N}$.
While many results treat the random bootstrap percolation process on bounded dimensional grids, considering the asymptotics as the side-lengths tend to infinity, there has also been interest in the asymptotics on grids of bounded side-length as the dimension tends to infinite.
Particularly well-studied is the `simplest' case of the $n$-dimensional hypercube.
Here, when $r=2$, Balogh and Bollobás \cite{BaBo06} determined a coarse threshold for $2$-neighbour bootstrap percolation on the hypercube and this was strengthened to a sharp threshold by Balogh, Bollobás and Morris \cite{BaBoMo2010HighDimGrid}, who also extended this result to other high-dimensional grids.

While the results mentioned above consider constant $r$, another well-studied instance of this process is the \emph{majority bootstrap percolation process}, where we set $r(v)=d(v)/2$ for every $v \in V(G)$.
In other words, a vertex becomes infected once at least half of its neighbours are infected.
Hence, at least heuristically, one should expect the threshold for majority bootstrap percolation to be close to $1/2$, and it is relatively easy to show that this is indeed true for $d$-regular graphs whose order is small (sub-exponential) in terms of $d$.
Balogh, Bollobás and Morris \cite{BaBoMo2009} showed that, under some mild expansion assumptions, this holds even for much larger graphs.

\begin{theorem}[{\cite[Theorem 2.2]{BaBoMo2009}}, informal]\label{thm:BaBoMogeneral}
  Let $G$ be a regular graph whose order is sufficiently small in terms of the neighbourhood expansion of the graph and consider the random majority bootstrap percolation process.
  Then
  \[
    p_c(G)=\frac{1}{2}+o(1).
  \]
\end{theorem}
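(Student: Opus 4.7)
The plan is to establish matching bounds $p_c(G) \leq 1/2 + o(1)$ and $p_c(G) \geq 1/2 - o(1)$ separately, with the upper bound using expansion and the lower bound using a local stability argument.

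For the upper bound, I would set $p = 1/2 + \eps$ with $\eps \to 0$ as slowly as allowed by the hypothesis. For each vertex $v$, the variable $|N(v) \cap \textbf{A}_p|$ is distributed as $\Bin(d,p)$, so a Chernoff bound gives that $|N(v) \cap \textbf{A}_p| < d/2$ with probability at most $\exp(-c \eps^2 d)$ for some absolute $c > 0$. If $n \exp(-c\eps^2 d) = o(1)$, a union bound implies that every vertex has a majority of infected neighbours and the whole graph is infected after one round. To extend this to larger $n$, I would track the uninfected set $B_i$ through successive rounds and use the neighbourhood expansion hypothesis: for any small $S$, the neighbourhood $N(S) \setminus S$ is large, so most vertices in $B_i$ have the majority of their neighbours outside $B_i$ (hence already infected). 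This yields a recursion $|B_{i+1}| \leq |B_i|/2$ (or better), and after boundedly many rounds $B_i$ becomes empty.

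For the lower bound, I would take $p = 1/2 - \eps$ and consider the \emph{closed uninfected set} $S \subseteq V(G) \setminus \textbf{A}_p$, defined as the largest set such that every $v \in S$ satisfies $|N(v) \cap S| \geq \lceil d/2 \rceil$; equivalently, $V(G) \setminus S$ is exactly the percolated set. Showing that $S \neq \emptyset$ whp suffices. The heuristic is that a typical vertex $v$ has fewer than $d/2$ neighbours in $\textbf{A}_p$ by Chernoff, so the set $U_0$ of vertices $v \notin \textbf{A}_p$ with $|N(v) \cap \textbf{A}_p| < d/2$ contains a constant proportion of the vertices in expectation. Iteratively removing vertices from $U_0$ that fail the closure condition, one arrives at a candidate for $S$, and the task is to show that this refinement stabilises on a non-empty set whp via concentration (first and second moment) arguments on the local neighbourhood statistics.

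The main obstacle is the lower bound: one must rule out a cascade in which the small fraction of vertices initially satisfying the infection rule trigger infections that propagate across the whole graph, even though each individual vertex is only slightly biased towards remaining uninfected. Controlling the iterative refinement of $U_0$ requires showing that each round of removal shrinks the candidate set by only a controlled factor, which is precisely where the hypothesis that $|V(G)|$ is not too large (relative to the expansion) enters: it prevents the accumulated error over the refinement from swallowing the set entirely.
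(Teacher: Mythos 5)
This statement is quoted from Balogh--Bollob\'as--Morris and is not proved in the paper, so your proposal has to stand on its own, and as written it has genuine gaps in both directions. For the upper bound, the step \enquote{for any small $S$, the neighbourhood $N(S)\setminus S$ is large, so most vertices in $B_i$ have the majority of their neighbours outside $B_i$} is false as a deterministic implication: in the hypercube $Q^d$ a subcube of dimension $d/2$ has only $2^{d/2}$ vertices and expands strongly, yet \emph{every} one of its vertices has exactly half of its neighbours inside the set. So vertex expansion of small sets does not yield the halving recursion $|B_{i+1}|\leq |B_i|/2$; to rule out such stable uninfected configurations one needs a probabilistic argument, and in the regime where $|V(G)|$ is exponential in $d$ (the only regime where the theorem is not easy, as the paper notes) the known route is the chain of local estimates of the type in \Cref{l:constant} and \Cref{l:super-exponential}: the probability a fixed vertex is uninfected drops from constant after two rounds to $\exp(-\beta_1 d)$ after five and $\exp(-\beta_2 d^2)$ after eleven, after which a union bound closes the argument. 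Your one-round Chernoff-plus-union-bound only covers $n\exp(-c\eps^2 d)=o(1)$, and the extension beyond that is exactly what is missing.

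For the lower bound the situation is similar: you correctly identify the cascade as the main obstacle, but the proposal does not contain a mechanism to control it. An iterative refinement of $U_0$ with \enquote{first and second moment arguments on local neighbourhood statistics} does not bound the number of refinement rounds nor the per-round loss; the dependencies between rounds are precisely the difficulty, and nothing in the hypothesis is invoked quantitatively to stop the refinement from exhausting the set. The actual argument (in the cited work, and adapted in \Cref{s:proofof0statement} of this paper) replaces the true process by the dominating $\Boot_k(\gamma)$ process with relaxed thresholds in the first $k$ rounds, so that a vertex newly infected at round $\ell+1\leq k$ forces at least $\gamma$ of its neighbours to have been newly infected at round $\ell$; this produces a witness structure whose probability can be bounded sharply enough (cf.\ \Cref{c:existencewitness} and \Cref{l:xinAi+1-Ai}) that a union bound over witnesses and over all vertices of the (exponentially large) graph shows the dominating process stabilises after $O(1)$ rounds without percolating. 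Without a substitute for this domination-plus-witness step, your closed-set construction does not yield $S\neq\emptyset$ whp when $|V(G)|$ is large in terms of $d$.
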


This establishes a `sharp' threshold for a broad class of graphs, but leaves open the question of determining more precisely the critical probability and the width of the critical window.
In fact, in certain cases, it can be shown that the critical probability is in fact bounded away from $1/2$, in the sense that the points $p=1/2$ lies outside the \emph{critical window}.
In particular, in the case of the hypercube, Balogh, Bollob\'as and Morris \cite{BaBoMo2009} established sharp bounds on the critical probability up to the third term.

\begin{theorem}[{\cite[Theorem 2.1]{BaBoMo2009}}]\label{thm:BaBoMohypercube}
  Let $Q^n$ be the $n$-dimensional hypercube and consider the random majority bootstrap percolation process. Let
  \[
    p\coloneqq\frac{1}{2}-\frac{1}{2} \sqrt{\frac{\log n}{n}} + \lambda \frac{\log \log n}{\sqrt{n \log n}}.
  \]
  Then
  \[
    \lim_{n \to \infty} \Phi(p, Q^n) = \left\{
      \begin{array} {c@{\quad \textup{if} \quad}l}
        0 & \lambda \leq -2,       \\[+1ex]
        1 & \lambda > \frac{1}{2}.
      \end{array}\right.
  \]
\end{theorem}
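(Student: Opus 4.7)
The plan is to prove non-percolation for $\lambda \le -2$ and percolation for $\lambda > \tfrac{1}{2}$ separately, with both halves resting on a refined asymptotic expansion of the lower binomial tail $\Pr{\Bin(n,p) \ge n/2}$ at the prescribed $p$. Carrying the Edgeworth expansion to second order gives that this tail is of order $n^{-1/2}(\log n)^{2\lambda - 1/2}$, and retaining the $(\log n)^{2\lambda}$ factor beyond the leading Gaussian term is precisely what makes the threshold constants $\lambda = -2$ and $\lambda = \tfrac{1}{2}$ accessible.

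For the lower bound I look for an \emph{immune} set: $S \subseteq V(Q^n) \setminus \mathbf{A}_p$ in which every vertex of $S$ has strictly more than $n/2$ of its neighbours in $S$. The existence of such an $S$ blocks percolation. I would construct one via iterated safeness: call $v$ $0$-safe if $v \notin \mathbf{A}_p$, and $k$-safe if $v \notin \mathbf{A}_p$ and more than $n/2$ of its neighbours are $(k-1)$-safe. The sets of $k$-safe vertices are nested and their intersection is exactly the set of never-infected vertices; in $Q^n$ this stabilises after $O(\log n)$ rounds. The probability that a fixed vertex is $k$-safe is estimated inductively in $k$ from the binomial-tail expansion above, and when $\lambda \le -2$ it comes out large enough that a second-moment argument over $V(Q^n)$ guarantees a $k$-safe vertex exists w.h.p. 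A key structural input is that two vertices of $Q^n$ share either $0$ or $2$ common neighbours, so pairwise correlations in the safeness events decay rapidly with Hamming distance.

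For the upper bound I argue dually that the set $B$ of never-infected vertices is empty w.h.p. Any such $B$ is itself an immune set, so Harper's edge-isoperimetric inequality on $Q^n$ forces either $B = \emptyset$ or $|B| \ge 2^{n/2}$, since each vertex of $B$ has more than half its edges inside $B$. A round-by-round analysis of the infection process, controlling the density of $A_k$ using the same binomial-tail estimate, shows that after a constant number of rounds the remaining uninfected set has density $o(1)$; the iterative safeness calculation — now in the regime where the fixed-vertex safeness probability drops to $o(2^{-n})$ — can then be combined with a union bound over vertices to rule out any large residual immune set.

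The hardest step is the lower bound's second-moment calculation: tracking the correlations of $k$-safeness between vertices across all Hamming distances and verifying, through $O(\log n)$ layers of the safeness recursion, that the variance matches the squared mean. Getting the precise constants $-2$ and $\tfrac{1}{2}$ further depends on the second-order Edgeworth expansion of the binomial tail: a cruder Chernoff or plain Gaussian approximation would lose the $(\log n)^{2\lambda}$ correction and leave the critical window in $\lambda$ of unbounded width.
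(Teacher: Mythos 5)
Your outline leaves the genuinely hard steps unproved, and in both halves they are the steps the whole argument hinges on. For the $0$-statement, your plan rests on (i) the claim that the true process on $Q^n$ stabilises after $O(\log n)$ rounds, so that ``$k$-safe for $k=O(\log n)$'' implies ``never infected'', and (ii) an inductive estimate of the $k$-safeness probability fed into a second-moment argument. Neither is justified, and (i) is essentially equivalent to the theorem itself: below the threshold one must rule out an infection that keeps creeping forward over an unbounded number of rounds, simultaneously at all $2^n$ vertices. Note also that for fixed $k$ and $\lambda\le -2$ the probability that a given vertex is $k$-safe tends to $1$, so existence of one safe vertex is not the issue; what is needed is a bound on the probability that a vertex becomes \emph{newly} infected at a late round which is $o(2^{-n})$, so that a union bound over the cube shows the process stops before percolating. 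This is exactly what the actual proofs supply via a dominating process ($\Boot_3(\gamma)$ here, with witness sets and cherry/hypergraph counting; see \Cref{l:xinAi+1-Ai}), and the critical constant ($-2$ in \cite{BaBoMo2009}, $-1/4$ here) comes from balancing that deviation probability against the entropy of the choices of witness sets, not from an Edgeworth correction to a single binomial tail. Your recursion also has an unaddressed dependence problem: the safeness events of the $n$ neighbours of $v$ are determined by heavily overlapping radius-$(k-1)$ balls, and the ``$0$ or $2$ common neighbours'' fact does not control these joint distributions; no mechanism is given for propagating tail estimates through even two rounds, let alone $\Theta(\log n)$ of them.

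For the $1$-statement, the Harper-type observation that any closed uninfected set is empty or of size at least $2^{n/2}$ is correct and could in principle relax the required per-vertex bound from $o(2^{-n})$ to $o(2^{-n/2})$ (via Markov on the number of vertices still uninfected after boundedly many rounds). But it does not remove the key difficulty: the binomial-tail computation only gives a \emph{constant} probability of infection after one or two rounds, and getting from there to an exponentially small probability of remaining uninfected after $O(1)$ rounds is the hard part. You simply assert that the ``iterative safeness calculation'' reaches $o(2^{-n})$; in the actual arguments this step needs the product/projection structure of the hypercube (many essentially disjoint subcubes through a vertex, yielding bounds of the form $e^{-\beta n}$ after five rounds and $e^{-\beta n^2}$ after eleven, as in \Cref{l:super-exponential}), which is absent from your plan. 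As written, both directions are plausible outlines whose decisive estimates are exactly the ones left unestablished.
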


The authors note \cite[Section 5]{BaBoMo2009} that the $\lambda \leq -2$ in \Cref{thm:BaBoMohypercube} is not sharp, and sketch how it could be improved to $\lambda < -\frac{3}{4}$ with a little extra care.
However, they note that, at least with their methods, there is a natural barrier at $\lambda = -\frac{1}{4}$.

Extending the work of \cite{BaBoMo2009}, the authors showed that the behaviour of the hypercube is in some sense \emph{universal} for a large class of high-dimensional geometric graphs \cite{CEGK24}.
Roughly, they showed that for a class of graphs $\mathcal{F}$ whose structure is in some sense close to be being defined by a local coordinate system, for a formal definition see \Cref{sec:definitionH}, the order of the second term in the expansion of the critical probability can be bounded above and below as a function of the maximum and minimum degree, respectively.

\begin{theorem}[{\cite[Theorem 1.4]{CEGK24}, informal}]\label{t:mainThm-classH}
  Let $(G_n)_{n \in \mathbb{N}}$ be a sequence of graphs in $\mathcal{F}$ where $\delta(G_n) \to \infty$ as $n \to \infty$ and consider the random majority bootstrap percolation process.
  Then for any constant $\eps > 0$,
  \[
    \lim_{n \to \infty} \Phi(p, G_n) = \left\{
      \begin{array} {c@{\quad \textup{if} \quad}l}
        0 & p < \frac{1}{2}-\left(\frac{1}{2}+\eps\right) \sqrt{\frac{\log \delta(G_n)}{\delta(G_n)}}, \\[+1ex]
        1 & p > \frac{1}{2}-\left(\frac{1}{2}-\eps\right) \sqrt{\frac{\log \Delta(G_n)}{\Delta(G_n)}}.
      \end{array}\right.
  \]
\end{theorem}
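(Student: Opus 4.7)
The plan is to prove the two bounds separately, each relying on the local coordinate structure that defines the class $\mathcal{F}$.

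\emph{Lower bound (non-percolation regime).} For $p$ in the regime below the critical window, I would exhibit a \emph{blocking set}: a set $B \subseteq V(G_n)$ that is disjoint from $\textbf{A}_p$ and such that every $v \in B$ has strictly more than $d(v)/2$ neighbours within $B$. Any such $B$ remains uninfected throughout the process, immediately ruling out percolation. Following the hypercube strategy of \cite{BaBoMo2009}, I would use the coordinate structure of $\mathcal{F}$ to describe a natural family of candidate substructures (analogues of sub-cubes) playing the role of $B$, and compute, via a Gaussian tail estimate for $\Bin(d,p)$ near the mean $d/2$, the probability that a given candidate avoids $\textbf{A}_p$. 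A first-moment calculation --- where the Gaussian tail contributes a factor of order $\delta(G_n)^{-2c^{2}}$ per vertex and $c > \tfrac{1}{2}$ --- identifies the right constant, and a second-moment argument controlling correlations between overlapping candidates upgrades this to existence with high probability.

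\emph{Upper bound (percolation regime).} For $p$ above the critical window, the set $B_0$ of vertices not infected in the first round has expected density at most $\Delta(G_n)^{-1/2 - \Omega(\eps)}$, hence is sparse. I would argue inductively that the set $B_t$ of still-uninfected vertices after $t$ rounds shrinks rapidly. The crucial combinatorial ingredient is that, by the defining properties of $\mathcal{F}$, any sparse subset of $V(G_n)$ is well-spread along the local coordinate directions, so that typical vertices of $B_t$ have a strong majority of neighbours outside $B_t$ and therefore become infected in round $t+1$. Iterating this shrinkage, together with a suitable large-deviation bound ensuring that the initial density of $B_0$ is indeed close to its expectation, will force $B_t$ to empty after finitely many rounds.

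\emph{Main obstacle.} I expect the upper bound to be the hardest step. Converting ``percolation of the complement'' into a purely combinatorial statement about expansion in $\mathcal{F}$, and then verifying it from the abstract hypotheses of the class rather than via concrete algebraic tools (as Harper's isoperimetric inequality provides on $Q^n$), is delicate; it is essentially here that the exponential bound $|V(G_n)| \leq 2^{O(\delta(G_n))}$ from the class definition gets used, since the number of potentially bad local configurations must be controlled by a union bound. A secondary difficulty is the second-moment step in the lower bound, where the dependence between candidate blocking substructures must be controlled by carefully parametrising their intersection via shared coordinates.
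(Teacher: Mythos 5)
There is a genuine gap in both halves of your plan, and in both cases it is at exactly the step you flag as delicate.

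For the $0$-statement, the blocking-set/first-moment strategy cannot work, and it also misattributes the method of \cite{BaBoMo2009}: their hypercube proof is not a static witness construction. The problem is quantitative. A blocking set $B$ must have internal minimum degree exceeding $d(v)/2$ at every $v\in B$, and in the hypercube (and likewise in the graphs of $\mathcal{F}$, by their local coordinate structure) any such subgraph has size at least $2^{n/2}$, i.e.\ exponential in the degree. A \emph{fixed} candidate structure of that size is disjoint from $\textbf{A}_p$ with probability $(1-p)^{|B|}=\exp\left(-\Omega\bigl(2^{n/2}\bigr)\right)$, while any ``natural family of analogues of sub-cubes'' has only $\exp(O(n\log n))$ members, so the first moment of your count of uninfected candidates tends to $0$ rather than to infinity; no second-moment refinement can rescue this. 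The uninfected set that actually survives in the subcritical regime is an unstructured random set, and the proof in \cite{BaBoMo2009} and \cite{CEGK24} (mirrored in \Cref{s:proofof0statement} here for the class $\mathcal{H}$) produces it dynamically: one passes to the dominating $\Boot_k(\gamma)$ process, proves that the probability that a fixed vertex is \emph{newly} infected in round $i+1$ is tiny (the analogue of \Cref{l:xinAi+1-Ai}, via witnesses of recent infection in successive spheres $S_0(x,j)$ and Hoeffding-type deviation bounds), and concludes by a union bound --- this is where the exponential-order hypothesis \ref{c:bound} enters --- that whp the process stabilises after three rounds without having filled the graph.

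For the $1$-statement, the ``crucial combinatorial ingredient'' you invoke is false as a deterministic statement: sparsity of $B_t$ does not imply that its typical vertices have a majority of neighbours outside it. In $Q^n$ a subcube of codimension $k$ has density $2^{-k}$, yet every one of its vertices has $n-k$ of its $n$ neighbours inside it, so no property of $\mathcal{F}$ can yield the well-spreadness you need for an iterated deterministic shrinkage of $B_t$. The actual argument is probabilistic and local: one first shows $\Pr{x\in A_2}\ge 3/4+o(1)$ for each vertex by a binomial estimate near the mean (cf.\ \Cref{l:constant}), and then boosts this, using the projection property \ref{c:projection} to find many almost-disjoint projected copies of graphs in the class through the neighbourhood of $x$, to $\Pr{x\notin A_5}\le\exp(-\beta_1 d(x))$ and $\Pr{x\notin A_{11}}\le\exp(-\beta_2 d(x)^2)$ (cf.\ \Cref{l:super-exponential}), after which a single union bound over $V(G_n)$ (again using \ref{c:bound}) finishes. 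So the place where the order bound is used is the final union bound over vertices failing to be infected after constantly many rounds, not a count of ``bad local configurations'' certifying expansion.
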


In particular, if the graph is $n$-regular, then this determines the second term in the expansion of the critical probability, which agrees with that given by \Cref{thm:BaBoMohypercube}.
Let us note however some shortcomings of \Cref{t:mainThm-classH}.
Firstly, we note that even for regular graphs, \Cref{t:mainThm-classH} is not quite as precise as \Cref{thm:BaBoMohypercube} and gives no bounds on the order of the third term in this expansion.
Furthermore, whilst there are some natural graphs of superexponential order which satisfy the structural properties determining the class $\mathcal{F}$, for technical reasons the order of the graphs in $\mathcal{F}$ are restricted to being exponential in their minimum degree.
Finally, for irregular graphs whose maximum and minimum degrees differ by a factor asymptotically strictly greater than $1$, \Cref{t:mainThm-classH} does not even determine the order of the second term in this expansion.

More recently, Zhu \cite{Zhu25} considered the majority bootstrap percolation process on the \emph{generalised hypercube} $Q_k^n$ on vertex set $\{0,1\}^n$ where two vertices are connected if their Hamming distance is at most $k \in \mathbb{N}$.
Note that this graph is regular with degree $d= \sum_{i=1}^k \binom{n}{i}$, but does not lie in $\mathcal{F}$.
Here, Zhu showed that the critical probability is bounded away from $1/2$, and their results leave open the possibility that $Q_k^n$ exhibits the same universal behaviour as in \Cref{t:mainThm-classH}.

\begin{theorem}[{\cite[Theorem 3]{Zhu25}}]
  For $k \geq 2$ let $Q^n_k$ be the generalised hypercube and consider the random majority bootstrap percolation process.
  Then there is a $\beta<1$ such that
  \[
    \lim_{n \to \infty} \Phi(p, Q^n_k) = \left\{
      \begin{array} {c@{\quad \textup{if} \quad}l}
        0 & p < \frac{1}{2}-\frac{1}{n^{\beta}}, \\[+1ex]
        1 & p > \frac{1}{2}-\frac{1}{n^{k/2}}.
      \end{array}\right.
  \]
\end{theorem}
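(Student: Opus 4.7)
My plan is to handle the two bounds by different methods, both guided by the observation that $d \coloneqq \sum_{i=1}^{k}\binom{n}{i} = (1+o(1))\binom{n}{k} = \Theta(n^k)$, so $\sqrt{d} = \Theta(n^{k/2})$, and the threshold $\tfrac12 - 1/n^{k/2}$ is calibrated precisely so that the mean of $\Bin(d,p)$ differs from $d/2$ by a constant multiple of $\sqrt{d}$.

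\textbf{Upper bound.} For $p > \tfrac12 - 1/n^{k/2}$, a central limit estimate shows that each fixed vertex $v$ is infected in round $1$ with probability at least $q \geq \Phi(-2/\sqrt{k!}) - o(1)$, a strictly positive constant. Hence the density of $A_1$ exceeds $\tfrac12 + \Omega(1)$ w.h.p.\ (by a concentration argument on the function $A_0 \mapsto |A_1|$). To complete percolation in a second round, I would show that for each vertex $v$ the number of its $A_1$-neighbours has mean $d(p + (1-p)q) \geq d/2 + \Omega(d) = d/2 + \Omega(n^k)$ and concentrates tightly around this mean, so that $\Pr{v \notin A_2} \leq \exp(-\Omega(n^k))$; since $k \geq 2$, the union bound over all $2^n$ vertices is then comfortably absorbed, and $A_2 = V(Q_k^n)$ w.h.p.

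\textbf{Lower bound.} For the lower bound I would use a direct first moment argument: for $p < \tfrac12 - 1/n^\beta$ with a suitable $\beta < 1$, show that w.h.p.\ no vertex outside $A_0$ is infected in round $1$, so the process stalls before reaching all of $V(Q_k^n)$. Since $dp$ differs from $d/2$ by $d/n^\beta = \Theta(n^{k-\beta})$ while the standard deviation is $\Theta(n^{k/2})$, Chernoff yields
\[
  \Pr{\Bin(d, p) \geq d/2} \leq \exp\!\bigl(-\Omega(n^{k-2\beta})\bigr),
\]
so the expected number of vertices newly infected in round $1$ is at most $2^n \exp(-\Omega(n^{k-2\beta}))$, which tends to $0$ provided $\beta < (k-1)/2$. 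The quantity $\min\{1, (k-1)/2\}$ is strictly positive for every $k \geq 2$ (giving $\beta$ up to $1/2$ when $k=2$ and up to $1$ when $k \geq 3$), so any such $\beta$ is a valid choice below $1$. By Markov's inequality, $A_1 = A_0$ w.h.p., and since the trivial bound $\Pr{A_0 = V(Q_k^n)} = p^{2^n}$ tends to $0$ very rapidly, the process fails to percolate w.h.p.

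The step I expect to be the main obstacle is the round-$2$ concentration in the upper bound: the indicators $\mathbb{1}[u \in A_1]$ for $u \in N(v)$ depend on overlapping subsets of $A_0$, so a direct Chernoff on $|N(v) \cap A_1|$ is unavailable and a naive McDiarmid-type bound (where a single coordinate of $A_0$ can flip up to $\Theta(n^{k-1})$ of the indicators simultaneously) is too weak. I would circumvent this by observing that for any two distinct $u, u' \in N(v)$ the intersection $N(u) \cap N(u')$ has size $O(n^{k-1}) = o(d)$, so that conditioning on the values of $A_0$ restricted to a polynomial-size "collar" around $v$ makes the round-$1$ statuses of the remaining neighbours of $v$ essentially independent, and a standard Chernoff--Hoeffding bound then applies.
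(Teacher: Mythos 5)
A point of context first: the paper does not prove this statement at all --- it is quoted verbatim from Zhu \cite{Zhu25} as background --- so there is no internal proof to compare against, and I can only assess your sketch on its own terms. Your lower bound is fine as far as the quoted statement goes: for $p<\tfrac12-n^{-\beta}$ with $\beta<\min\{(k-1)/2,1\}$, Chernoff gives $\mathbb{P}\bigl[\Bin(d,p)\ge d/2\bigr]\le\exp\bigl(-\Omega(n^{k-2\beta})\bigr)$, the union bound over $2^n$ vertices shows $A_1=A_0$ whp, and $A_0\neq V$ whp, so the process stalls; since the theorem only asserts the existence of some $\beta<1$, this suffices (though presumably with a worse $\beta$ than Zhu obtains).

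The genuine gap is in the $1$-statement, at exactly the step you flagged, and the proposed repair does not work. For $u\in N(v)$ the round-$1$ status of $u$ is a function of $A_0\cap N(u)\subseteq B(v,2k)$, and for \emph{every} $w\in N(u)$ there are $\Theta(n^{k-1})$ vertices $u'\in N(v)\cap N(w)$, so $w$ lies in some pairwise overlap $N(u)\cap N(u')$. Hence the union of the pairwise overlaps is essentially all of $B(v,2k)$: there is no small ``collar'' whose exposure decouples the statuses, and conditioning on all overlaps leaves no randomness at all. The weaker observation that each single overlap has size $O(n^{k-1})=o(d)$ only gives covariance control (note that for $k\ge 3$ the overlap is even $\gg\sqrt{d}$; it is only its fluctuation $\Theta(n^{(k-1)/2})=o(\sqrt d)$ that keeps correlations small), and second-moment bounds yield failure probabilities that are merely polynomially small --- nowhere near the $\exp(-\omega(n))$ needed to survive a union bound over $2^n$ vertices. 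Indeed it is not even clear that $\mathbb{P}[v\notin A_2]$ is as small as $\exp(-\Omega(n^k))$; this is precisely why the known arguments (in \cite{BaBoMo2009}, \cite{CEGK24}, and in Zhu's paper) do not stop after two rounds but run a multi-round boosting scheme: a constant lower bound on $\mathbb{P}[v\in A_2]$ is amplified to $\exp(-\Omega(d))$ and then $\exp(-\Omega(d^2))$ over further rounds by exhibiting many \emph{disjoint} substructures (projections obtained by fixing coordinates) on which the infection events are genuinely independent, after which the union bound applies. Setting up such a projection argument for $Q^n_k$ is the real content of the $1$-statement --- the paper explicitly notes that $Q^n_k\notin\mathcal{F}$, so the machinery of \cite{CEGK24} is not available off the shelf --- and it is missing from your proposal.
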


\subsection{Our contribution}
Our aim with this paper is to address, to some extent, some of the shortcomings of \Cref{t:mainThm-classH}.

First, whilst the proof of the $1$-statement in \Cref{t:mainThm-classH} can be extended quite easily to graphs $G$ of order $\exp(\text{poly}(\Delta(G))$, and in fact gives the same bound on the third order term as in \Cref{thm:BaBoMohypercube} (see \cite[Remark 4.4]{CEGK24}), the proof of the $0$-statement crucially uses the fact that $|V(G)| \leq \exp(K \delta(G))$ for some constant $K\in \mathbb{N}$ for every $G \in \mathcal{F}$, and this restriction is also implicit in the proof of \Cref{thm:BaBoMohypercube}.

In order to overcome this restriction, we introduce another class $\mathcal{H}$ of high-dimensional graphs (see \Cref{sec:H'} for a formal definition) which satisfy slightly stronger structural assumptions than those in $\mathcal{F}$, but which nevertheless still in some sense capture the idea of a high-dimensional structure coming from a local coordinate system.
These extra structural assumptions are sufficient to improve the bounds on the critical probability in \Cref{t:mainThm-classH} even for graphs of superexponential order, and in particular for $n$-regular graphs, they even strengthen slightly the bounds from \Cref{thm:BaBoMohypercube} on the order of the third term in the expansion of the critical probability.

\begin{theorem}\label{t:mainThm-newH}
  Let $K \in \mathbb{N}$, let $(G_n)_{n \in \mathbb{N}}$ be a sequence of graphs in $\mathcal{H}(K)$ such that $\delta(G_n) \to \infty$ as $n \to \infty$ and consider the random majority bootstrap percolation process.
  Let $f\colon\mathbb{N} \times \mathbb{R} \to \mathbb{R}$ be a function defined as
  \[
    f(\ell, \lambda) \coloneqq \frac{1}{2}-\frac{1}{2} \sqrt{\frac{\log \ell}{\ell}}+ \lambda \frac{ \log \log \ell}{\sqrt{\ell \log \ell}}.
  \]
  Then
  \[
    \lim_{n \to \infty} \Phi(p, G_n) = \left\{
      \begin{array} {c@{\quad \textup{if} \quad}l}
        0 & p < f(\delta(G_n), \lambda) \quad \text{for any } \lambda < -\frac{1}{4}, \\[+1ex]
        1 & p > f(\Delta(G_n), \lambda) \quad \text{for any } \lambda> \frac{1}{2}.
      \end{array}\right.
  \]
\end{theorem}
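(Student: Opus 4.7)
The plan is to handle the $1$- and $0$-statements separately by first-moment arguments on binomial tails, in both cases using the local-coordinate structure of $\mathcal{H}(K)$ to replace a global union bound over $V(G_n)$ --- which fails once $|V(G_n)|$ is superexponential in $\delta(G_n)$ --- by a localised one. For the $1$-statement I would follow the two-round strategy of \cite[Remark~4.4]{CEGK24}: call $v$ \emph{bad} if fewer than $d(v)/2$ of its neighbours lie in $\textbf{A}_p$, so that bad vertices are exactly those left uninfected after round~$1$. For $p > f(\Delta(G_n), \lambda)$ with $\lambda > \frac{1}{2}$, a Gaussian approximation to the binomial gives $\Pr{v \text{ bad}} \leq \Delta(G_n)^{-1/2}(\log \Delta(G_n))^{2\lambda - 1/2}$, so by Markov the bad set has size $o(|V(G_n)|/\Delta(G_n))$ with high probability, independently of $|V(G_n)|$. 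It then remains to argue that so small a set cannot be stable under majority bootstrap percolation: the expansion-type properties implicit in the definition of $\mathcal{H}(K)$ force any small stable set to contain a vertex with a majority of its neighbours outside, hence already infected after round~$1$.

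For the $0$-statement I would look for a single vertex $v$ equipped with a \emph{blocking configuration} in $\textbf{A}_p$ that prevents $v$ from ever being infected. A careful Gaussian tail estimate shows that for $p < f(\delta(G_n), \lambda)$ with $\lambda < -\frac{1}{4}$, the probability of such a configuration at a fixed $v$ is at least of order $\delta(G_n)^{-1}$ times a positive power of $\log \delta(G_n)$. In the less restrictive class $\mathcal{F}$ of \cite{CEGK24}, one simply sums this probability over all vertices, which forces the hypothesis $|V(G_n)| \leq \exp(K\delta(G_n))$. In $\mathcal{H}(K)$, by contrast, the stronger structural assumptions should allow one to select inside $G_n$ a collection of near-disjoint local pieces of size polynomial in $\delta(G_n)$, in each of which the probability of finding a blocker is already bounded below. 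A local first- and second-moment computation across these pieces then produces a blocker with high probability, with an effective union bound that scales with the number of pieces rather than with $|V(G_n)|$.

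The main obstacle is the $0$-statement, and specifically reaching the sharp barrier $\lambda = -\frac{1}{4}$ identified in \cite{BaBoMo2009}. Since this constant arises from balancing the Gaussian exponent of the blocking event against the logarithmic correction in $f$, the blocking configuration must be chosen with essentially no slack: just restrictive enough to keep $v$ uninfected in every round, yet common enough to be found within a single local piece. The structural properties of $\mathcal{H}(K)$ therefore have to serve two purposes in tandem --- furnishing the near-independent local pieces needed for the localised moment calculation, and certifying that the blocker really is stable against the full global dynamics, which in principle could involve vertices far from $v$. Making these two requirements compatible, while preserving the sharp constant in $\lambda$, is what I expect to be the technical heart of the proof.
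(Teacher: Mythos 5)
Both halves of your plan diverge from what is actually needed, and each contains a genuine gap. For the $1$-statement, the step ``so small a set cannot be stable under majority bootstrap percolation'' is not something the structure of $\mathcal{H}(K)$ gives you, and in fact it is false at the scale you reach: in the hypercube $Q^n$ a subcube of dimension $\lceil n/2\rceil+1$ is a stable uninfected set (every vertex has strictly more than half of its neighbours inside it) of size $2^{n/2+1}=o(2^n/n)$, so smallness of the round-one uninfected set alone can never force percolation. Moreover your quantitative claim is off: with $p>f(\Delta,\lambda)$ the per-vertex probability of not being infected in round one is of order $\Delta^{-1/2+o(1)}$, not $o(\Delta^{-1})$, so Markov only gives a bad set of size about $|V|\Delta^{-1/2}$. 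The paper's route is different: it iterates the process locally, using the projection property \ref{c:projection} to recurse, so that $\Pr{x\notin A_2}\le 1/4+o(1)$, $\Pr{x\notin A_5}\le\exp(-\beta_1 d(x))$ and $\Pr{x\notin A_{11}}\le\exp(-\beta_2 d(x)^2)$ (\Cref{l:constant,l:super-exponential}); the final doubly-exponential bound is what makes a plain union bound work against the order bound \ref{c:bound-larger}, with no structural statement about stable sets needed.

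For the $0$-statement the gap is the one you yourself flag but do not resolve, and it is fatal to the ``blocker'' strategy: no configuration inside a bounded ball around $v$ can certify that $v$ is never infected, because if the rest of the graph becomes infected the infection marches inward layer by layer --- by \Cref{c:localconnectionstrong}\ref{i:backwardstrong} a typical vertex in $S(x,\ell)$ has only $O(\ell)$ backward neighbours, so once everything outside a ball is infected each sphere gets infected in turn. Non-percolation must therefore be proved globally, and the paper does this by analysing the dominating $\Boot_3(\gamma)$ process with $\gamma=\sqrt{d/\log d}$: one shows (via \emph{witness} sets $(W_j)$ and an explicit count of \emph{cherries} between $W_i$ and $S_0(x,i+1)$, using \Cref{c:localconnectionstrong}\ref{i:backwardstrong} and \ref{i:cherries}) that $\Pr{x\in\hat A_{i+1}\setminus\hat A_i}\le\exp(-\beta\gamma^i\log\log d)$ for $i\le 3$, so that whp $\hat A_3\neq V(G)$ and $\hat A_4=\hat A_3$, the latter via a union bound whose exponent $\gamma^3\log\log d=\delta^{3/2}\log\log\delta/\log^{3/2}\delta$ is exactly what beats \ref{c:bound-larger}. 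The sharp constant $\lambda=-\tfrac14$ comes from balancing the Hoeffding exponent of the witness event against the number of witnesses with a given cherry density $\alpha$ (\Cref{c:probboundalpha,c:numberchoicesalpha}), not from a first/second-moment computation for a local blocking structure, and your proposed lower bound of order $\delta^{-1}$ for a blocker does not correspond to any quantity in this mechanism. As it stands, the proposal identifies the right obstacles but supplies neither the dominating process nor the counting argument that the proof actually requires.
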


The class $\mathcal{H}$ includes many of the natural families of high-dimensional graphs which lie in $\mathcal{F}$, in particular the hypercube and all Cartesian product graphs with base graphs of bounded size.
In particular, the bound $\lambda < -\frac{1}{4}$ in \Cref{t:mainThm-newH} improves the bound of $\lambda \leq -2$ in \Cref{thm:BaBoMohypercube} to this natural bottleneck.

Furthermore, $\mathcal{H}$ also includes a particular family of high-dimensional graphs of superexponential order arising naturally in the study of polytopes as well as Cayley graphs, the \emph{permutahedron}.

The vertex set of the permutahedron consists of the permutations of $[n+1]$ and two vertices are adjacent if the permutations differ only in a transposition of consecutive elements.
Thus, it can be viewed as the Cayley graph of the symmetric group $S_n$ generated by adjacent transpositions.
Equivalently, the permutahedron can also be viewed as the $1$-skeleton of the polytope which is the convex hull of the points in $\mathbb{R}^{n+1}$ given by the permutations, which can be seen to be a zonotope.
It is also the covering graph of the weak Bruhat lattice.
Even though the permutahedron arises naturally in many areas, it has so far mostly been studied from an algebraic and enumerative perspective and the analysis from a graph-theoretic viewpoint is much less developed \cite{CDE24}.

Like the hypercube, the permutahedron is highly-symmetric, even vertex-transitive, regular and has a very explicit description of its local structure.
But unlike the hypercube it is of superexponential order, which can cause challenges in its analysis.
As a corollary of \Cref{t:mainThm-newH} we obtain the following result for the permutahedron.

\begin{theorem}\label{t:permutahedron}
  Let $(P_n)_{n \in \mathbb{N}}$ be the sequence of $n$-dimensional permutahedra and consider the random majority bootstrap percolation process.
  Let
  \[
    p = \frac{1}{2}-\frac{1}{2} \sqrt{\frac{\log n}{n}}+ \lambda \frac{\log \log n}{\sqrt{n \log n}}.
  \]
  Then
  \[
    \lim_{n \to \infty} \Phi(p, P_n) = \left\{
      \begin{array} {c@{\quad \textup{if} \quad}l}
        0 & \lambda < -\frac{1}{4}, \\[+1ex]
        1 & \lambda > \frac{1}{2}.
      \end{array}\right.
  \]
\end{theorem}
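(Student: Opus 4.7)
The plan is to derive Theorem \ref{t:permutahedron} as a corollary of Theorem \ref{t:mainThm-newH}. Two ingredients are required: first, the $n$-regularity of $P_n$, and second, that $(P_n)_{n \in \mathbb{N}}$ lies in $\mathcal{H}(K)$ for some absolute constant $K$. The regularity is immediate from the Cayley graph description of $P_n$: its vertices are permutations $\pi \in S_{n+1}$, and the neighbours of $\pi$ are the $n$ permutations $\pi \cdot s_i$ obtained by composing with an adjacent transposition $s_i = (i, i+1)$, $i \in [n]$. In particular, $\delta(P_n) = \Delta(P_n) = n$, and substituting this into the conclusion of Theorem \ref{t:mainThm-newH} yields exactly the critical window claimed.

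The main work is therefore the verification that $P_n \in \mathcal{H}(K)$. The Cayley structure equips each vertex with a canonical labelling of its neighbourhood by the generators $s_1, \dots, s_n$, and the Coxeter relations $s_i s_j = s_j s_i$ for $|i-j| \geq 2$ provide a $4$-cycle through each vertex for every such pair of coordinate directions. This furnishes precisely the local product structure that $\mathcal{H}$ is designed to abstract: out of the $\binom{n}{2}$ pairs of coordinate directions at a vertex, all but $n-1$ give a hypercube-like square, while the remaining \emph{braid} pairs $s_i s_{i+1}$ produce only hexagons, which contribute a bounded additive term per direction and so are absorbed into the constant $K$. I would then check the quantitative local conditions in the definition of $\mathcal{H}(K)$ (for example, local expansion properties and bounds on the number of short cycles through a vertex) by direct computation, invoking vertex-transitivity to reduce to a single vertex.

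The main obstacle is not combinatorial subtlety but rather the fact that $|V(P_n)| = (n+1)!$ is superexponential in $n$, which is precisely the regime that lies outside the scope of Theorem \ref{t:mainThm-classH} and motivated the introduction of the class $\mathcal{H}$. Once the axioms of $\mathcal{H}(K)$ have been verified, Theorem \ref{t:mainThm-newH} handles the superexponential order internally, and no additional input from the permutahedron side is required. The heart of the argument is thus a careful but essentially routine verification of the local axioms of $\mathcal{H}$ using the well-understood Coxeter structure of $S_{n+1}$.
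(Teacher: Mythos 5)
Your overall strategy is the same as the paper's: show $P_n \in \mathcal{H}(K)$ for an absolute constant $K$ (the paper gets $K=4$), note $n$-regularity, and invoke \Cref{t:mainThm-newH}. However, there is a concrete gap in your plan to verify the axioms ``for $P_n$'' by a routine local computation: the class $\mathcal{H}(K)$ is defined \emph{recursively} through the projection property \ref{c:projection}, which demands, for every vertex $y \in S(x,\ell)$, a subgraph $G(y)$ containing $y$, disjoint from $B(x,\ell-1)$, with degrees within $K\ell$ of those in $G$, and \emph{itself belonging to} $\mathcal{H}(K)$. For the permutahedron the natural candidates for $G(y)$ are the $1$-skeleta of faces of the polytope, equivalently Cayley graphs of parabolic subgroups generated by subsets of the adjacent transpositions --- and these are \emph{not} permutahedra but Cartesian products of lower-dimensional permutahedra. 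Consequently, membership of $P_n$ in $\mathcal{H}$ cannot be established by an induction that stays within the family $(P_n)$: one must widen the statement and prove, by induction on $n$, that the entire class $\mathcal{P}(n)$ of finite Cartesian products $\car_{i=1}^m P_{n_i}$ with $\sum_i n_i = n$ lies in $\mathcal{H}(4)$. This widening (which parallels the projection lemma used for bond percolation on the permutahedron in the cited literature) is the essential non-routine step of the paper's proof, and your proposal does not address it --- indeed the projection axiom is absent from the list of conditions you say you would check.

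Two smaller points. First, your Coxeter-relation heuristic (commuting generators give squares, braid pairs give hexagons) is the right intuition for \ref{c:localconnectionstrong}, but turning it into a proof requires an explicit choice of the non-typical set $D$: the paper takes $S_0(x,\ell)$ to be those $y$ reachable from $x$ by $\ell$ generators with pairwise disjoint supports, and verifies \ref{c:localconnectionstrong}\ref{i:smallnew}--\ref{i:cherries} relative to that choice (property \ref{i:cherries}, completing each cherry to a $4$-cycle through $S_0(x,\ell-1)$, is exactly where the disjoint-support condition is used). Second, several of the remaining properties (\ref{c:backwards}, parts of \ref{c:localconnectionstrong}, and \ref{c:partitioncond}) are most cleanly obtained not by direct computation but by exploiting the isometric embedding of $P_n$ (and of every graph in $\mathcal{P}(n)$) into a hypercube via inversion sets, since these properties are inherited by isometric subgraphs; and \ref{c:bound-larger} must be checked explicitly, since $(n+1)! = \exp(O(n\log n))$ comfortably satisfies the relaxed order bound even though it violates \ref{c:bound}. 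With the class widened to $\mathcal{P}(n)$ and these verifications carried out, your reduction to \Cref{t:mainThm-newH} is exactly the paper's route.
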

\smallskip

Finally, we provide a first example of an irregular graph where we can improve on the bounds given by \Cref{t:mainThm-newH} and determine the second term in the expansion of the critical probability.
Explicitly, given some $q \in \mathbb{N}$ we consider the Cartesian product of $n$ copies of the star $K_{1, q}$ with $q$ leaves, given by $G_n \coloneqq \car_{i=1}^n K_{1, q}$.
This graph is very far from being regular, indeed the minimum degree of $G_n$ is $\delta(G_n)=n$, while the maximum degree is $\Delta(G_n)=qn$, and it is easy to see that almost all of its vertices have degrees very close to the average degree of $\frac{2q}{q+1}n$.
These irregular product graphs are known to demonstrate pathological behaviour in terms of their phase transitions under bond percolation, see \cite{CDE24,diskin2022Irregular}, and it is not clear if we should expect the critical probability for majority bootstrap percolation on such graphs to be controlled by the minimum, average, or maximum degree.

Whilst the presence of very high degree vertices heuristically seems to make it \emph{harder} to percolate in majority bootstrap percolation, we in fact show that, perhaps surprisingly, the location of the critical window for this graph sequence is controlled by the minimum degree, and so in particular is independent of the number of leaves $q$.

\begin{theorem}\label{t:star}
  Let $q\in \mathbb N$, let $G_n = \car_{i=1}^n K_{1, q}$ and consider the random majority bootstrap percolation process.
  Let
  \[
    p = \frac{1}{2}-\frac{1}{2} \sqrt{\frac{\log n}{n}}+ \lambda \frac{\log \log n}{\sqrt{n \log n}}.
  \]
  Then
  \[
    \lim_{n \to \infty} \Phi(p, G_n) = \left\{
      \begin{array} {c@{\quad \textup{if} \quad}l}
        0 & \lambda < -\frac{1}{4}, \\[+1ex]
        1 & \lambda > \frac{1}{2}.
      \end{array}\right.
  \]
\end{theorem}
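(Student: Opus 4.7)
The proof splits by statement. For the \textbf{0-statement}, note that $G_n = \car_{i=1}^n K_{1,q}$ is a Cartesian product of $n$ copies of the bounded-size graph $K_{1,q}$, so $G_n \in \mathcal{H}(K)$ for some $K = K(q)$. Since $\delta(G_n) = n$, the 0-statement of \Cref{t:mainThm-newH} applied to $G_n$ yields $\Phi(p, G_n) \to 0$ whenever $\lambda < -\tfrac14$ and $p < f(n, \lambda)$.

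For the \textbf{1-statement}, a direct application of \Cref{t:mainThm-newH} only yields percolation at $p > f(qn, \lambda) = f(\Delta(G_n), \lambda)$, strictly stronger than the claimed $p > f(n, \lambda)$. To close this gap, encode each $v \in V(G_n)$ by its \emph{type} $T(v) := \{i \in [n] : v_i \text{ is the center of the $i$-th copy of } K_{1,q}\}$. Vertices of type $T$ form a layer of size $q^{n-|T|}$, all of common degree $n + (q-1)|T|$, with $q$ ``leaf-cluster'' neighbors in each layer $T \setminus \{i\}$ for $i \in T$, and a single ``center-replacement'' neighbor in each layer $T \cup \{j\}$ for $j \notin T$. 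Set $k_0 := \lceil n/(q+1) \rceil - 1$. We prove first a deterministic \emph{cascade lemma}: if at some stage $s$ every vertex with $|T| \le k_0$ lies in $A_s$, then $A_\infty = V(G_n)$. Indeed, for $v$ with $|T(v)| = k+1 \ge k_0 + 1$, all $q(k+1)$ of its leaf-cluster neighbors lie in layer $k$ and are hence infected, and the inequality $2q(k+1) \ge n + (q-1)(k+1)$ reduces to $(q+1)(k+1) \ge n$, which holds by the choice of $k_0$. Iterating upward in $k$ then infects every layer.

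It remains to show that, at $p > f(n, \lambda)$ with $\lambda > \tfrac12$, the random process whp infects every vertex with $|T| \le k_0$; this step is the \textbf{main obstacle}. The maximum degree across the low region is $\tfrac{2q}{q+1} n$, strictly greater than $n$ when $q > 1$, so a black-box application of the 1-statement of \Cref{t:mainThm-newH} to the low region only gives the weaker threshold $p > f(\tfrac{2q}{q+1} n, \lambda)$. Our plan is to revisit the proof of the 1-statement in \cite{CEGK24} and replace its global $\Delta$-based estimates with a more refined, layer-aware argument, exploiting that the maximum degree across the low region is only a constant multiple of the minimum degree and that layer sizes $q^{n-|T|}$ decay sharply in $|T|$; we expect the bottom layer $T = \varnothing$, consisting of $q^n$ vertices of degree exactly $n$, to dictate the critical threshold $f(n,\lambda)$. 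Combining this refined estimate with the cascade lemma will then complete the proof.
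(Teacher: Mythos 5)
Your $0$-statement and your cascade lemma are both correct; the cascade lemma is essentially the final step of the paper's argument (phrased there as: every vertex in a layer above $i_1=\lfloor n/(q+1)\rfloor$ has $|N^-(v)|\ge d(v)/2$, so $L_{[0,i_1]}$ is a percolating set), and you have correctly identified where the real difficulty lies. However, the proposal stops exactly at that difficulty: the claim that at $p>f(n,\lambda)$ the process whp infects every vertex in the first $\approx n/(q+1)$ layers is offered only as a plan, and the plan as described is unlikely to work. For a vertex $v$ at height $i=\Theta(n)$ (with $i\le n/(q+1)$) the degree is $n+(q-1)i$, which for $q\ge 2$ exceeds $n$ by a constant factor, so $f(d(v),\lambda)-f(n,\lambda)=\Theta\bigl(\sqrt{\log n/n}\bigr)$; at $p=f(n,\lambda)$ such a vertex sits strictly \emph{below} its own critical window, and no ``layer-aware'' sharpening of the degree-based analysis of \cite{CEGK24} can infect it from the initial infection alone --- the bottom layer does not dictate the threshold for these vertices in the way you hope. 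What is needed is a different mechanism that exploits the layers already infected below.

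Concretely, the paper proceeds as follows. (i) It applies \Cref{l:super-exponential} --- which is a \emph{vertex-wise} statement, with both the hypothesis on $p$ and the error probability depending on $d(x)$ rather than on $\Delta(G)$, so no revisiting of the proofs in \cite{CEGK24} is required --- only to the layers $L_{[0,i_0]}$ with $i_0\approx n^{5/6}$, where $d(v)=(1+o(1))n$ and the $o(1)$ correction is absorbed by the slack $\lambda>\frac12$; this infects $L_{[0,i_0]}$ within eleven rounds whp. (ii) For the intermediate layers $i_0\le i\le i_1$ it propagates upward using a completely different count: a vertex $v\in L_{i+1}$ has all of its $|N^-(v)|=(i+1)q$ lower neighbours infected (by induction) and roughly $p\,|N^+(v)|\approx\frac12|N^+(v)|$ upper neighbours infected already at time $0$; since $|N^-(v)|+\frac12|N^+(v)|=\frac{d(v)}{2}+\frac{|N^-(v)|}{2}$, the slack $\frac{|N^-(v)|}{2}=\Omega(i_0)$ dominates all error terms, and the exceptional vertices whose upper neighbourhood is atypically uninfected are collected into a set $X$ shown to be locally sparse (\Cref{c:sparse}) and mopped up one round later. (iii) Only then does the cascade above $i_1$, i.e.\ your lemma, take over. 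To complete your proof you would have to supply step (ii) or an equivalent, since neither a black-box nor a refined application of the $1$-statement machinery of \Cref{t:mainThm-newH} to the region $i=\Theta(n)$ can succeed at $p=f(n,\lambda)$.
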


\subsection{Key proof techniques}

In order to prove \Cref{t:mainThm-newH} we use a similar strategy as in \cite{BaBoMo2009,CEGK24}.
While the $1$-statement can directly be obtained from the proofs in there, the challenge lies in the $0$-statement.
In particular, if the graph is of superexponential order, then the union bound in the end of the proof fails.
We overcome this problem by a more careful analysis of the $Boot_k(\gamma)$ process introduced in \cite{BaBoMo2009} and also used in \cite{ CEGK24}.
This process dominates the original majority bootstrap percolation process.
We show that it stabilises after the first few steps and not the whole graph is infected at that point.

The key statement is a bound on the probability that a vertex $x$ is newly infected in the $(k+1)$-st step of the $Boot_k(\gamma)$-process.
This bound needs to be strengthened in comparison to the corresponding statement in \cite[Lemma 5.1]{CEGK24}.
This is achieved by parametrising the proof in terms of the number of cherries between the sphere at distance $k$ and $k+1$ from $x$.
The more cherries there are the worse the probability bound in the proof becomes, but on the other hand there are fewer choices of certain sets that require a union bound later.
Counting the cherries efficiently requires additional structural information that is present in the permutahedron and other high-dimensional graphs which are collected in the class $\mathcal{H}$.
This counting argument together with a careful calculation taking into account lower order terms allows to recover the second order term of the critical probability and strengthens the probability bounds such that a union over larger graphs is possible.

Note that in \cite{BaBoMo2009} the authors use a counting argument that considers hypergraphs and that the cherries considered here are also present in their work implicitly.
Nonetheless, the explicit counting presented here even allows us to improve the bound on $\lambda$ in \Cref{t:mainThm-newH} ($\lambda <-1/4$) in comparison to that in earlier work ($\lambda \leq -2$).

\smallskip

In order to get a matching upper and lower bound for an irregular graph, we rely on the particular structure of this graph.
While the $0$-statement of \Cref{t:star} is already implied by \Cref{t:mainThm-newH} the challenge lies in the $1$-statement.
For this purpose we choose a specific layer representation of the Cartesian product of stars.
By \Cref{t:mainThm-newH} the lowest few layers get infected, and the remaining task is to show that this infection spreads to the rest of the graph.
This crucially uses the fact that high-degree vertices in the Cartesian product of stars have more than half of their neighbours of lower degree.

\subsection{Organisation of the paper}

In \Cref{s:preliminary} we introduce some notation and state the probabilistic tools which we use, as well as recalling the class $\mathcal{F}$ of high-dimensional graphs defined in \cite{CEGK24}.
In \Cref{sec:H'} we define formally the class $\mathcal{H}$, which are our primary objects of study, and prove \Cref{t:mainThm-newH}.
In \Cref{s:permuta} we demonstrate that the permutahedron is indeed contained in $\mathcal{H}$.
Then we turn to the analysis of a specific irregular product graph, namely the Cartesian product of stars and prove \Cref{t:star} in \Cref{s:productstars}.
We close with a discussion of the limits of the methods used and some future directions in \Cref{s:discussion}.

\section{Preliminaries}\label{s:preliminary}

Given a graph $G=(V(G), E(G))$ we will write $\delta(G)$ for the minimum degree of $G$ and $\Delta(G)$ for the maximum degree of $G$.
Given two vertices $x,y \in V(G)$ we will write $\dist_{G}(x,y)$ for the \emph{distance} between $x$ and $y$ in $G$, that is, the length of the shortest $x$-$y$ path in $G$.
Given $k \in \mathbb{N} \cup \{0\}$ and $x \in V(G)$ we let
\[
  B_G(x,k)\coloneqq \{ y \in V(G) \colon \dist_G(x,y) \leq k \}
\]
be the \emph{ball of radius $k$} centred at $x$ and
\[
  S_G(x,k) \coloneqq B_G(x,k) \setminus B_G(x,k-1)
\]
be the \emph{sphere of radius $k$} centred at $x$, where $S_G(x, 0) \coloneqq \{x\}$.
Note in particular that $B_G(x, 0) = \{x\}$, $S_G(x, 1) = N_G(x)$, and $B_G(x, 1) = \{x\} \cup N_G(x)$.
When the underlying graph $G$ is clear from the context, we will omit the subscript in this notation.

We denote the set $\{1, \ldots, n\}$ by $[n]$.
Given a set $X$, the set $\binom{X}{i}$ denotes the set of all subsets of $X$ containing exactly $i$ elements.
Given $y,z \in \mathbb{R}$ we will write $ y \pm z$ to denote the interval $[y-z,y+z]$.
Similar to $O(\cdot)$ notation, an inclusion is meant whenever $\pm$ appears in the context of an equality.
For example, $x = y \pm z$ means that $x \in [y-z, y+z]$.

\subsection{Probabilistic tools}

In this section we state a number of probabilistic tools which are used throughout the paper.
We will assume knowledge of Markov's and Chebyshev's inequalities as standard (see~\cite{Alon2016Book} for basic probabilistic background).
The first statement is a standard form of the Chernoff bounds, see for example~\cite[Appendix A]{Alon2016Book}.
\begin{lemma}\label{l:Chernoff}
  Let $d \in \mathbb{N}$, $0 < p < 1$, and $X \sim \Bin(d,p)$.
  Then
  \begin{enumerate}[$(a)$]
  \item For every $t \geq 0$,
    \[
      \Pr[\big]{|X - dp| \geq t} \; \le \; 2\exp\left( -\frac{2t^2}{d} \right);
    \]
  \item\label{i:bigtail} For every $b \geq 1$,
    \[
      \Pr{X \geq bdp} \leq \left(\frac{e}{b}\right)^{bdp}.
    \]
  \end{enumerate}
\end{lemma}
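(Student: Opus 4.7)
The statement is the standard Chernoff--Hoeffding bound for binomial random variables, which the authors explicitly cite from Alon--Spencer; I expect the ``proof'' in the paper to be a one-line reference to that textbook. Nevertheless, here is how I would derive it from scratch, using the Cram\'er--Chernoff exponential-moment method.

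Write $X=\sum_{i=1}^d X_i$ with $X_i$ i.i.d.\ Bernoulli$(p)$. For any $\lambda\in\mathbb{R}$, Markov's inequality applied to $e^{\lambda X}$ yields the master bound
\[
  \Pr{X \geq a} \;\leq\; e^{-\lambda a}\,\Ex{e^{\lambda X}} \;=\; e^{-\lambda a}\bigl(1-p+p e^{\lambda}\bigr)^{d},
\]
valid for $\lambda\geq 0$, with a symmetric statement for the lower tail when $\lambda\leq 0$. Both parts of the lemma are obtained by choosing $\lambda$ appropriately.

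For part $(a)$, I would invoke Hoeffding's lemma: since each centred variable $X_i-p$ is supported in an interval of length $1$, one has $\Ex{e^{\lambda(X_i-p)}}\leq e^{\lambda^2/8}$, hence $\Ex{e^{\lambda(X-dp)}}\leq e^{d\lambda^2/8}$. Plugging into the master bound with $a=dp+t$ gives $\Pr{X-dp\geq t}\leq \exp(-\lambda t+d\lambda^2/8)$; optimising in $\lambda$ produces $\lambda=4t/d$ and the one-sided bound $\exp(-2t^2/d)$. The two-sided statement follows by symmetry and a union bound, giving the factor $2$.

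For part $(b)$, I would instead use the more refined estimate $1-p+pe^{\lambda}\leq \exp\bigl(p(e^{\lambda}-1)\bigr)$, so that the master bound becomes $\Pr{X\geq a}\leq \exp\bigl(-\lambda a + dp(e^{\lambda}-1)\bigr)$. With $a=bdp$ and the choice $\lambda=\log b$ (legitimate since $b\geq 1$), this simplifies to
\[
  \Pr{X\geq bdp} \;\leq\; \exp\bigl(dp(b-1)-bdp\log b\bigr) \;=\; \left(\frac{e^{b-1}}{b^{b}}\right)^{dp},
\]
and replacing $e^{b-1}$ by the (crude but clean) upper bound $e^{b}$ yields the stated $(e/b)^{bdp}$. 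There is no real obstacle here: the only tiny choices are which convexity estimate to use for the moment generating function in each part, and I expect the paper simply to quote the result.
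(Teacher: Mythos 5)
Your derivation is correct: part $(a)$ is the standard Hoeffding argument (the optimisation $\lambda=4t/d$ indeed yields the exponent $-2t^2/d$, and the factor $2$ comes from the union bound over the two tails), and part $(b)$ follows correctly from the bound $1-p+pe^{\lambda}\leq\exp\bigl(p(e^{\lambda}-1)\bigr)$ with $\lambda=\log b$. As you anticipated, the paper gives no proof at all and simply cites \cite[Appendix A]{Alon2016Book}; your exponential-moment derivation is exactly the standard argument found there, so there is nothing further to compare.
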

However, we will need more precise tail bounds, mainly for the binomial distribution.
Using the central limit theorem, we can derive these from tail bounds for the standard normal distribution $\mathcal{N}(0, 1)$.
Proofs can be found in \cite[Section 5.6]{Asymptopia}.

\begin{lemma}\label{l:CLT}
  For $d \in \mathbb{N}$, $p=p(d)\in (0,1)$ and $f(d) = o(d^{1/6})$, it holds that
  \[
    \Pr*{\frac{\Bin(d, p)-d p}{\sqrt{dp(1-p)}} \geq f(d)} = (1+o(1)) \Pr*{\mathcal{N}(0, 1) \geq f(d)}.
  \]
  Moreover, if $f(d) \to \infty$ as $d \to \infty$, then the probability that the standard normal distribution exceeds $f(d)$ satisfies
  \[
    \Pr{\mathcal{N}(0,1) \geq f(d)} = \frac{1+o(1)}{f(d) \sqrt{2 \pi}} \exp\left(-\frac{f(d)^2}{2}\right).
  \]
\end{lemma}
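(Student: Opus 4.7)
The plan is to handle the two displayed claims separately. For the Gaussian tail estimate, the standard route is a single integration by parts: writing $x e^{-x^2/2} = -\frac{d}{dx} e^{-x^2/2}$,
\[
  \int_{f}^{\infty} e^{-x^2/2} \, dx = \int_f^\infty \frac{1}{x} \cdot x e^{-x^2/2} \, dx = \frac{1}{f} e^{-f^2/2} - \int_f^\infty \frac{1}{x^2} e^{-x^2/2} \, dx.
\]
The remainder integral is bounded above by $f^{-2}$ times the left-hand integral, so it contributes only a multiplicative factor of $1 + O(f^{-2}) = 1+o(1)$ as $f(d) \to \infty$. Dividing through by $\sqrt{2\pi}$ yields the asserted asymptotic for $\Pr{\mathcal{N}(0,1) \geq f(d)}$.

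For the moderate deviation statement for the binomial, the strategy is a local central limit theorem with uniform error control in the moderate-deviations window. Set $\sigma \coloneqq \sqrt{dp(1-p)}$. Applying Stirling's formula to $\binom{d}{k}$ and Taylor-expanding $\log \Pr{\Bin(d,p) = k}$ in $k$ around $k = dp$ to third order shows that, uniformly for integer $k = dp + t\sigma$ with $|t| \leq A f(d)$ and any fixed constant $A > 0$,
\[
  \Pr{\Bin(d,p) = k} = \frac{1+o(1)}{\sigma\sqrt{2\pi}} \exp\left(-\frac{t^2}{2}\right).
\]
The leading error arises from the cubic term in the Taylor expansion and is of order $t^3/\sigma = O(f(d)^3/\sqrt{d})$; the hypothesis $f(d) = o(d^{1/6})$ is calibrated precisely so that this remains $o(1)$ throughout the relevant range, with the quartic and higher-order corrections smaller still.

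Summing the local estimate over integers $k$ with $dp + f(d)\sigma \leq k \leq dp + A f(d)\sigma$ produces a Riemann sum approximating $\tfrac{1}{\sqrt{2\pi}} \int_{f(d)}^{A f(d)} e^{-t^2/2}\,dt$, and by the first part this equals $(1+o(1))\Pr{\mathcal{N}(0,1) \geq f(d)}$ as soon as $A$ is any fixed constant strictly greater than $1$. To handle the far tail $k \geq dp + A f(d)\sigma$, where the local CLT approximation degrades, apply the Chernoff bound in Lemma~\ref{l:Chernoff}(a): this gives a bound of order $\exp\left(-\Theta(A^2 f(d)^2)\right)$, which is negligible compared to the main term $\exp(-f(d)^2/2)$ once $A$ is taken sufficiently large. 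Combining the local limit estimate, the Riemann-sum comparison, and the tail truncation establishes the first displayed equality.

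The main technical obstacle is controlling the local CLT error uniformly across $|t| \leq A f(d)$. The threshold $d^{1/6}$ in the hypothesis is sharp exactly because the cubic coefficient in the Taylor expansion of $\log\binom{d}{k}$ contributes a multiplicative factor $\exp(O(f(d)^3/\sqrt{d}))$ to the PMF, and a less careful expansion would either require a stronger hypothesis on $f(d)$ or yield a multiplicative constant different from $1+o(1)$. Since the result is precisely \cite[Section~5.6]{Asymptopia}, one can alternatively invoke it as a black box.
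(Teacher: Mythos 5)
Your proposal is correct in outline, but note that the paper does not actually prove this lemma: it is stated as a known fact with the proof delegated to the cited reference (Section 5.6 of \emph{Asymptopia}), which is also the ``black box'' option you mention at the end. Your sketch is essentially the standard argument that such a reference carries out: integration by parts for the Gaussian tail, a local limit estimate obtained from Stirling plus a Taylor expansion of the log-pmf with the cubic (third-cumulant) term dictating the $o(d^{1/6})$ threshold, a Riemann-sum comparison over the window $[f(d),Af(d)]$, and a Chernoff truncation of the far tail. So you are not diverging from the paper so much as supplying the details it outsources, which is perfectly legitimate. Two small caveats are worth flagging if you write this out in full. First, your error estimate $O(f(d)^3/\sqrt{d})$ silently identifies $\sigma=\sqrt{dp(1-p)}$ with $\Theta(\sqrt{d})$; the cubic correction is really of order $|1-2p|\,t^3/\sigma$, so the argument (and in fact the lemma as literally stated, e.g.\ for $p=1/d$ and bounded $f$) requires $p$ bounded away from $0$ and $1$, or at least $\sigma\to\infty$ suitably fast --- this is an imprecision inherited from the statement itself, and harmless in the paper's application where $p\to 1/2$, but you should make the assumption explicit. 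Second, your final splicing step invokes the first display (the Gaussian tail asymptotic) to compare $\int_{f}^{Af}$ with $\int_f^\infty$, and the Chernoff truncation to discard $k\ge dp+Af\sigma$; both comparisons give a $(1+o(1))$ factor for \emph{fixed} $A>1$ only when $f(d)\to\infty$. Since the first claim is asserted for all $f(d)=o(d^{1/6})$, including bounded $f$, you should either treat bounded $f$ separately via the plain central limit theorem, or let $A=A(d)\to\infty$ slowly (which the uniform local estimate still permits, as $(Af)^3/\sigma$ can be kept $o(1)$). With these adjustments the proof is complete.
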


Finally, since binomial random variables can be written as sums of independent Bernoulli random variables, Hoeffding's inequality~\cite{Hoeffding1963} readily implies the following lemma.
\begin{lemma}\label{l:sumBin}
  Let $k,d_1,\ldots,d_k \in \mathbb{N}$ and $p \in (0,1)$.
  Let $X_i \sim \Bin(d_i,p)$ for each $i \in [k]$, let $Y = \sum_{i=1}^k iX_i$, and let $D(k) = \sum_{i=1}^k i^2 d_i$.
  Then, for every $\tau > 0$,
  \[
    \Pr[\big]{Y \ge \Ex{Y} + \tau} \le \exp\left( - \frac{2\tau^2}{D(k)} \right) \le \exp\left( - \frac{2p \tau^2}{k\cdot\Ex{Y}} \right).
  \]
\end{lemma}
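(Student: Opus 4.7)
The plan is to realize $Y$ as a sum of independent, bounded random variables and invoke Hoeffding's inequality directly. Concretely, I would write each $X_i = \sum_{j=1}^{d_i} B_{i,j}$, where the $B_{i,j}$ are independent Bernoulli$(p)$ random variables and the families for different $i$ are independent (using the stated independence of the $X_i$). Then
\[
  Y \;=\; \sum_{i=1}^{k}\sum_{j=1}^{d_i} i\cdot B_{i,j}
\]
is a sum of $d_1+\cdots+d_k$ mutually independent random variables, with the summand $i\cdot B_{i,j}$ taking values in the interval $[0,i]$ of length $i$.

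The first inequality then follows immediately from the standard Hoeffding inequality for bounded independent variables: the sum of the squared interval lengths is exactly
\[
  \sum_{i=1}^{k}\sum_{j=1}^{d_i} i^{2} \;=\; \sum_{i=1}^{k} i^{2} d_i \;=\; D(k),
\]
which yields $\Pr{Y \ge \Ex{Y} + \tau} \le \exp\bigl(-2\tau^{2}/D(k)\bigr)$. For the second inequality I would use the elementary bound $i^{2} \le k \cdot i$ valid for all $i \in [k]$, which gives
\[
  D(k) \;=\; \sum_{i=1}^{k} i^{2} d_i \;\le\; k\sum_{i=1}^{k} i\, d_i \;=\; \frac{k}{p}\sum_{i=1}^{k} i\, d_i\, p \;=\; \frac{k\cdot \Ex{Y}}{p}.
\]
Taking reciprocals reverses the inequality inside the exponent and produces the promised bound $\exp\bigl(-2p\tau^{2}/(k\,\Ex{Y})\bigr)$.

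There is essentially no substantial obstacle here: the only thing to verify carefully is that the full collection $\{B_{i,j}\}_{i\in[k],\, j\in[d_i]}$ is mutually independent, which it is by the standard representation of a binomial as a sum of independent Bernoullis, combined with the hypothesised independence of the $X_i$. The rest is a one-line invocation of Hoeffding followed by the trivial estimate $i^{2}\le ki$.
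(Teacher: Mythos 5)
Your proof is correct and follows exactly the route the paper intends: the paper derives this lemma in one line by writing each binomial as a sum of independent Bernoullis and invoking Hoeffding's inequality, with the second bound coming from $i^2 \le k\,i$ and $\Ex{Y} = p\sum_i i\,d_i$, just as you do. No gaps.
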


\subsection{The class $\mathcal{F}$ of high-dimensional geometric graphs from \cite{CEGK24}}\label{sec:definitionH}

We recall the definition of the class $\mathcal{F}=\cup_{K\in \mathbb{N}} \mathcal{F}(K)$ of high-dimensional geometric graphs introduced in \cite{CEGK24}.
Roughly, one can think of these graphs as having a structure which is in some sense \emph{close} to being governed by some \emph{local coordinate system}, where edges are only allowed between vertices which differ in a single coordinate (see \cite[Section 3]{CEGK24} for more details).

Given $K \in \mathbb{N}$, the class $\mathcal{F}(K)$ is defined recursively by taking the single-vertex graph $K_1 \in \mathcal{F}(K)$ and then taking every graph $G$ satisfying \Cref{c:regular,c:backwards,c:localconnection,c:projection,c:partitioncond,c:bound} below.

\begin{property}[Locally almost regular]\makeatletter\edef\@currentlabel{(\theproperty)}\makeatother\label{c:regular}
  For every $x \in V(G)$, $\ell \in \mathbb{N}$ and $y \in S(x, \ell)$, $$|d(x)-d(y)| \leq K \ell.$$
\end{property}

\begin{property}[Bounded backwards expansion]\makeatletter\edef\@currentlabel{(\theproperty)}\makeatother\label{c:backwards}
  For every $x \in V(G)$, $\ell \in \mathbb{N}$ and $y \in S(x, \ell)$,
  \[
    |N(y) \cap B(x, \ell)| \leq K\ell.
  \]
\end{property}

\begin{property}[Typical local structure]\makeatletter\edef\@currentlabel{(\theproperty)}\makeatother\label{c:localconnection}
  For every $x\in V(G)$ there is a set $D \subseteq V(G) \setminus \{x\}$ of \emph{non-typical} vertices such that for every $\ell \in \mathbb{N}$ the following hold:
  \begin{enumerate}[$(i)$]
  \item\label{i:small} $|D\cap S(x,\ell)| \leq  K^{\ell-1} (d(x))^{\ell-1}$;
  \item\label{i:sparse} $|D\cap N(y)| \leq K\ell$ for every vertex $y \in S(x, \ell) \setminus D$;
  \item\label{i:cherry} every two vertices in $S(x, \ell) \setminus D$ have at most one common neighbour in $S(x, \ell+1) \setminus D$.
  \end{enumerate}
\end{property}

If a graph satisfies \Cref{c:localconnection} we denote the set of \emph{typical} vertices at distance $\ell$ from $x$ by $S_0(x, \ell)$, i.e., for $\ell \in \mathbb{N}$ we set
\[
  S_0(x, \ell) \coloneqq S(x, \ell) \setminus D.
\]

\begin{property}[Projection]\makeatletter\edef\@currentlabel{(\theproperty)}\makeatother\label{c:projection}
  For every $x \in V(G)$, $\ell \in \mathbb{N}$ and $y \in S(x, \ell)$, there is a subgraph $G(y)$ of $G$ such that the following hold:
  \begin{enumerate}[$(i)$]
  \item $y \in V(G(y))$;
  \item $G(y) \in \mathcal{F}(K)$;
  \item $V(G(y)) \cap B(x, \ell-1) = \emptyset$; \item $ |d_{G(y)}(w) - d_{G}(w)| \leq K \ell$ for all $w \in V(G(y))$.
  \end{enumerate}
\end{property}

\begin{property}[Separation]\makeatletter\edef\@currentlabel{(\theproperty)}\makeatother\label{c:partitioncond}
  For every $x \in V(G)$, $\ell \in \mathbb{N}$ and $y \in S_0(x, \ell)$, we have
  \[
    |B(y, 2\ell-1) \cap S_0(x, \ell)| \leq \ell K^{\ell-1}d(x)^{\ell-1}.
  \]
\end{property}

\begin{property}[Exponential order]\makeatletter\edef\@currentlabel{(\theproperty)}\makeatother\label{c:bound}
  $G$ has at most $\exp(K \delta(G))$ vertices.
\end{property}

\section{High-dimensional graphs of higher order --- \Cref{t:mainThm-newH}}\label{sec:H'}

In this section we consider a slightly different class $\mathcal{H}$ of high-dimensional graphs.
By strengthening \Cref{c:localconnection}, we will be able to give sharper bounds on the probability that a vertex is infected after $i$ rounds (\Cref{l:xinAi+1-Ai}) which will allow us not only to weaken \Cref{c:bound}, but also to give the more refined bounds on the critical probability in \Cref{t:mainThm-newH}.

More explicitly, we consider the following strengthening of  \Cref{c:localconnection}.

\let\oldproperty\theproperty\renewcommand{\theproperty}{P3$'$}
\begin{property}[Stronger typical local structure]\makeatletter\edef\@currentlabel{(\theproperty)}\makeatother\label{c:localconnectionstrong}
  For every $x\in V(G)$ there is a set $D \subseteq V(G) \setminus \{x\}$ of \emph{non-typical} vertices such that for every $\ell \in \mathbb{N}$ the following hold:
  \begin{enumerate}[$(i)$]
  \item\label{i:smallnew} $|D\cap S(x,\ell)| \leq  K^{\ell-1} (d(x))^{\ell-1}$;
  \item\label{i:sparsenew} $|D\cap N(y)| \leq K\ell$ for every vertex $y \in S(x, \ell) \setminus D$;
  \item\label{i:cherrynew} every two vertices in $S(x, \ell) \setminus D$ have at most one common neighbour in $S(x, \ell+1) \setminus D$;
  \item\label{i:backwardstrong} every vertex $v \in S(x, \ell) \setminus D$ has at most $\ell$ neighbours in $S(x, \ell-1)$;
  \item\label{i:cherries} for every cherry $uvw$ with $u, w \in S(x, \ell) \setminus D$ and $v \in S(x, \ell+1) \setminus D$, it holds that $(S(x, \ell-1) \setminus D) \cap N(u) \cap N(w) \neq \varnothing$.
  \end{enumerate}
\end{property}
\let\theproperty\oldproperty\addtocounter{property}{-1}

\begin{figure}
  \centering
  \begin{tikzpicture}[scale=1, thick, every node/.style={circle}]
    \node (V) at (0,0) [circle,draw, fill, scale=0.6]  {};
    \begin{scope}
      \draw[clip, rotate=90] (0, -3) ellipse (2.5cm and 0.6cm);
      \fill[gray] (2.6,2) -- (2.6,4) -- (3.4,4) -- (3.4,2) -- cycle;
    \end{scope}
    \begin{scope}
      \draw[clip, rotate=90] (0, -4.7) ellipse (3cm and 0.7cm);
      \fill[gray] (4.25,2.3) -- (4.25,5) -- (5.1,5) -- (5.1,2.3) -- cycle;
    \end{scope}

    \node[] at (0,0.4) {$x$};
    \draw[rotate=90] (0, -1.5) ellipse (2cm and 0.5cm);
    \node[] at (1.5,2.4) {$N(x)$};
    \draw[rotate=90] (0, -3) ellipse (2.5cm and 0.6cm);
    \node[] at (3,2.9) {$S(x, 2)$};
    \draw[rotate=90] (0, -4.7) ellipse (3cm and 0.7cm);
    \node[] at (4.7,3.4) {$S(x, 3)$};

    \node (W) at (4.8,0) [circle,draw, fill, scale=0.6] {};
    \node[] at (4.8,0.4) {$v$};
    \node (W2) at (3,-0.9) [circle,draw, fill, scale=0.6] {};
    \node[] at (3,-1.3) {$w$};
    \path[draw=black] (4.8,0) -- (3,-0.9);

    \node (U) at (3,1) [circle,draw, fill, scale=0.6] {};
    \node[] at (3,1.4) {$u$};
    \node (U2) at (1.5,0) [circle,draw, fill=blue, scale=0.6] {};
    \node[] at (1.5,0.4) {$z$};

    \path[draw=blue] (3,1) -- (1.5,0);
    \path[draw=black] (4.8,0) -- (3,1);
    \path[draw=blue] (3,-0.9) -- (1.5,0);
  \end{tikzpicture}
  \caption{The additional property in \Cref{c:localconnectionstrong}\ref{i:cherries} for $\ell=2$ asks that for any cherry $uvw$ there is a common neighbour $z$ of $u$ and $w$ in $N(x)$.}\label{f:cherry}
\end{figure}
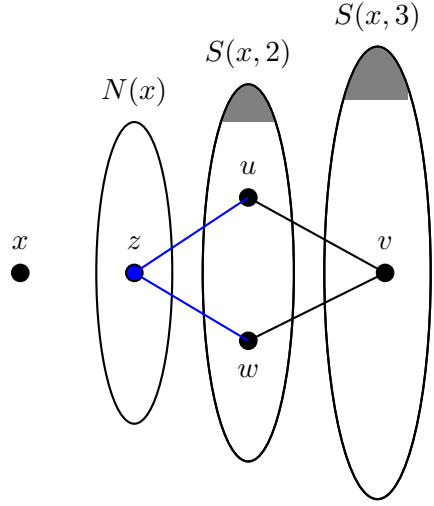

Note that \Cref{c:localconnectionstrong} requires two additional properties \ref{i:backwardstrong} and \ref{i:cherries}, in comparison to \Cref{c:localconnection} (see \Cref{f:cherry} for a pictorial description of property \ref{i:cherries}).
As with \Cref{c:localconnection}, if a graph satisfies \Cref{c:localconnectionstrong} we denote the set of \emph{typical} vertices at distance $\ell$ from $x$ by $S_0(x, \ell)$, i.e., for $\ell \in \mathbb{N}$ we set
\[
  S_0(x, \ell) \coloneqq S(x, \ell) \setminus D.
\]

Under the assumption \Cref{c:localconnectionstrong}, we can relax \Cref{c:bound} as follows.
\let\oldproperty\theproperty\renewcommand{\theproperty}{P6$'$}
\begin{property}[Larger bounded order]\makeatletter\edef\@currentlabel{(\theproperty)}\makeatother\label{c:bound-larger}
  $G$ has at most $\exp\left(\frac{\delta(G)^{3/2} \log \log \delta(G)}{\log^2 \delta(G)}\right)$ vertices.
\end{property}

Note that with the methods presented, the exponent of $3/2$ is best-possible, see \Cref{rem:optimizing}.

We are now ready to define the class $\mathcal{H}=\cup_{K\in \mathbb{N}} \mathcal{H}(K)$.
Given $K \in \mathbb{N}$ the class $\mathcal{H}(K)$ of graphs is defined by taking the single-vertex graph $K_1 \in \mathcal{H}(K)$ as well as every graph $G$ satisfying properties \ref{c:regular}, \ref{c:backwards}, \ref{c:localconnectionstrong}, \ref{c:projection} (with $G(y) \in \mathcal{H}(K)$ instead of $G(y) \in \mathcal{F}(K)$), \ref{c:partitioncond} and \ref{c:bound-larger}.

As with $\mathcal{F}$, one can think of the graphs in $\mathcal{H}$ as having a structure that is close to be governed by some \emph{local coordinate system}.
Let us motivate the extra conditions in \ref{c:localconnectionstrong} from this viewpoint.

Heuristically a \emph{typical} vertex $w \in S_0(x,\ell)$ differs from $x$ in precisely $\ell$ coordinates, and so there should be at most $\ell$ neighbours of $w$ in $S_0(x,\ell-1)$.
On the other hand, two typical vertices in $S_0(x,\ell)$ should have a common neighbour in $S_0(x,\ell+1)$ precisely when they differ from each other in a pair of coordinates in which they both differ from $x$, in which case they should also have a common neighbour in $S_0(x,\ell-1)$.

It is easy to adapt the proof of \cite[Lemma 8.2]{CEGK24} to show that every Cartesian product graph whose base graphs have bounded size lie in $\mathcal{H}$ (\Cref{l:prodinH}), and so in particular, the hypercube, lies in $\mathcal{H}$.
Furthermore, we will later show that the permutahedron is contained in $\mathcal{H}$ (see \Cref{s:permuta}) and thus \Cref{t:mainThm-newH} implies \Cref{t:permutahedron}.

We note, however, that \Cref{c:localconnectionstrong}\ref{i:cherries} does not hold for all graphs in the class $\mathcal{F}$ considered in \cite{CEGK24}.
In particular, for the middle layer graph \Cref{c:localconnectionstrong}\ref{i:cherries} is vacuous for $\ell=1$, and is not true for $\ell \geq 2$, since the graph has girth $6$.

\subsection{The $1$-statement}\label{s:proofof1statement}

For the $1$-statement in \Cref{t:mainThm-newH}, we show that for any $\lambda > \frac{1}{2}$ and
\[
  p \geq \frac{1}{2} - \frac{1}{2} \sqrt{\frac{\log \Delta}{\Delta}} + \lambda \frac{\log \log \Delta}{\sqrt{\Delta \log \Delta}}
\]
the random majority bootstrap percolation process on a graph $G \in \mathcal{H}(K)$ percolates after eleven rounds whp, i.e.,
\[
  \Pr{\exists x \in V(G): x \notin A_{11}} =o(1).
\]
In fact, this essentially already follows from the work in \cite{CEGK24}, see Remark 6.5 there.
\begin{lemma}[Lemma 4.1 and Remark 6.5 in \cite{CEGK24}, informal]\label{l:constant}
  Let $G$ be a graph that satisfies \Cref{c:regular,c:backwards,c:localconnection} for some $K \in \mathbb{N}$.
  Let $x\in V(G)$, $\lambda > \frac{1}{2}$ and
  \[
    p \geq \frac{1}{2} - \frac{1}{2} \sqrt{\frac{\log d(x)}{d(x)}} + \lambda \frac{\log \log d(x)}{\sqrt{d(x) \log d(x)}}.
  \]
  Then
  \[
    \Pr*{x \in A_2} \geq \frac{3}{4} + o(1).
  \]
\end{lemma}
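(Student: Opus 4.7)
The plan is to lower bound $\Pr*{x \in A_2}$ by first exposing whether $x \in A_0$:
\[
  \Pr*{x \in A_2} \geq \Pr*{x \in A_0} + \Pr*{x \notin A_0} \cdot \Pr*{|N(x) \cap A_1| \geq d(x)/2 \given x \notin A_0}.
\]
Since $\Pr*{x \in A_0} = p = 1/2 + o(1)$ by hypothesis, it suffices to show that the conditional probability on the right is at least $1/2 + o(1)$; substituting then gives $\Pr*{x \in A_2} \geq 1/2 + (1/2)(1/2) + o(1) = 3/4 + o(1)$.

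For each $v \in N(x)$, since $\{v \in A_0\}$ is independent of $|N(v) \cap A_0|$, we have $\Pr*{v \in A_1} = p + (1-p) \Pr*{\Bin(d(v), p) \geq d(v)/2}$. Applying \Cref{l:CLT}, a direct computation shows $(d(v)/2 - d(v) p)/\sqrt{d(v) p (1-p)} = (1+o(1))\sqrt{\log d(v)} - 2\lambda \log\log d(v)/\sqrt{\log d(v)}$, and the Gaussian tail estimate then yields $\Pr*{\Bin(d(v), p) \geq d(v)/2} = (1+o(1))(\log d(v))^{2\lambda - 1/2}/\sqrt{2 \pi d(v)}$. For $\lambda > 1/2$ the exponent $2\lambda - 1/2$ exceeds $1/2$, so this tail strictly dominates the deficit $1/2 - p = \Theta(\sqrt{\log d(v)/d(v)})$, giving $\Pr*{v \in A_1} \geq 1/2 + \omega(1/\sqrt{d(v)})$. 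Combined with \Cref{c:regular} to ensure $d(v) = d(x)(1+o(1))$, linearity of expectation then produces $\Ex*{|N(x) \cap A_1| \given x \notin A_0} \geq d(x)/2 + \omega(\sqrt{d(x)})$.

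Finally, a variance bound on $|N(x) \cap A_1|$ will upgrade this mean estimate to a probability bound via Chebyshev's inequality. The indicators $\mathbf{1}[u \in A_1]$ and $\mathbf{1}[v \in A_1]$ are correlated only through the infection status of $N(u) \cap N(v)$; for typical $u, v \in S_0(x, 1)$, \Cref{c:localconnection}\ref{i:cherry} restricts common neighbours in $S_0(x, 2)$ to at most one, \Cref{c:backwards} bounds shared back-neighbours in $B(x, 1)$, and \Cref{c:localconnection}\ref{i:small} limits the contribution of non-typical vertices. Putting these together, the variance of $|N(x) \cap A_1|$ is $O(d(x))$, and since the mean exceeds $d(x)/2$ by $\omega(\sqrt{d(x)})$, Chebyshev yields $\Pr*{|N(x) \cap A_1| \geq d(x)/2 \given x \notin A_0} = 1 - o(1)$, comfortably stronger than the required $1/2 + o(1)$. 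The main obstacle is precisely this variance control: the events $\{v \in A_1\}$ are correlated through multiple mechanisms (back-edges, forward cherries, and non-typical vertices in $D$), and cleanly separating these contributions is what requires the full strength of \Cref{c:regular,c:backwards,c:localconnection}.
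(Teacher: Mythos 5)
Note first that the paper gives no proof of \Cref{l:constant}: the statement is imported from \cite{CEGK24}, and the text explicitly defers to Appendix~A of that work, so there is no in-text argument to compare yours against. Your proposal is the natural normal-approximation-plus-second-moment route, and its quantitative skeleton is correct; indeed it proves more than is asked, since the expected excess of $|N(x)\cap A_1|$ over $d(x)/2$ is $\Theta\bigl(\sqrt{d}\,(\log d)^{2\lambda-1/2}\bigr)$ while the variance is $O(d)$, so Chebyshev gives a conditional probability $1-o(1)$ and hence $\Pr{x \in A_2} \geq 1-o(1) \geq \tfrac34 + o(1)$. Two places still need to be written out. First, once you condition on $x \notin A_0$, the relevant variable for $v \in N(x)$ is $\Bin(d(v)-1,p)$ rather than $\Bin(d(v),p)$; this changes each $\Pr{v \in A_1}$ by at most a binomial point probability $O(d^{-1/2})$, which is lower order than the per-vertex excess $\Theta\bigl((\log d)^{2\lambda-1/2}/\sqrt d\bigr)$, so it is harmless but should be said. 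Second, and more substantively, the claim $\operatorname{Var}\bigl(|N(x)\cap A_1|\bigr)=O(d)$ is asserted rather than derived: bounded codegree alone does not yield it, and the missing ingredient is an anticoncentration step. Concretely, for a typical non-adjacent pair $u,v \in S_0(x,1)$ the shared set $S=(\{u\}\cup N(u))\cap(\{v\}\cup N(v))$ has size $O(K)$ by \Cref{c:backwards} and \Cref{c:localconnection}\ref{i:sparse},\ref{i:cherry}; conditioning on $A_0\cap S$, the two events become independent, and varying $|A_0\cap S|$ shifts the threshold for $u$ by at most $|S|$ units, each unit changing the binomial tail by at most the maximal point probability $O(d^{-1/2})$, so each such pair contributes covariance $O(1/d)$, summing to $O(d)$ over all pairs; the at most $Kd$ adjacent pairs inside $N(x)$ (\Cref{c:backwards}) and the at most one vertex of $D\cap N(x)$ (\Cref{c:localconnection}\ref{i:small}) contribute a further $O(d)$ with the trivial covariance bound. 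With that paragraph added your argument is complete, and (as far as one can tell from the citation) it follows the same strategy as the proof referred to in \cite{CEGK24}.
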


A proof of \Cref{l:constant} can be found in \cite[Appendix A]{CEGK24}.
With this statement at hand, one can follow the proof in the main body of \cite{CEGK24} to show that the probability that a vertex is not infected after $i$ rounds shrinks very quickly.
From a constant probability after two rounds it drops to an exponentially small probability after five rounds and a doubly-exponentially small probability after eleven rounds.

\begin{lemma}[Lemmas 4.2 and 4.3 in \cite{CEGK24}]\label{l:super-exponential}
  Let $G$ be a graph that satisfies \Cref{c:regular,c:backwards,c:localconnection,c:projection,c:partitioncond} for some $K \in \mathbb{N}$.
  Let $x\in V(G)$, $\lambda > \frac{1}{2}$ and
  \[
    p \geq \frac{1}{2} - \frac{1}{2} \sqrt{\frac{\log d(x)}{d(x)}} + \lambda \frac{\log \log d(x)}{\sqrt{d(x) \log d(x)}}.
  \]
  Then there is a $\beta_1>0$ (independent of $x$) such that
  \[
    \Pr[\big]{x \notin A_{5}}< \exp\left(-\beta_1 d(x)\right),
  \]
  and furthermore there is a $\beta_2>0$ (independent of $x$) such that
  \[
    \Pr[\big]{x \notin A_{11}}< \exp\left(-\beta_2 d(x)^2\right).
  \]
\end{lemma}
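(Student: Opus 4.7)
The plan is to follow the strategy of \cite{CEGK24}, bootstrapping \Cref{l:constant} in two stages: first amplifying a constant failure probability into an exponentially small one by appealing to a single recursion step, and then iterating once more to upgrade the exponential bound into a doubly exponential one.

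\emph{Stage 1 (exponential tail after five rounds).} If $x \notin A_5$ then at least $d(x)/2$ neighbours $y\in N(x)$ satisfy $y \notin A_4$, and a fortiori $y \notin A_2$. For each such $y$, the projection property \ref{c:projection} applied with $\ell=1$ supplies a subgraph $G(y) \in \mathcal{H}(K)$ with $y \in V(G(y))$, $x \notin V(G(y))$, and $d_{G(y)}(y) = d(x) \pm O(K)$ by \ref{c:regular}. Since bootstrap percolation is monotone under passing to subgraphs, $y \in A_2(G(y))$ implies $y \in A_2(G) \subseteq A_4(G)$, and \Cref{l:constant} applied inside $G(y)$ yields $\Pr{y \notin A_2(G(y))} \leq \tfrac14 + o(1)$. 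To decouple across neighbours, I would use properties \ref{c:backwards}, \ref{c:localconnection} and the separation property \ref{c:partitioncond} to extract a set $Y \subseteq N(x)$ of size $(1-o(1))d(x)$ whose projected subgraphs $\{G(y)\}_{y \in Y}$ are pairwise disjoint, by discarding a sparse collection of neighbours whose local balls overlap. Then the events $\{y \in A_2(G(y))\}_{y \in Y}$ are mutually independent, each of probability $\ge \tfrac34 + o(1)$. The number of $y \in Y$ that fail is stochastically dominated by $\Bin(|Y|, \tfrac14 + o(1))$, whose mean is bounded away from $d(x)/2$, so Chernoff (\Cref{l:Chernoff}) yields $\Pr{x \notin A_5} \leq \exp(-\beta_1 d(x))$ for some $\beta_1>0$.

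\emph{Stage 2 (doubly exponential tail after eleven rounds).} Repeat the recursion, but using the Stage 1 bound in the role of the base case. For each $y \in N(x)$ pick $G(y) \in \mathcal{H}(K)$ as above; Stage 1 applied inside $G(y)$ gives $\Pr{y \notin A_5(G(y))} \leq \exp(-\beta_1 d_{G(y)}(y)) \leq \exp(-\beta_1 d(x)/2)$ for $d(x)$ large, hence $\Pr{y \notin A_{10}(G)} \leq \exp(-\beta_1 d(x)/2)$. Extracting the disjoint set $Y$ as before makes these events independent on $Y$. Since $x \notin A_{11}$ forces at least $d(x)/2 - o(d(x))$ indices $y \in Y$ to fail, a naive union bound over the identity of the failing half suffices:
\begin{align*}
  \Pr{x \notin A_{11}} &\leq \binom{|Y|}{\lfloor d(x)/2 \rfloor} \exp\!\left(-\beta_1 d(x)/2 \cdot d(x)/2\right) \\
  &\leq 2^{d(x)} \exp\!\left(-\beta_1 d(x)^2/4\right) \leq \exp\!\left(-\beta_2 d(x)^2\right)
\end{align*}
for any $\beta_2 < \beta_1/4$ and $d(x)$ sufficiently large. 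The factor of eleven arises because two rounds are absorbed by the base case \Cref{l:constant}, a further three accommodate the first recursion (since $A_2 \subseteq A_4$ and one extra round is spent in the majority step at $x$), and six more are needed for the second recursion, applied at scale $A_5 \subseteq A_{10}$ with the analogous final step.

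The main obstacle in both stages is the decoupling argument: selecting the large independent subset $Y \subseteq N(x)$ on which the relevant events factorise. This is precisely what the combination of \ref{c:backwards}, \ref{c:localconnection}\ref{i:sparse}, and the separation property \ref{c:partitioncond} is designed to allow, bounding the number of neighbours one has to discard because their projected neighbourhoods overlap or because they are themselves atypical. Once the independence is in hand, everything reduces to an application of Chernoff in Stage 1 and a simple subset union bound in Stage 2.
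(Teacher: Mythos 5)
The paper itself does not reprove this lemma (it is imported verbatim from \cite{CEGK24}), and your two-stage amplification --- constant failure probability from \Cref{l:constant}, then exponential, then doubly exponential, each stage passing to the projections supplied by \ref{c:projection} --- is the right overall shape of the cited argument. However, two of your central steps fail as stated. First, majority bootstrap percolation is \emph{not} monotone under passing to subgraphs: inside $G(y)$ a vertex $w$ only needs $d_{G(y)}(w)/2$ infected neighbours, which can be smaller than the $d_G(w)/2$ required in $G$ by up to $K\ell/2$, so $y \in A_2(G(y))$ does not imply $y \in A_2(G)$. To make the projection step legitimate one must run a \emph{buffered} process inside $G(y)$ (threshold $d_G(w)/2$, i.e.\ $d_{G(y)}(w)/2 + O(K)$, counting only neighbours inside $G(y)$) and check that the base estimate of \Cref{l:constant} tolerates this additive shift --- it does, because the slack $\lambda > \tfrac12$ corresponds to a margin of order $\log\log d/\sqrt{d \log d} \gg K/d$ in $p$ --- but this is precisely the point the strengthened statements in \cite{CEGK24} are designed to handle, and your appeal to monotonicity skips it. (Also, under the hypotheses of this lemma the projections satisfy \ref{c:projection} with $G(y) \in \mathcal{F}(K)$, not $\mathcal{H}(K)$.)

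The more serious gap is the decoupling. You cannot extract $(1-o(1))d(x)$ neighbours of $x$ with pairwise disjoint projections: since $d_{G(y)}(y) \ge d(x) - O(K)$, each $G(y)$ must contain all but $O(K)$ of $N(y)$, and distinct neighbours $y,y'$ of $x$ typically have common neighbours other than $x$ (in the hypercube, $x\oplus e_i$ and $x\oplus e_j$ share $x \oplus e_i \oplus e_j$), so a short counting argument shows that at most $O(K)$ neighbours of $x$ can have pairwise disjoint projections; likewise the radius-two balls $B(y,2)$, on which your events $\{y \in A_2(G(y))\}$ depend, pairwise intersect in sets of size $\Theta(d)$ because any two neighbours of $x$ are within distance two of each other. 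Moreover the failure events $\{y \notin A_4\}$ are decreasing in the initial configuration and hence positively correlated (Harris), so the product-type upper bound you want points in the wrong direction and cannot be recovered by discarding a sparse set of neighbours. This is exactly why the cited proofs need five and eleven rounds rather than three and seven: failure of $x$ is pushed out to a large family of \emph{typical vertices at distance two}, each of which must fail a buffered two-round event inside a projection avoiding $B(x,1)$, and the separation property \ref{c:partitioncond} is then used to select a large subfamily whose witness regions genuinely are pairwise disjoint, with the entropy cost of the accompanying union bounds beaten by the per-vertex estimates. Your Stage 2 inherits both problems (and its bookkeeping ``$y \notin A_5(G(y))$ hence $y \notin A_{10}(G)$'' again uses the false monotonicity), so as written neither the exponential nor the doubly exponential bound is established.
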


Since, by \Cref{c:bound-larger}, any graph in $G \in \mathcal{H}(K)$ contains $\exp\left( o\left( \delta(G)^2 \right) \right)$ vertices, the $1$-statement of \Cref{t:mainThm-newH} follows immediately from \Cref{l:super-exponential} via a union bound argument.

\subsection{The $0$-statement}\label{s:proofof0statement}

For the $0$-statement of \Cref{t:mainThm-newH}, we show that for any $\lambda < -1/4$ and
\[
  p < \frac{1}{2}-\frac{1}{2} \sqrt{\frac{\log \dmin}{\dmin}} - \lambda \frac{\log \log \dmin}{\sqrt{\dmin \log \dmin}},
\]
the random majority bootstrap process on $G \in \mathcal{H}(K)$ whp does not percolate, i.e., $$ \Phi(p,G)\coloneqq \Pr{\textbf{A}_p \text{ percolates on } G} \to 0.$$

Instead of directly analysing the majority bootstrap percolation process, we will analyse a generalised process which dominates the original majority bootstrap percolation process.
The \emph{$\Boot_k(\gamma)$ process} was introduced in~\cite{BaBoMo2009} and we recall its definition: Given $k \in \mathbb{N}$ and some function $\gamma \colon V(G) \to \mathbb{R}^+$, we recursively define $\hat{A}_0 \coloneqq A_0$ and, for each $\ell \in \mathbb{N} \cup \{0\}$,
\[
  \hat{A}_{\ell+1} \coloneqq \hat{A}_\ell \cup \left\{ x \in V(G) : \left|N(x) \cap \hat{A}_\ell \right| \geq \frac{d(x)}{2} - \max\{0, k - \ell\}\cdot \gamma(x) \right\}.
\]
In other words, the initial infection set is the same as in the majority bootstrap percolation process, but the infection spreads more easily in the first $k$ rounds.
More precisely, a vertex $x$ is infected in the first round if it has $d(x)/2 - k\cdot\gamma(x)$ infected neighbours, and this requirement is gradually strengthened over the first $k$ rounds.
After the $k$th round, the process evolves exactly as the majority bootstrap percolation process would do.

It is clear that this process dominates the majority bootstrap percolation process in that, for any fixed $A_0$, we have that $A_i\subseteq \hat{A}_i$ for all $i \in \mathbb{N}$.
Crucially, however, if a vertex $x$ becomes infected in round $\ell+1 \leq k$ of the $\Boot_k(\gamma)$-process, then at least $\gamma(x)$ of its neighbours must have become infected in round $\ell$.
This simplifies the task of showing a vertex does \emph{not} become infected.

For our application, we fix $k=3$ and
\[
  \gamma(x)= \sqrt{\frac{d(x)}{\log d(x)}},
\]
which matches the choice in \cite{BaBoMo2009}.
In \cite{CEGK24} the authors of this work presented a simpler argument, using a slightly larger value of $\gamma(x)$ and with $k=2$, which allowed them to avoid a technical counting argument from \cite{BaBoMo2009}, at the cost of slightly weakening the bounds on the critical probability.
Here, to recover and strengthen the bounds from \cite{BaBoMo2009}, we have to be more careful.

For their analysis, Balogh, Bollob\'{a}s and Morris developed a counting lemma (\cite[Lemma 5.4]{BaBoMo2009}) for a hypergraph that encodes certain cherries in the hypercube.
In order to improve their bounds on $\lambda$, we instead do the cherry counting directly and in broader generality.

The main step towards the proof of the $0$-statement is to show that in the $\Boot_k(\gamma)$-process the probability that a vertex becomes infected after the first step is shrinking quickly.
We will be able to leverage this information to show that whp no new vertices are infected after the third round and to deduce that whp the process does not percolate.
\begin{lemma}\label{l:xinAi+1-Ai}
  Let $i \in \{2, 3\}$, $x \in V(G)$, $d=d(x)$, $\gamma=\gamma(x)$, $\lambda < -1/4$ and $p = \frac{1}{2}-\frac{1}{2} \sqrt{\frac{\log d}{d}} + \lambda \frac{\log \log d}{\sqrt{d \log d}}$.
  If there is a $K \in \mathbb{N}$ such that $G \in \mathcal{H}(K)$, then there is a $\beta >0$ (independent of $x$) such that
  \[
    \Pr{x \in \hat{A}_{i+1} \setminus \hat{A}_i} \leq \exp\left(-\beta \gamma^i \log \log d\right).
  \]
\end{lemma}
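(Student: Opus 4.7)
The approach is to refine the witness-tree argument underlying \cite[Lemma 5.1]{CEGK24} by parametrising both the probability bound and the configuration enumeration by the number of cherries at the innermost level, exploiting the new structural properties \Cref{c:localconnectionstrong}\ref{i:backwardstrong} and \Cref{c:localconnectionstrong}\ref{i:cherries} to count cherries efficiently. For $i \in \{2,3\}$, unrolling the definition of the $\Boot_3(\gamma)$-process shows that the event $\{x \in \hat A_{i+1}\setminus \hat A_i\}$ forces the existence of a rooted witness tree of depth $i$: disjoint sets $W_i, \ldots, W_1$ with $W_\ell \subseteq S(x, i+1-\ell)$ in the typical case, where $W_i \subseteq N(x)$ has $|W_i| \geq \gamma$, each $y \in W_\ell$ with $\ell > 1$ has at least $\gamma(y)$ neighbours in $W_{\ell-1}$, and each $z \in W_1$ satisfies $|N(z) \cap A_0| \geq d(z)/2 - 3\gamma(z)$. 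In particular $|W_1| \gtrsim \gamma^i$, and \Cref{c:localconnectionstrong}\ref{i:smallnew} allows us to restrict each $W_\ell$ to its typical part at negligible cost.

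For a fixed typical witness tree $W$, I would condition on $A_0 \cap W$ and analyse the probability that every $z \in W_1$ has enough $A_0$-neighbours in $V(G) \setminus W$. The correlation structure of these conditional events is captured by the multiplicities $c(w) := |\{z \in W_1 : w \in N(z) \setminus W\}|$ for $w \notin W$; defining the cherry count $\mathfrak{c}(W) := \sum_{w} \binom{c(w)}{2}$, the plan is: (i) reduce the joint event to a tail estimate on the weighted sum $\sum_{w \in A_0 \setminus W} c(w)$ and apply \Cref{l:CLT} together with \Cref{l:sumBin} to obtain a probability bound whose leading-order $\tfrac{1}{2}|W_1|\log d$ exponent will cancel exactly against a matching leading term from the configuration count, leaving only a subleading $\log\log d$ exponent that carries the $\lambda$-dependence; (ii) bound the number of typical witness trees with given $\mathfrak{c}$ using \Cref{c:backwards}, \Cref{c:partitioncond} and, crucially, \Cref{c:localconnectionstrong}\ref{i:cherrynew}, \Cref{c:localconnectionstrong}\ref{i:backwardstrong} and \Cref{c:localconnectionstrong}\ref{i:cherries}, which together imply that coincidences in the neighbourhoods of $W_1$-vertices force coincidences in $W_2$ and hence reduce the number of admissible configurations; and (iii) sum over $\mathfrak{c}$ to obtain the target $\exp(-\beta\gamma^i\log\log d)$.

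The main obstacle lies in step (iii): the probability bound and the enumeration count move in opposite directions as $\mathfrak{c}$ varies, and both must be tracked to subleading order in order to recover the extra $\log\log d$ factor absent from the bound $\exp(-\beta\gamma^i)$ in \cite[Lemma 5.1]{CEGK24}. This precision is exactly what drives the improvement of the critical-window constant from $\lambda \leq -2$ in \cite{BaBoMo2009} to $\lambda < -\tfrac{1}{4}$ --- the value $-\tfrac14$ being the natural barrier, since the leading-order cancellation leaves an exponent proportional to $\tfrac12 + 2\lambda$ --- and, together with the faster decay, is what permits the weakening of \Cref{c:bound} to \Cref{c:bound-larger}. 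The decisive structural input is \Cref{c:localconnectionstrong}\ref{i:cherries}: every forward cherry $uvw$ with $u,w\in W_1$ and $v \in S_0(x,i+1)$ forces a shared back-neighbour in $S_0(x,i-1)$ and hence structure on $W_2$; combined with the back-degree bound \Cref{c:localconnectionstrong}\ref{i:backwardstrong}, this turns the implicit hypergraph counting of \cite[Lemma 5.4]{BaBoMo2009} into an explicit cherry-indexed enumeration that remains effective for graphs of superexponential order.
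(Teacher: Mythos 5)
Your proposal takes essentially the same route as the paper's proof: a depth-$i$ witness whose outer layer has size $\Theta(\gamma^i)$, a Hoeffding-type tail bound (\Cref{l:sumBin}) parametrised by the number of cherries between that layer and $S_0(x,i+1)$, an enumeration of witnesses with a given cherry count driven by \Cref{c:localconnectionstrong}\ref{i:cherries}, \ref{i:cherrynew} and \ref{i:backwardstrong} (the paper implements the ``coincidences'' step via a small set $H \subseteq S_0(x,i-1)$ of back-vertices with many neighbours in the outer layer, rather than through the witness layer $W_2$ itself), and a final optimisation over the cherry density in which the leading $\tfrac{s}{2}\log d$ terms cancel and the surviving exponent is proportional to $\bigl(\tfrac12+2\lambda\bigr)s\log\log d$, exactly as in the paper. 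The only cosmetic differences are your reversed indexing of the witness layers, the unnecessary appeal to \Cref{l:CLT}, and conditioning on $A_0\cap W$ where the paper simply works with edges into $S_0(x,i+1)\cap \hat{A}_0$; none of these changes the argument.
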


In order to prove \Cref{l:xinAi+1-Ai} we will introduce the notion of a \emph{witness}, a structure witnessing that a vertex $x$ is infected in the second or third round, and prove several claims that lead to the proof of \Cref{l:xinAi+1-Ai} in \Cref{proof_xinAi+1-Ai}.
Below we let $x \in V(G)$ be given and write $d=d(x), \gamma=\gamma(x)$.

First, observe that if $w \in V(G)$ is such that $\dist(x, w)$ is constant, then by \Cref{c:regular} and the asymptotic estimates $\log (d + O(1))/\log d = 1 + O(1/d)$ and $\sqrt{1 + O(1/d)} = 1 + O(1/d)$,
\begin{equation}\label{e:gammaconstant}
  \gamma(w) = \gamma \cdot \left(1 + O\left(\frac{1}{d}\right)\right) = \gamma + o(1).
\end{equation}

In the following, fix $1 \leq i \leq 3$.
In order to prove that the vertex $x$ has low probability to become infected in step $i+1$, we define a \emph{witness} to be a tuple of $i$ sets $(W_j)_{1 \leq j \leq i}$ such that
\begin{equation}\label{e:witness-def}
  W_j \subseteq S_0(x, j), \qquad W_{j+1} \subseteq N(W_j) \qquad \text{ and } \qquad |W_j|= \frac{(\gamma - 7K)^j}{j!}.
\end{equation}
The \emph{weight} of a witness $(W_j)_{1 \leq j \leq i}$ is defined as
\[
  \zeta(W_i) \coloneqq e(W_i, S_0(x, i+1)).
\]
We observe that, for every witness $(W_j)_{1 \leq j \leq i}$, we have
\begin{equation}\label{e:witnessobs}
  \zeta(W_i) = |W_i|(d\pm 3iK),
\end{equation}
since every vertex of $W_i$ has degree $d\pm iK$ by \Cref{c:regular} and every vertex of $S_0(x,i)$ has at most $2iK$ neighbours outside $S_0(x,i+1)$ by Properties \ref{c:backwards} and \ref{c:localconnectionstrong}\ref{i:sparsenew}.
For an illustration of a witness see \Cref{f:witnessfori3}.

Furthermore, given the random set $\hat{A}_0$ we consider the random variable
\[
  Z(W_i) \coloneqq e(W_i, S_0(x, i+1) \cap \hat{A}_0).
\]
The definition of a witness is motivated by the following claim.

\begin{claim}\label{c:existencewitness}
  Let $i \in \{1,2,3\}$.
  If $x \in \hat{A}^{(i+1)} \setminus \hat{A}^{(i)}$, then there exists a witness $(W_j)_{1 \leq j \leq i}$ with $s=|W_i|$ such that
  \begin{equation}\label{e:0-statement-dev}
    Z(W_i) \geq \Ex{Z(W_i)} + s\left(\frac{1}{2} \sqrt{d \log d} - \lambda \log \log d\sqrt{\frac{d}{\log d}}-3 \gamma - 9iK\right).
  \end{equation}
\end{claim}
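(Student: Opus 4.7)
The plan is to build the witness iteratively, ensuring at each stage $j\in\{1,\ldots,i\}$ that
\[
  W_j\subseteq S_0(x,j)\cap\bigl(\hat{A}_{i+1-j}\setminus\hat{A}_{i-j}\bigr),
\]
so that each $W_j$ consists of \emph{newly} infected typical vertices at round $i+1-j$, in addition to meeting the size and containment requirements of \eqref{e:witness-def}. Once $W_i$ is produced, every $w\in W_i$ lies in $\hat{A}_1\setminus\hat{A}_0$ and therefore satisfies the round-$1$ infection inequality with respect to $\hat{A}_0$; substituting this into $Z(W_i)$ and comparing with the expectation via \eqref{e:witnessobs} will then yield \eqref{e:0-statement-dev}.

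For the base step $j=1$, the hypothesis $x\in\hat{A}_{i+1}\setminus\hat{A}_i$ together with the defining threshold inequality at round $i+1$ and the negation of the one at round $i$ gives $|N(x)\cap\hat{A}_i|-|N(x)\cap\hat{A}_{i-1}|>\gamma$. After discarding the at most one non-typical neighbour of $x$ (by \Cref{c:localconnectionstrong}\ref{i:smallnew} with $\ell=1$), I select $W_1\subseteq S_0(x,1)\cap(\hat{A}_i\setminus\hat{A}_{i-1})$ of the required size. For the inductive step, fix $y\in W_j$. Since $y$ is at constant distance $j\le 3$ from $x$, \Cref{c:regular} gives $d(y)=d\pm Kj$ and \eqref{e:gammaconstant} gives $\gamma(y)=\gamma+o(1)$; the same subtraction argument as in the base step then shows that $y$ has more than $\gamma-Kj-o(1)$ neighbours in $\hat{A}_{i-j}\setminus\hat{A}_{i-j-1}$. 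By \Cref{c:backwards} at most $Kj$ of these lie in $B(x,j)$, and by \Cref{c:localconnectionstrong}\ref{i:sparsenew} at most $Kj$ are non-typical, leaving at least $\gamma-3Kj-o(1)$ newly infected neighbours in $S_0(x,j+1)$. Summing over $W_j$ and using that each vertex of $S_0(x,j+1)$ has at most $j+1$ neighbours in $S(x,j)$ by \Cref{c:localconnectionstrong}\ref{i:backwardstrong}, the number of \emph{distinct} candidates for $W_{j+1}$ is at least
\[
  \frac{|W_j|\bigl(\gamma-3Kj-o(1)\bigr)}{j+1}\;\ge\;\frac{(\gamma-7K)^{j+1}}{(j+1)!},
\]
where the last inequality uses $3Kj+o(1)\le 7K$, valid for $j\le i-1\le 2$ and $K\ge 1$. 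From these I pick $W_{j+1}$ of the prescribed size, completing the induction.

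To conclude, each $w\in W_i$ lies in $\hat{A}_1\setminus\hat{A}_0$, so $|N(w)\cap\hat{A}_0|\ge d(w)/2-k\gamma(w)\ge d/2-3\gamma-O(Ki)$. Since at most $2Ki$ neighbours of $w$ lie outside $S_0(x,i+1)$, I obtain $e(\{w\},S_0(x,i+1)\cap\hat{A}_0)\ge d/2-3\gamma-O(Ki)$, and summing over $W_i$ gives $Z(W_i)\ge s\bigl(d/2-3\gamma-O(Ki)\bigr)$. Combining with $\Ex{Z(W_i)}=p\,\zeta(W_i)\le ps(d+3Ki)$ from \eqref{e:witnessobs}, subtracting, and expanding
\[
  d\bigl(\tfrac{1}{2}-p\bigr)=\tfrac{1}{2}\sqrt{d\log d}-\lambda\log\log d\,\sqrt{\tfrac{d}{\log d}}
\]
yields \eqref{e:0-statement-dev}, with the hidden constants absorbed into the $9iK$ term.

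The hard part will be the inductive step: the losses from degree fluctuation across $S(x,j)$, from neighbours of $y$ inside $B(x,j)$, from non-typical neighbours, and crucially from the $(j+1)$-fold overcount of candidates by predecessors in $W_j$, must together be dominated by $7K$ so as to match the prescribed ratio $|W_{j+1}|/|W_j|=(\gamma-7K)/(j+1)$. The constants $7K$ in \eqref{e:witness-def} and $9iK$ in \eqref{e:0-statement-dev} are calibrated precisely so that this arithmetic closes for $i\le 3$; for $j\geq 3$ the estimate $3Kj\le 7K$ would fail, which is the structural reason the recursion depth must stay at $j\le 2$ (and hence the lemma is restricted to $i\le 3$).
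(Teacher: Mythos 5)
Your proposal is correct and follows essentially the same route as the paper's proof: the same inductive construction of $W_j \subseteq S_0(x,j)\cap(\hat{A}_{i+1-j}\setminus\hat{A}_{i-j})$ via the threshold-difference argument, the losses controlled by \Cref{c:regular}, \Cref{c:backwards} and \Cref{c:localconnectionstrong}\ref{i:sparsenew}, the $(j+1)$-fold overcount handled by \Cref{c:localconnectionstrong}\ref{i:backwardstrong}, and the final comparison of $Z(W_i)$ with $\Ex{Z(W_i)}$ via \eqref{e:witnessobs} and the expansion of $d(\tfrac12-p)$. (Only your closing remark is slightly off: the restriction to $i\le 3$ comes from the choice $k=3$ in the $\Boot_k(\gamma)$-process rather than from the arithmetic $3Kj\le 7K$, which is an artefact of your conservative extra $Kj$ loss.)
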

\begin{proof}
  We will inductively construct, for $1 \leq j \leq i$, sets $W_j\subseteq S_0(x, j) \cap \left(\hat{A}_{i-j+1} \setminus \hat{A}_{i-j}\right)$ satisfying~\eqref{e:witness-def}.
  By definition of the $Boot_3(\gamma)$-process, entering $\hat{A}_{i+1}$ requires $x$ to have $d/2 - (3-i)\gamma$ infected neighbours, while entering $\hat{A}_i$ requires only $d/2 - (3-i+1)\gamma$ ones.
  Therefore, the event $x \in \hat{A}_{i+1} \setminus \hat{A}_i$ implies the existence of a set $W_1' \subseteq S(x, 1) \cap \left(\hat{A}_i \setminus \hat{A}_{i-1}\right)$ of recently infected neighbours of $x$ of size $|W_1'|=\gamma$.
  Since $|D \cap S(x, 1)|\leq 1$ by \Cref{c:localconnectionstrong}\ref{i:smallnew}, we may take a subset $W_1 \subseteq W_1' \cap S_0(x,1)$ of size $\gamma - 7K$.
  This settles the induction start.

  Now suppose we have a set $W_j \subseteq \left(S_0(x, j) \cap \left(\hat{A}_{i-j+1} \setminus \hat{A}_{i-j}\right)\right)$ of size $\frac{(\gamma - 7K)^j}{j!}$ for some $1 \leq j < i$.
  Again, by definition of the $Boot_3(\gamma)$-process, each $w \in W_j$ has at least $\gamma(w) \geq \gamma - 1$ (see \eqref{e:gammaconstant}) neighbours in $\hat{A}_{i-j} \setminus \hat{A}_{i-j-1}$.
  By \Cref{c:backwards} each such $w$ has at most $jK$ neighbours in $B(x, j)$.
  Furthermore, by \Cref{c:localconnectionstrong}\ref{i:sparsenew} each such $w$ has at most $jK$ neighbours in $D$.
  Thus, for each $w \in W_j$ there exists a set $W_{j+1, w} \subseteq \left(N(w) \cap S_0(x, j+1) \cap \left(\hat{A}_{i-j} \setminus \hat{A}_{i-j-1}\right)\right)$ of size at least $\gamma - 2jK -1 \geq \gamma - 7K$.
  Moreover, every element of
  \[
    W_{j+1}' \coloneqq \bigcup_{w \in W_j} W_{j+1, w}
  \]
  has at most $j+1$ neighbours in $W_j$ by \Cref{c:localconnectionstrong}\ref{i:backwardstrong} and therefore
  \[
    |W_{j+1}'| \geq |W_j| \frac{\gamma-7K}{j+1} = \frac{(\gamma-7K)^{j+1}}{(j+1)!}.
  \]
  Thus, we may pick a subset $W_{j+1} \subseteq W_{j+1}'$ of the right size.

  It remains to show that \eqref{e:0-statement-dev} holds.
  By definition of the $Boot_3(\gamma)$-process and~\Cref{c:regular}, each $w \in W_i$ has at least $d(w)/2-3 \gamma(w) \geq d/2 - 3\gamma - 4iK$ neighbours in $\hat{A}_0$.
  By \Cref{c:backwards,c:localconnectionstrong}\ref{i:sparsenew}, at least $d/2-3 \gamma -6iK$ of these neighbours lie in $S_0(x, i+1) \cap \hat{A}_0$.
  Setting
  \[
    \zeta \coloneqq \zeta(W_i)= e(W_i, S_0(x, i+1)) \qquad \text{and} \qquad Z \coloneqq Z(W_i) = e(W_i, S_0(x, i+1) \cap \hat{A}_0),
  \]
  we obtain $\Ex{Z}= \zeta p$.
  On the other hand, $W_i \subseteq \hat{A}_1 \setminus \hat{A}_0$ by construction.
  Hence,
  \begin{equation}\label{e:existencewitness:almost}
    Z \geq |W_i|\left(\frac{d}{2}-3 \gamma - 6iK\right) = |W_i|dp + |W_i|\left(\frac{1}{2} \sqrt{d \log d} - \lambda \log \log d \sqrt{\frac{d}{\log d}} - 3 \gamma - 6iK \right),
  \end{equation}
  where the equality uses the definition of $p$.
  By~\eqref{e:witnessobs}, it holds that $|W_i|d \geq \zeta - 3iK|W_i|$, and therefore $|W_i|dp \geq \Ex{Z} - 3iK|W_i|$.
  Together with~\eqref{e:existencewitness:almost}, this implies the desired claim.
\end{proof}

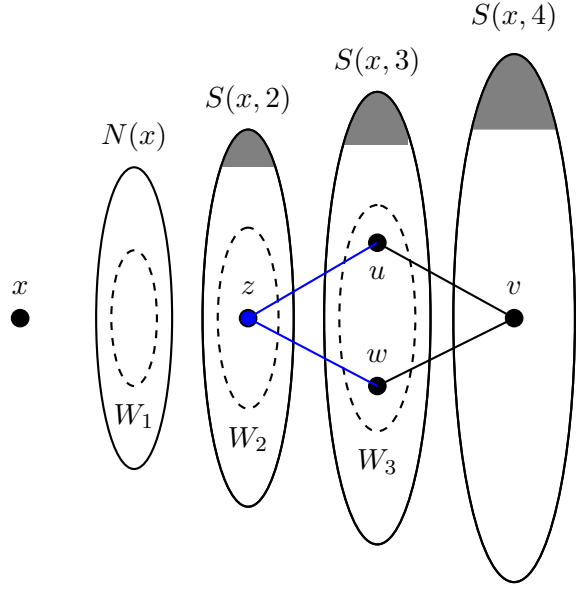
\begin{figure}
  \centering
  \begin{tikzpicture}[scale=1, thick, every node/.style={circle}]
    \node (V) at (0,0) [circle,draw, fill, scale=0.6]  {};
    \begin{scope}
      \draw[clip, rotate=90] (0, -3) ellipse (2.5cm and 0.6cm);
      \fill[gray] (2.6,2) -- (2.6,4) -- (3.4,4) -- (3.4,2) -- cycle;
    \end{scope}
    \begin{scope}
      \draw[clip, rotate=90] (0, -4.7) ellipse (3cm and 0.7cm);
      \fill[gray] (4.25,2.3) -- (4.25,5) -- (5.1,5) -- (5.1,2.3) -- cycle;
    \end{scope}
    \begin{scope}
      \draw[clip, rotate=90] (0, -6.5) ellipse (3.5cm and 0.8cm);
      \fill[gray] (5.7,2.5) -- (6,5) -- (6.9,5) -- (7.4,2.5) -- cycle;
    \end{scope}

    \node[] at (0,0.4) {$x$};
    \draw[rotate=90] (0, -1.5) ellipse (2cm and 0.5cm);
    \node[] at (1.5,2.4) {$N(x)$};
    \draw[rotate=90] (0, -3) ellipse (2.5cm and 0.6cm);
    \node[] at (3,2.9) {$S(x, 2)$};
    \draw[rotate=90] (0, -4.7) ellipse (3cm and 0.7cm);
    \node[] at (4.7,3.4) {$S(x, 3)$};
    \draw[rotate=90] (0, -6.5) ellipse (3.5cm and 0.8cm);
    \node[] at (6.5,4) {$S(x, 4)$};

    \draw[rotate=90,dashed] (0, -1.5) ellipse (0.9cm and 0.3cm);
    \node[] at (1.5,-1.3) {$W_1$};
    \draw[rotate=90,dashed] (0, -3) ellipse (1.2cm and 0.4cm);
    \node[] at (3,-1.6) {$W_2$};
    \draw[rotate=90,dashed] (0, -4.7) ellipse (1.5cm and 0.5cm);
    \node[] at (4.7,-1.9) {$W_3$};

    \node (W) at (6.5,0) [circle,draw, fill, scale=0.6] {};
    \node[] at (6.5,0.4) {$v$};
    \node (W2) at (4.7,-0.9) [circle,draw, fill, scale=0.6] {};
    \node[] at (4.7,-0.5) {$w$};
    \path[draw=black] (6.5,0) -- (4.7,-0.9);
    \node (U) at (4.7,1) [circle,draw, fill, scale=0.6] {};
    \node[] at (4.7,0.6) {$u$};
    \node (U2) at (3,0) [circle,draw, fill=blue, scale=0.6] {};
    \node[] at (3,0.4) {$z$};

    \path[draw=blue] (4.7,1) -- (3,0);
    \path[draw=black] (6.5,0) -- (4.7,1);
    \path[draw=blue] (4.7,-0.9) -- (3,0);
  \end{tikzpicture}
  \caption{A witness $(W_1, W_2, W_3)$ for $i=3$. We consider edges from $W_3$ to $S_0(x, 4)$, in particular cherries like $uvw$. By \Cref{c:localconnectionstrong}\ref{i:cherries} each such cherry can be completed to a $C_4$ with $z=z(uvw)$ from $S_0(x, 2)$.}\label{f:witnessfori3}
\end{figure}

To obtain a probability bound for the event that $Z(W_i)$ differs from its expectation, we consider \emph{cherries} between $W_i$ and the next layer $S_0(x, i+1)$.
More precisely, for a witness $(W_j)_{1 \leq j \leq i}$, let $m(W_i)$ denote the number of copies of $K_{1, 2}$ with centre in $S_0(x, i+1)$ and endpoints in $W_i$ (see \Cref{f:witnessfori3}).
As a warm-up, let us prove that every witness $(W_j)_{1 \leq j \leq i}$ with $|W_i|=s$ satisfies
\begin{equation}\label{eq:mWi}
  m(W_i)\leq \frac{isd}{2} + O(s).
\end{equation}
Indeed, if we fix one endpoint $w \in W_i$ of a cherry, then we can choose the midpoint $v \in S_0(x, i+1)$ of the cherry in at most $d(w) \leq d+iK$ ways and, by \Cref{c:localconnectionstrong}\ref{i:backwardstrong} there are at most $i$ choices for the other endpoint of the cherry, which must be another neighbour of $v \in S_0(x,i+1)$ in $W_i \subseteq S_0(x,i)$.
In this way, we count each cherry twice, establishing \eqref{eq:mWi}.

We bound the probability of \eqref{e:0-statement-dev} in terms of a function of $\alpha \coloneqq \frac{m(W_i)}{sd}$, where we note that $\alpha \geq 0$ is bounded above by a constant by the argument above.
\begin{claim}\label{c:probboundalpha}
  Let $i \in \{2, 3\}, \alpha \geq 0$ and $\lambda<-1/4$. For a witness $(W_j)_{1 \leq j \leq i}$ with $s = |W_i|$ and $m(W_i)=\alpha s d$, define the event
  \[
    \mathcal{E}(W_i) \coloneqq \left\{ Z(W_i) \geq \Ex{Z(W_i)} + s\left(\frac{1}{2} \sqrt{d \log d} - \lambda \log \log d\sqrt{\frac{d}{\log d}} - 3 \gamma - 9iK\right) \right\}.
  \]
  Then
  \begin{equation}\label{e:boundonprob}
    \Pr{\mathcal{E}(W_i)} \leq \exp\left( -\left(\frac{s}{2} - \frac{\alpha s}{1+2\alpha}\right)\left(\log d - (4+o(1))\lambda \log \log d\right)\right).
  \end{equation}
\end{claim}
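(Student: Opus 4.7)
The plan is to realise $Z(W_i)$ as a sum of weighted independent Bernoulli random variables indexed by the centre vertices $v \in S_0(x, i+1)$, and then apply \Cref{l:sumBin}. Setting $c_v \coloneqq |N(v) \cap W_i|$ and $X_v \coloneqq \mathbf{1}[v \in \hat{A}_0]$, I have $Z(W_i) = \sum_v c_v X_v$ with independent $X_v \sim \mathrm{Bernoulli}(p)$. By \Cref{c:localconnectionstrong}\ref{i:backwardstrong}, each $c_v \leq i+1$, so setting $d_j \coloneqq |\{v \in S_0(x,i+1) : c_v = j\}|$ lets me rewrite $Z(W_i) = \sum_{j=1}^{i+1} j Y_j$ with $Y_j \sim \mathrm{Bin}(d_j, p)$ independent, which is exactly the setup of \Cref{l:sumBin}.

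The crux of the argument will be the double-counting identity
\[
  D \coloneqq \sum_{j=1}^{i+1} j^2 d_j = \sum_v c_v^2 = \sum_v c_v + 2\sum_v \binom{c_v}{2} = \zeta(W_i) + 2 m(W_i),
\]
since $\sum_v \binom{c_v}{2}$ precisely counts the copies of $K_{1,2}$ with centre in $S_0(x, i+1)$ and both endpoints in $W_i$. Together with \eqref{e:witnessobs} and the hypothesis $m(W_i) = \alpha s d$, this gives $D = s d (1 + 2\alpha)(1 + O(1/d))$; the factor $(1+2\alpha)$ is what ultimately produces the $\alpha$-dependent weakening in \eqref{e:boundonprob}. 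Taking $\tau \coloneqq s\bigl(\tfrac{1}{2}\sqrt{d \log d} - \lambda \log\log d \sqrt{d/\log d} - 3\gamma - 9iK\bigr)$, \Cref{l:sumBin} then yields $\Pr{\mathcal{E}(W_i)} \leq \exp(-2\tau^2/D)$.

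It will remain to expand the exponent, which is essentially bookkeeping. Using $\gamma = \sqrt{d/\log d}$, the subleading terms of $\tau$ contribute only an additive $O(\sqrt{d/\log d})$ correction to the leading $\tfrac{s}{2}\sqrt{d \log d}$, so I can factor
\[
  \tau = \frac{s \sqrt{d \log d}}{2} \left( 1 - \frac{2 \lambda \log\log d}{\log d} + O\!\left( \tfrac{1}{\log d} \right) \right).
\]
Squaring and using that the cross-terms are of order at most $O((\log\log d)^2/(\log d)^2)$ will give $\tau^2 = \tfrac{s^2 d}{4}\bigl(\log d - 4 \lambda \log\log d + O(1)\bigr)$. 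Dividing by $D/2$ and invoking the elementary identity $\tfrac{s}{2(1+2\alpha)} = \tfrac{s}{2} - \tfrac{\alpha s}{1+2\alpha}$ then produces exactly the right-hand side of \eqref{e:boundonprob}. The one subtlety to verify is that the additive $O(1)$ inside the squared factor, together with the multiplicative $1 + O(1/d)$ error coming from $D$, can both be absorbed into the advertised $(4+o(1))\lambda \log\log d$ term; this works because $\log\log d \to \infty$ while $\lambda < -1/4$ remains bounded away from $0$, so each stray error is $o(1)$ times $\lambda \log\log d$. I anticipate this tracking of lower-order terms to be the only, and rather mild, obstacle to making the calculation rigorous.
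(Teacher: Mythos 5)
Your proposal is correct and follows essentially the same route as the paper's proof: decomposing $Z(W_i)$ according to the number of neighbours each vertex of $S_0(x,i+1)$ has in $W_i$, using the double count $\sum_j j^2 d_j = \zeta(W_i) + 2m(W_i)$, applying \Cref{l:sumBin}, and absorbing the lower-order terms into the $(4+o(1))\lambda\log\log d$ factor exactly as the paper does via \eqref{eq:boundont2} and \eqref{eq:terminhoeffding}. The remaining bookkeeping you defer is indeed routine, and your error-absorption argument (valid since $\log\log d \to \infty$ and $\lambda$ is a fixed nonzero constant) matches the paper's.
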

\begin{proof}
  For $j \in [i+1]$, let $X_j$ be the set of elements of $S_0(x,i+1)$ having $j$ neighbours in $W_i$.
  By \Cref{c:localconnectionstrong}\ref{i:backwardstrong} every vertex in $S_0(x, i+1)$ sends at most $i+1$ edges to $W_i$ and so
  \[
    \zeta = e(W_i, S_0(x, i+1)) = \sum_{j=1}^{i+1} j \cdot |X_j|.
  \]
  On the other hand,
  \[
    m \coloneqq m(W_i) = \sum_{j=1}^{i+1} \binom{j}{2} |X_j| = \frac{1}{2} \left( \sum_{j=1}^{i+1} j^2 \cdot |X_j|\right) - \frac{\zeta}{2},
  \]
  and so $\sum_{j=1}^{i+1} j^2 \cdot |X_j| = 2m + \zeta$.
  Since $Z(W_i)$ is the sum of independent random variables with distribution $j \cdot \Bin(|X_j|,p)$ for $j \in [i+1]$, we can apply Hoeffding's inequality (\Cref{l:sumBin}), noting that $D(i+1)=2m + \zeta$, to conclude that for any $t \geq 0$ we have
  \begin{equation}\label{eq:deviationhoeffding}
    \Pr{Z(W_i) \geq \Ex{Z(W_i)} + t} \leq \exp \left( - \frac{2t^2}{2m+\zeta}\right).
  \end{equation}
  We will apply \eqref{eq:deviationhoeffding} with
  \[
    t =s\left(\frac{1}{2} \sqrt{d \log d} - \lambda \log \log d\sqrt{\frac{d}{\log d}} - 3 \gamma - 9iK\right) = \frac{s\sqrt{d}}{2} \left(\sqrt{\log d} - (2 + o(1))\frac{\lambda \log\log d}{\sqrt{\log d}}\right),
  \]
  noting that
  \begin{equation}\label{eq:boundont2}
    t^2 = \frac{s^2 d}{4}\bigl(\log d - (4 + o(1)) \lambda \log \log d\bigr).
  \end{equation}
  In particular, recalling that $\zeta = s(d + O(1))$ from \eqref{e:witnessobs} and that $\alpha = m/sd = O(1)$ from \eqref{eq:mWi}, for $d$ large enough \eqref{eq:boundont2} implies that
  \begin{align}\label{eq:terminhoeffding}
    \frac{2t^2}{2m + \zeta} &= \left(\frac{s}{4\alpha + 2 + O(1/d)}\right)\bigl(\log d - (4 + o(1)) \lambda \log \log d\bigr)\nonumber\\
                            &= \left(\frac{s}{2} - \frac{\alpha s}{1 + 2\alpha}\right)\bigl(\log d - (4+o(1))\lambda \log \log d\bigr).
  \end{align}
  Combining \eqref{eq:deviationhoeffding} and \eqref{eq:terminhoeffding} yields the claim.
\end{proof}

The probability bound in \Cref{c:probboundalpha} gets worse when $\alpha$ is large, i.e., if there are more cherries.
On the other hand, we claim that there are significantly fewer witnesses with a large number of cherries.
In other words, the larger $\alpha$ is, the fewer witnesses $(W_j)_{1 \leq j \leq i}$ there are with $m(W_i)=\alpha sd$.
The following claim formalises this statement.
In fact, we will bound the number of choices for $W_i$ with $m(W_i)=\alpha s d$ for a given $(W_j)_{1 \leq j<i}$.

\begin{claim}\label{c:numberchoicesalpha}
  Given $i \in \{2, 3\}$, $\alpha \in \mathbb{R}$ and $(W_j)_{1 \leq j<i}$, let $s = (\gamma - 7K)^{i}/i!$.
  There are at most
  \[
    \exp\left( \left(\frac{1}{2}-\alpha\right)s \log d + \left(\alpha + \frac{1}{2}\right) s \log \log d + O(s)\right)
  \]
  witnesses $(W_j)_{1 \leq j \leq i}$ with $m(W_i)=\alpha s d$.
\end{claim}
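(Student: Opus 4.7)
The plan is to encode each vertex of $W_i$ via its neighbourhood in $W_{i-1}$ and to exploit the cherry count $m(W_i) = \alpha s d$ to restrict that encoding.

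First, set $k(u) \coloneqq |N(u) \cap W_{i-1}|$ for each $u \in W_i$. By \Cref{c:localconnectionstrong}\ref{i:backwardstrong} and the fact that $W_i \subseteq N(W_{i-1})$ we have $1 \le k(u) \le i$. Partition $W_i = W_i^{(1)} \cup W_i^{(\ge 2)}$ accordingly and write $n_j \coloneqq |W_i^{(j)}|$, so $n_1 + n_2 = s$. For $u \in W_i^{(1)}$ one can specify $u$'s unique $W_{i-1}$-neighbour $w$ and then $u$ itself as an element of $N(w) \cap S_0(x,i)$, giving a pool of at most $|W_{i-1}|(d+iK)$ candidates. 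For $u \in W_i^{(\ge 2)}$, \Cref{c:localconnectionstrong}\ref{i:cherrynew} applied with $\ell = i-1$ says that any two vertices in $S_0(x,i-1)$ share at most one common neighbour in $S_0(x,i)$, so $u$ is uniquely determined by any pair in $N(u) \cap W_{i-1}$, giving a pool of at most $\binom{|W_{i-1}|}{2}$ candidates. Choosing both parts independently yields
\[
  \#\{W_i\} \le \binom{|W_{i-1}|(d+iK)}{n_1}\binom{\binom{|W_{i-1}|}{2}}{n_2}.
\]

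Second, I would use the cherry hypothesis to force $n_2 \ge (2\alpha - o(1))s$. By \Cref{c:localconnectionstrong}\ref{i:cherries}, every forward cherry $\{u_1,u_2\} \subseteq W_i$ with centre in $S_0(x,i+1)$ comes with a common back-neighbour $z \in S_0(x,i-1)$, and the contrapositive of \ref{i:cherrynew} shows that $z$ is unique for each pair. Double counting the resulting back-cherries through the degree sequence $\tau_z = |N(z) \cap W_i|$ on $S_0(x,i-1)$, against the bound $|N(u) \cap S_0(x,i-1)| \le i$ from \ref{i:backwardstrong}, and distinguishing cherries whose $z$ lies in $W_{i-1}$ from those whose $z$ lies in $S_0(x,i-1) \setminus W_{i-1}$, should yield a lower bound of this form.

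Finally, I substitute $|W_{i-1}| = (\gamma - 7K)^{i-1}/(i-1)!$ and $\gamma^2 = d/\log d$ into the two binomial coefficients via the estimate $\log\binom{N}{k} = k(\log(N/k)+1)+O(\log k)$, and optimise over $n_1 + n_2 = s$ subject to the cherry-driven lower bound on $n_2$. The two contributions should combine to $s \cdot (\log d + \log\log d)/2$ from the $W_i^{(1)}$ part and to a saving of $\alpha s(\log d - \log\log d)$ from the extra pairs in $W_i^{(\ge 2)}$, yielding the claimed bound after collecting the $\log d$ and $\log\log d$ coefficients and absorbing lower-order errors into the $O(s)$ term.

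The main obstacle is the second step: cherries whose back-neighbour $z$ lies in $W_{i-1}$ force both endpoints into $N(z) \cap W_i$ but not necessarily into $W_i^{(\ge 2)}$, while cherries with $z \notin W_{i-1}$ only force $|N(u) \cap S_0(x,i-1)| \ge 2$ for the endpoints, which is again weaker than $k(u) \ge 2$. Converting these weaker structural conditions into the required lower bound on $n_2$ is where the full strength of \Cref{c:localconnectionstrong} is needed; for $i = 3$ it may even be necessary to iterate the encoding through $W_{i-2}$ in order to recover the precise $\log d$ coefficient, since a single-layer pair encoding in $W_{i-1}$ saves only in the $\log\log d$ coefficient rather than in the $\log d$ one.
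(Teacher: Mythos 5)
Your encoding of $W_i$ by neighbourhoods in $W_{i-1}$ does not work, because the forcing step at its heart is false: the cherry count $m(W_i)=\alpha sd$ does \emph{not} force $n_2=|W_i^{(\ge 2)}|\gtrsim \alpha s$. The back-vertex $z$ supplied by \Cref{c:localconnectionstrong}\ref{i:cherries} lies in $S_0(x,i-1)$ but need not lie in $W_{i-1}$, and even when it does it contributes only the single $W_{i-1}$-neighbour $z$ to each endpoint, exactly the two loopholes you flag as ``the main obstacle''---and they are fatal. Concretely, in the hypercube with $i=3$ take $W_2$ to consist of pairs inside a ground set $T$ of size $\Theta(\gamma)$, and let $W_3$ be the union of roughly $s/d$ ``clusters'' $\{z\cup\{c\}\colon c\notin T\}$ over distinct $z\in W_2$: every vertex of $W_3$ has exactly one neighbour in $W_2$ (so $n_2=0$), yet each cluster of size $\approx d$ contributes $\approx d^2/2$ cherries, giving $m(W_3)\approx sd/2$, i.e.\ $\alpha\approx 1/2$. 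The quantity that \emph{is} forced to be large is $|N(H)\cap W_i|$, where $H$ is the set of \emph{all} vertices of $S_0(x,i-1)$ (unrelated to $W_{i-1}$) with at least $\tau=d/(2i\log d)$ neighbours in $W_i$: by \Cref{c:localconnectionstrong}\ref{i:backwardstrong} one has $|H|\le is/\tau$, low-co-degree back-vertices account for only $O(sd/\log d)$ cherries, pairs inside $H$ are controlled via \Cref{c:localconnectionstrong}\ref{i:cherrynew}, and one gets $|N(H)\cap W_i|\ge 2\alpha s-s/\log d$. The saving then comes from $|N(H)|\le |H|\,d$ being a pool smaller than $\gamma^{i-1}d$ by a factor $\Theta(\gamma/\log d)$, i.e.\ $\tfrac12(\log d-\log\log d)-O(\log\log d)$ per forced element; this is the argument the paper actually runs.

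Independently of the forcing issue, your bookkeeping cannot reach the claimed bound for $i=3$ even if a lower bound $n_2\ge 2\alpha s$ were available: the pair-pool $\binom{|W_{i-1}|}{2}\approx\gamma^{2(i-1)}$ is smaller than the single-neighbour pool $|W_{i-1}|d\approx\gamma^{i-1}d$ only by a factor $\Theta(\log d)$ when $i=3$ (since $d=\gamma^2\log d$), so each vertex placed in $W_i^{(\ge2)}$ saves only $\log\log d+O(1)$ in the exponent, for a total of $O(\alpha s\log\log d)$ instead of the required $\alpha s(\log d-\log\log d)$; your split ``$\tfrac{s}{2}(\log d+\log\log d)$ plus a saving of $\alpha s(\log d-\log\log d)$'' is only consistent for $i=2$, where in fact $\alpha=O(1/\log d)$ automatically, so that case carries no content. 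Your suggested fix of iterating through $W_{i-2}$ does not address the first problem, since the high-co-degree back-vertices driving the cherry count need not meet any witness layer. Finally, note a sensitivity issue in any such argument: an additive $o(s)$ slack in the forced count gets multiplied by $\log d$ in the exponent and is not absorbable into the $O(s)$ term, which is why the paper's explicit error term $s/\log d$ in $\ell_0=2\alpha s-s/\log d$ matters.
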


\begin{proof}
  Let $\tau = d/(2 i \log d)$.
  We first fix integers $\ell, h$ and a set $H \subseteq S_0(x, i-1)$ of size $h$.
  Then we count the number of choices for $W_i \subseteq S_0(x,i)$ such that $|W_i| = s$, $m(W_i) = \alpha s d$,
  \[
    \{ v \in S_0(x, i-1)\, :\, |N(v) \cap W_i| \geq \tau \} = H, \]
  and $|N(H) \cap W_i| = \ell$.
  We start by noticing that every $v \in S_0(x, i)$ has at most $i$ neighbours in $W_{i-1}$ by \Cref{c:localconnectionstrong}\ref{i:backwardstrong}, and therefore
  \begin{equation}\label{eq:boundonh}
    h \leq \frac{is}{\tau}.
  \end{equation}
  The key idea to count the number of choices for $W_i$ given these parameters is as follows.
  We claim that imposing $m(W_i) = \alpha s d$ forces $\ell$ to be at least $2\alpha s - s/\log d$, i.e, this many elements of $W_i$ must come from $N(H)$ instead of being freely selected from $N(W_{i-1})$.
  This is a significant restriction if $\alpha \gg 1/\log d$: Since $|W_{i-1}| = \Theta(s/\gamma)$ while $h = O(s/\tau) = O(s/\gamma^2)$, a random subset of $N(W_{i-1})$ of size $s$ typically intersects $N(H)$ in $s/\gamma \ll \alpha s - s/\log d$ elements.

  To prove the bound on $\ell$, we start by noting that the cherries counted by $m(W_i)$ are paths of the form $uvw$ with $v \in S_0(x, i+1)$ and $u, w \in W_i$.
  For every such cherry, by \Cref{c:localconnectionstrong}\ref{i:cherries} there exists a $z=z(uvw) \in S_0(x, i-1)$ such that $uvwz$ forms a $C_4$ as in \Cref{f:witnessfori3}.
  Therefore, to count all cherries as above, we may pick a $z \in S_0(x, i-1)$ and two elements $u, w \in N(z) \cap W_i$.
  By \Cref{c:localconnectionstrong}\ref{i:cherrynew}, there is at most one $v \in S_0(x, i+1)$ which is a common neighbour of $u$ and $w$.
  Therefore,
  \begin{align*}
    m(W_i) &\leq \sum_{z \in S_0(x, i-1)} \binom{|N(z) \cap W_i|}{2}\\
           &= \sum_{z \in S_0(x, i-1) \setminus H} \binom{|N(z) \cap W_i|}{2} + \sum_{z \in H} \binom{|N(z) \cap W_i|}{2} \\
           &\leq \frac{\tau}{2} \sum_{z \in S_0(x, i-1) \setminus H} |N(z) \cap W_i| + \frac{d + (i-1)K}{2} \sum_{z \in H} |N(z) \cap W_i|,
  \end{align*}
  where in the last inequality we used that $|N(z) \cap W_i| \leq \tau$ for every $z \in H$ and, by \Cref{c:regular}, that $d(z) \leq d + (i-1)K$ for every $z \in S(x, i-1)$.
  Since $(i-1) K \leq \tau$, we may obtain
  \begin{equation}\label{e:cherries:firstbound}
    m(W_i) \leq \frac{\tau}{2} \sum_{z \in S(x, i-1)} |N(z) \cap W_i| + \frac{d}{2} \sum_{z \in H} |N(z) \cap W_i|.
  \end{equation}
  We proceed to bound the two sums.
  By \Cref{c:localconnectionstrong}\ref{i:backwardstrong}, every vertex of $W_i$ has at most $i$ neighbours in $S(x, i-1)$ and therefore
  \begin{equation}\label{eq:countneighboursz}
    \sum_{z \in S(x, i-1)} |N(z) \cap W_i| \leq is.
  \end{equation}
  For the second sum, double-counting and using that $n \leq 1+ \binom{n}{2}$ leads to
  \begin{equation}\label{eq:boundellfirst}
    \sum_{z \in H} |N(z) \cap W_i| = \sum_{w \in N(H) \cap W_i} |N(w) \cap H| \leq \sum_{w \in N(H) \cap W_i} \left(1 + \binom{|N(w) \cap H|}{2}\right).
  \end{equation}
  Observe that, by \Cref{c:localconnectionstrong}\ref{i:cherrynew}, each pair of elements $u, v \in H \subseteq S_0(x, i-1)$ can be contained in the neighbourhood of at most one $w \in W_i$.
  Therefore, the sum on the right-hand side of~\eqref{eq:boundellfirst} is at most $\ell + \binom{h}{2}$.
  Substituting~\eqref{eq:countneighboursz}, \eqref{eq:boundellfirst} and this bound into~\eqref{e:cherries:firstbound} leads to $m(W_i) \leq \frac{is\tau}{2} + \frac{d}{2}\left(\ell + \binom{h}{2}\right)$, and therefore
  \begin{equation}\label{e:ell:lower}
    \ell \geq \frac{2m(W_i) - is\tau}{d} - \binom{h}{2}.
  \end{equation}
  Recalling that $\tau = d / (2 i \log d)$, we may estimate
  \begin{equation}\label{eq:boundonparameters}
    \frac{is\tau}{d} = \frac{s}{2\log d} \quad \text{ and } \quad \binom{h}{2} \leq h^2 \leq \frac{i^2 s^2}{\tau^2} = o\left(\frac{s}{\log d}\right),
  \end{equation}
  since $i \leq 3$ and $s = \Theta\bigl(d^{i/2}/(\log d)^{i/2}\bigr) = o\bigl(\tau^2 / \log d\bigr)$.
  Setting $\ell_0 \coloneqq 2\alpha s - \frac{s}{\log d}$,~\eqref{e:ell:lower} and~\eqref{eq:boundonparameters} imply that $\ell \geq \ell_0$, as claimed.

  Having obtained a lower bound on $\ell$ (we may assume $\ell_0 \geq 0$), we may sum over all choices of $h, H, \ell, N(H) \cap W_i$ and $W_i \setminus N(H)$.
  Set $d' \coloneqq d+iK$, and recall that any $z \in B(x, i)$ has degree at most $d'$ by \Cref{c:regular}.
  We have that $H \subseteq S_0(x, i-1)$, where $|S_0(x, i-1)| \leq (d')^{i-1}$, that $|N(H)| \leq hd$ and that $W_i \subseteq N(W_{i-1}) \cap S_0(x, i)$, where $|N(W_{i-1}) \cap S_0(x, i)| \leq \gamma^{i-1} d'$.
  Therefore, the total number of choices for $W_i$ is at most
  \begin{equation}\label{eq:totalnumberchoices}
    \sum_{h=0}^{is/\tau} \binom{(d')^{i-1}}{h} \sum_{\ell=\ell_0}^s \binom{hd'}{\ell} \binom{\gamma^{i-1} d'}{s - \ell}.
  \end{equation}
    Using the inequality $\binom{a}{\ell} \leq (\frac{a}{b})^{\ell} \binom{b}{\ell}$, valid for all $1 \leq a \leq b$, we bound
    \[
    \binom{hd'}{\ell} \leq \left(\frac{h}{\gamma^{i-1}}\right)^{\ell} \binom{\gamma^{i-1}d'}{\ell}.
  \]
  The factor $\left(\frac{h}{\gamma^{i-1}}\right)^\ell$ is decreasing in $\ell$, since $\frac{h}{\gamma^{i-1}} = o(1)$ by \eqref{eq:boundonh} and the bound $s = \Theta(\gamma^i)$.
  Therefore, \eqref{eq:totalnumberchoices} is at most
  \begin{equation}\label{eq:boundonmsimplified}
    \sum_{h=0}^{is/\tau} \left(\frac{h}{\gamma^{i-1}}\right)^{\ell_0} \binom{(d')^{i-1}}{h} \sum_{\ell=0}^s \binom{\gamma^{i-1} d'}{\ell} \binom{\gamma^{i-1} d'}{s-\ell}.
  \end{equation}
  Using Vandermonde's identity $\sum_{\ell=0}^s \binom{a}{\ell} \binom{a}{s-\ell}= \binom{2a}{s}$, the bound in \eqref{eq:boundonmsimplified} is
  \begin{equation}\label{eq:vandermonde}
    \sum_{h=0}^{is/\tau} \left(\frac{h}{\gamma^{i-1}}\right)^{\ell_0} \binom{(d') ^{i-1}}{h} \binom{2 \gamma^{i-1} d'}{s}.
  \end{equation}
  Noting that the sum is monotone in $h$ and using the bounds $\binom{a}{b} \leq \left(\frac{ea}{b}\right)^b$ and $d' = (1+o(1))d$ we may upper bound \eqref{eq:vandermonde} further by
  \begin{equation}\label{eq:firstsum}
    \exp\left(- \ell_0 \log \frac{\tau \gamma^{i-1}}{is} +\frac{s}{2}(\log d + \log \log d)+ O(s)\right).
  \end{equation}
  Plugging in $\ell_0=2 \alpha s - \frac{s}{\log d}$, $\tau=\frac{d}{2i\log d}$ and $\gamma=\sqrt{\frac{d}{\log d}}$ into \eqref{eq:firstsum} yields that the number of choices for $W_i$ is bounded by
  \[
    \begin{split}
      &\exp\left(-\left(2 \alpha s - \frac{s}{\log d}\right)\left(\frac{1}{2} \log d - \frac{1}{2} \log \log d\right) +\frac{s}{2}(\log d + \log \log d)+ O(s)\right)\\
      &=\exp\left(\left(\frac{1}{2}-\alpha\right)s \log d + \left(\alpha+\frac{1}{2}\right) s \log \log d +O(s)\right),
    \end{split}
  \]
  as claimed.
\end{proof}

It only remains to combine the claims.
\begin{proof}[Proof of \Cref{l:xinAi+1-Ai}]\label{proof_xinAi+1-Ai}
  Fix $i \in \{2,3\}$ and suppose there is some $x \in \hat{A}_{i+1} \setminus \hat{A}_i$.
  By \Cref{c:existencewitness} there is a witness $(W_j)_{1 \leq j \leq i}$ satisfying \eqref{e:0-statement-dev}.
  Recall that every witness satisfies $W_j \subseteq S_0(x, j)$ and $|W_j|=\frac{(\gamma-7K)^j}{j!}$.
  Set $s = |W_i|$ and $d' = d+iK$.

  There are at most $\prod_{j=1}^{i-1} \binom{|W_{j-1}|d'}{|W_j|}$ choices for $(W_j)_{1 \leq j <i}$ (where we take $W_0 = \{x\}$), and there are $O(sd)$ possible values for $m$.
  Note that $O(sd)\cdot\prod_{j=1}^{i-1} (ejd'/(\gamma-7K))^{\Theta(\gamma^j)} = \exp(o(s \log\log d))$.
  Therefore, letting
  \[
    f(\alpha) \coloneqq \exp\Biggl(-\left(\frac{s}{2} - \frac{\alpha s}{1+2\alpha}\right)\left(\log d - 4\lambda \log \log d\right)+ \left(\frac{1}{2}-\alpha\right)s \log d + \left(\alpha +\frac{1}{2}\right)s \log \log d \Biggr),
  \]
  we observe by \Cref{c:probboundalpha}, \Cref{c:numberchoicesalpha} and the union bound that
  \[
    \Pr{x \in \hat{A}_{i+1}\setminus \hat{A}_i} \leq \exp(o(s \log\log d)) \cdot \max_\alpha f(\alpha).
  \]
  Therefore, it suffices to bound $f(\alpha)$.
  A straightforward calculation shows that
  \begin{equation}\label{e:f-bound}
    f(\alpha) = \exp\Biggl(\frac{1+4\lambda}{2} \cdot s \log\log d + \left(1 - \frac{4\lambda}{1+2\alpha} - \frac{2 \alpha \log d}{(1+2\alpha)\log \log d} \right) \alpha s \log\log d \Biggr).
  \end{equation}
  We claim that
  \[
    f(\alpha) \leq \exp\left(\frac{1+4\lambda}{2} \cdot s \log\log d + O(s)\right).
  \]
  Indeed, if the factor multiplying $\alpha s \log \log d$ in \eqref{e:f-bound} is negative, then the claim is trivial.
  Otherwise, we must have $\frac{2\alpha \log d}{\log \log d}< 1 + 2\alpha - 4\lambda$.
  Since $\alpha = O(1)$, this implies $\alpha = O(\log\log d / \log d)$ in this case.
  But then
  \[
    \left(1 - \frac{4\lambda}{1+2\alpha} - \frac{2 \alpha \log d}{(1+2\alpha)\log \log d} \right) \alpha s \log\log d = o(s),
  \]
  which also implies the claimed inequality.

  Hence, since $\lambda < -1/4$, the desired conclusion holds with $\beta = -\frac{1+4\lambda}{4} > 0$.
\end{proof}

\Cref{l:xinAi+1-Ai} bounds the probability that a vertex gets infected in the third or fourth step of the process.
Since we can explicitly calculate the probability that a vertex gets infected in the first round, it remains to prove an analogous bound for the second step.
It would be possible to adapt the statement and proof of \Cref{l:xinAi+1-Ai} to also cover this case, but the technical details become difficult to keep track of consistently.
Therefore, and since we can allow ourselves to be less careful here, we handle this case separately in the lemma below.

\begin{lemma}\label{l:xinA2-A1}
  Let $x \in V(G)$, $d=d(x)$, $\gamma=\gamma(x)$, $\lambda < -1/4$ and $p = \frac{1}{2}-\frac{1}{2} \sqrt{\frac{\log d}{d}} + \lambda \frac{\log \log d}{\sqrt{d \log d}}$.
  If there is a $K \in \mathbb{N}$ such that $G \in \mathcal{H}(K)$, then there is a $\beta >0$ (independent of $x$) such that
  \[
    \Pr{x \in \hat{A}_2 \setminus \hat{A}_1} \leq \exp(-\beta \gamma \log \log d).
  \]
\end{lemma}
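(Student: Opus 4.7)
The plan is to follow the same three-step template as in the proof of \Cref{l:xinAi+1-Ai}, specialised to $i = 1$, while observing that the argument simplifies considerably because here the witness set $W_1$ is much smaller than the degree $d$. First, if $x \in \hat{A}_2 \setminus \hat{A}_1$, the definition of the $\Boot_3(\gamma)$-process forces $x$ to have at least $d/2 - 2\gamma$ neighbours in $\hat{A}_1$ but fewer than $d/2 - 3\gamma$ in $\hat{A}_0$, so some $W_1' \subseteq N(x) \cap (\hat{A}_1 \setminus \hat{A}_0)$ of size $\gamma$ must exist; after removing the at most $K$ atypical vertices via \Cref{c:localconnectionstrong}\ref{i:smallnew}, we obtain a witness $W_1 \subseteq S_0(x, 1) \cap (\hat{A}_1 \setminus \hat{A}_0)$ of size $s \coloneqq \gamma - 7K$.

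Each $w \in W_1$ has at least $d(w)/2 - 3\gamma(w) \geq d/2 - 3\gamma - O(K)$ neighbours in $\hat{A}_0$, and all but $O(K)$ of these lie in $S_0(x, 2)$ by \Cref{c:backwards} and \Cref{c:localconnectionstrong}\ref{i:sparsenew}. Writing $Z(W_1) \coloneqq e(W_1, S_0(x, 2) \cap \hat{A}_0)$ and $\zeta(W_1) = s(d \pm O(K))$, arguing exactly as in \Cref{c:existencewitness} yields $Z(W_1) \geq \Ex{Z(W_1)} + t$ with
\[
  t = s\left(\tfrac{1}{2}\sqrt{d \log d} - \lambda \log \log d \sqrt{\tfrac{d}{\log d}} - O(\gamma)\right).
\]
By \Cref{c:localconnectionstrong}\ref{i:backwardstrong} every vertex of $S_0(x, 2)$ has at most two neighbours in $W_1$, so $Z(W_1)$ is a sum of independent Bernoulli variables with multiplicities in $\{1,2\}$. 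The key simplification is that the cherry count $m(W_1)$ is here trivially $O(s^2) = O(d/\log d)$ (\Cref{c:localconnectionstrong}\ref{i:cherries} is vacuous at $\ell=1$), so $D(2) \coloneqq \zeta(W_1) + 2m(W_1) = sd(1 + o(1))$. Substituting $\gamma = \sqrt{d/\log d}$ and carefully tracking the $\lambda \log \log d$ correction, Hoeffding's inequality (\Cref{l:sumBin}) yields
\[
  \Pr{Z(W_1) \geq \Ex{Z(W_1)} + t} \leq \exp\left(-\tfrac{s}{2}\log d + 2\lambda s \log \log d + o(s \log \log d)\right).
\]

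Finally, a union bound over the at most $\binom{d+K}{s} \leq \exp\left(\tfrac{s}{2}\log d + \tfrac{s}{2}\log\log d + O(s)\right)$ choices of $W_1 \subseteq S(x,1)$ gives
\[
  \Pr{x \in \hat{A}_2 \setminus \hat{A}_1} \leq \exp\left(\left(\tfrac{1}{2} + 2\lambda\right) s \log \log d + o(s \log \log d)\right),
\]
and since $\lambda < -1/4$ the coefficient $\tfrac{1}{2} + 2\lambda$ is strictly negative; combined with $s = \Theta(\gamma)$, this establishes the lemma for any sufficiently small constant $\beta > 0$. The only delicate point is preserving the sign of the $\log \log d$-order correction through Hoeffding, so that the $s \log \log d$ term coming from $\binom{d+K}{s}$ is beaten, which is why \Cref{l:sumBin} is used rather than a cruder Chernoff bound; no analogue of the cherry-counting \Cref{c:numberchoicesalpha} is needed, since the trivial bound $m(W_1) \leq \binom{s}{2}$ already suffices.
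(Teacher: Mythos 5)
Your proposal is correct and follows essentially the same route as the paper's proof: extract a witness $W_1$ via \Cref{c:existencewitness} at $i=1$, apply Hoeffding (\Cref{l:sumBin}) after bounding the multiplicity-two contribution by $\binom{s}{2}$, and beat the $\binom{d}{s}$ union bound using $\lambda<-\tfrac14$. The only cosmetic remark is that your ``trivial'' bound $m(W_1)\le\binom{s}{2}$ is not quite automatic but rests on \Cref{c:localconnectionstrong}\ref{i:cherrynew} (co-degree at most one into $S_0(x,2)$), which is exactly how the paper bounds $|X_2|$.
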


\begin{proof}
  The proof strategy is similar to above, but we can allow ourselves to be less careful when counting the number of choices for witnesses.
  Suppose that there is a vertex $x \in \hat{A}_2\setminus \hat{A}_1$.
  Note that \Cref{c:existencewitness} also holds for $i=1$, and thus there is a witness $W_1 \subseteq N(x)$ with $|W_1|=s$ and
  \[
    e(W_1, S_0(x, 2) \cap \hat{A}_0) \geq p \cdot e(W_1, S_0(x, 2)) +t
  \]
  for $t=s\left(\frac{1}{2} \sqrt{d \log d} - \lambda \sqrt{\frac{d}{\log d}} \log \log d - 3 \gamma -9K\right)$.

  By \Cref{c:localconnectionstrong}\ref{i:backwardstrong} every vertex in $S_0(x, 2)$ has at most two neighbours in $W_1$.
  Let $X_1, X_2$ denote the set of vertices in $S_0(x, 2)$ that have exactly one or two neighbours in $W_1$, respectively.
  Note that $e(W_1, S_0(x, 2))= |X_1| + 2 |X_2|$.
  Furthermore, since any two vertices in $S_0(x, 1)$ have at most one common neighbour in $S_0(x, 2)$ by \Cref{c:localconnectionstrong}\ref{i:cherrynew} we can bound
  \[
    |X_2| \leq \binom{|W_1|}{2} =O(\gamma^2).
  \]
  This implies
  \[
    |X_1| + 4 |X_2| \leq \gamma d + O(\gamma^2) \leq \left(1+O\big((d \log d)^{-1/2}\big)\right) \gamma d.
  \]

  Again, since $e(W_1,S_0(x,2) \cap \hat{A}_0)$ is the sum of independent random variables of the form $j \cdot \Bin(|X_j|,p)$ from $j=1$ to $2$, we can apply Hoeffding's inequality (first inequality in \Cref{l:sumBin}), noting that $D(2) \leq \left(1+O\big((d \log d)^{-1/2}\big)\right) \gamma d$ and $\Ex{e(W_1, S_0(x, 2) \cap \hat{A}_0)}=p \cdot e(W_1, S_0(x, 2))$, to obtain
  \[
    \Pr{e(W_1, S_0(x, 2) \cap \hat{A}_0) \geq p \cdot e(W_1, S_0(x, 2)) +t} \leq \exp\left(-\frac{2t^2}{\left(1+O\big((d \log d)^{-1/2}\big)\right) \gamma d}\right).
  \]
  Plugging in the bound on $t^2$ from \eqref{eq:boundont2}, substituting $\gamma = s+7K$ and simplifying yields that this probability is at most
  \begin{equation}\label{eq:probA2-A1}
    \exp\left(-\left(\frac{s}{2} \log d -2 \lambda s \log \log d\right) + o(s \log \log d)\right).
  \end{equation}
  Moreover, since $W_1 \subseteq N(x)$ and $\frac{d}{s}= \Theta\left(\frac{d}{\gamma}\right)=\Theta(\sqrt{d \log d})$ there are at most
  \begin{equation}\label{eq:countW}
    \binom{d}{s} \leq \exp\left(\frac{s}{2} \log d + \frac{s}{2} \log \log d +O(s)\right)
  \end{equation}
  possible choices for $W_1$.
  Combining \eqref{eq:probA2-A1} and \eqref{eq:countW} we obtain
  \[
    \Pr{x \in \hat{A}_2 \setminus \hat{A}_1} \leq \exp\left(\left(\frac{1}{2}+2\lambda\right) s \log \log d + o(s \log \log d)\right) = \exp(-\Omega(s \log \log d)),
  \]
  where the last step is due to $\frac{1}{2}+2 \lambda <0$, which follows from $\lambda <-\frac{1}{4}$.
  Recalling that $s=\Theta(\gamma)$, we obtain \Cref{l:xinA2-A1}.
\end{proof}

Having these lemmas at hand, we can now prove the $0$-statement.

\begin{proof}[Proof of $0$-statement in \Cref{t:mainThm-newH}]
  Let $K \in \mathbb{N}$ and let $(G_n)_{n \in \mathbb{N}}$ be a sequence of graphs such that $G_n \in \mathcal{H}(K)$ and $\delta(G_n) \to \infty$ as $n \to \infty$.
  We write $G \coloneqq G_n$.

  Let $\lambda < -\frac{1}{4}$ and let
  \[
    p < \frac{1}{2}-\frac{1}{2} \sqrt{\frac{\log \dmin}{\dmin}} + \lambda \frac{\log \log \dmin}{\sqrt{\dmin \log \dmin}}.
  \]
  Since the function $d \mapsto \frac{1}{2}-\frac{1}{2}\sqrt{\frac{\log d}{d}} + \lambda \frac{\log \log d}{\sqrt{d \log d}}$ is increasing in $d$, by \Cref{l:xinAi+1-Ai} applied with $i=2$ we have that for every $x \in V(G)$
  \[
    \Pr[\big]{x \in \hat{A}_3 \setminus \hat{A}_2} = o(1).
  \]
  Similarly, it follows from \Cref{l:xinA2-A1} that for every $x \in V(G)$
  \[
    \Pr[\big]{x \in \hat{A}_2 \setminus \hat{A}_1} = o(1).
  \]
  Furthermore, since $\gamma(x) = \sqrt{\frac{d(x)}{\log d(x)}}=o\left(\sqrt{d(x) \log d(x)}\right)$, it is a simple consequence of Chernoff's inequality (\Cref{l:Chernoff}) that for every $x \in V(G)$
  \[
    \Pr[\big]{x \in \hat{A}_1 \setminus \hat{A}_0} \leq \Pr[\Big]{\Bin\left(d(x), p\right) \geq \frac{d(x)}{2}-3\gamma(x)} = o(1).
  \]
  Hence, by Markov's inequality whp $\big|\hat{A}_3 \setminus \hat{A}_0 \big| = o\left(|V(G)|\right)$.
  Using Chernoff's inequality again we obtain that whp $\big|\hat{A}_0\big|=\big|A_0\big| \leq \frac{3}{4} |V(G)|$, and hence whp
  \[
    \big|\hat{A}_3\big| = \big|\hat{A}_3 \setminus \hat{A}_0 \big| + \big|\hat{A}_0\big| < |V(G)|.
  \]
  On the other hand, by \Cref{l:xinAi+1-Ai} applied with $i=3$ there exists a $\beta >0$ such that for every $x \in V(G)$
  \[
    \Pr{x \in \hat{A}_4 \setminus \hat{A}_3} \leq \exp(-\beta \gamma(x)^3 \log \log d(x)).
  \]
  However, since $\gamma(x)^3 \log \log d(x)$ is increasing for $x$ sufficiently large, it follows that for every $x \in V(G)$
  \begin{equation}\label{e:finalprobbound}
    \Pr{x \in \hat{A}_4 \setminus \hat{A}_3} \leq \exp\left(-\beta \frac{\delta(G)^{3/2} \log \log \delta(G)}{\log^{3/2} \delta(G)}\right).
  \end{equation}

  Hence, by \eqref{e:finalprobbound} and \Cref{c:bound-larger}
  \[
    \Pr{\hat{A}_4 \neq \hat{A}_3} \leq |V(G)| \exp\left(-\beta \frac{\delta(G)^{3/2} \log \log \delta(G)}{\log^{3/2} \delta(G)}\right) = o(1).
  \]
  It follows that whp $\hat{A}_i = \hat{A}_3$ for all $i \geq 3$ and hence $A_i \subseteq \hat{A}_i = \hat{A}_3 \neq V(G)$ for all $i$.
  Therefore,
  \[
  \Phi(p,G) \coloneqq \Pr{\textbf{A}_p \text{ percolates on } G}= \Pr*{\bigcup_{i=0}^\infty A_i  = V(G)} = o(1),
  \]
completing the proof.
\end{proof}

\begin{remark}\label{rem:optimizing}
  It is perhaps natural to ask at this point if these bounds can be improved by a different choice of parameters.

  Firstly, one might try to consider the $Boot_k(\gamma)$-process for larger values of $k$.
  However, already in \cite{BaBoMo2009} it was shown that this does not lead to better bounds.
  In particular, we cannot prove \Cref{l:xinAi+1-Ai} for $i=4$ using these methods, since the bound on $h$ in \eqref{eq:boundonparameters} no longer holds.

  Secondly, we would be able to cover even larger graphs if we could increase the exponent of $\gamma=\sqrt{\frac{d}{\log d}}$ to a value strictly larger than $1/2$.
  However, as the second order term of the critical threshold is of that order, this would alter the infection process significantly.
\end{remark}

\section{The permutahedron --- \Cref{t:permutahedron}}\label{s:permuta}

In this section we show that the permutahedron lies in the graph class $\mathcal{H}$.
Let us first recall the definition of the permutahedron.
The \emph{$n$-dimensional permutahedron}, which we denote by $P_n$, is the graph whose vertices are the permutations of $[n+1]$, where two permutations $\sigma$ and $\pi$ are joined by an edge if and only if they differ by a transposition of two adjacent positions, i.e., if $\sigma = \pi\tau$ for some $\tau = (i, i+1)$ and $i \in [n]$.
For an illustration of $P_3$ and $P_4$, see \Cref{fig:permutahedron}.
Note that the permutahedron $P_n$ is $n$-regular and of order $(n+1)!$.
In particular, it does not lie in $\mathcal{F}$ as its order is superexponential in its degree.

\begin{figure}
  \centering
  \begin{minipage}{0.45\textwidth}
    \centering
    \begin{tikzpicture}
      \foreach \x/\name in {0/A, 60/B, 120/C, 180/D, 240/E, 300/F} {
        \coordinate (\name) at (\x:2cm);
      }

      \draw[thick] (A) -- (B) -- (C) -- (D) -- (E) -- (F) -- cycle;

      \foreach \name in {A,B,C,D,E,F} {
        \filldraw[black] (\name) circle (2pt);
      }

      \node at (-1,2) {$(1,2,3)$};
      \node at (1,2) {$(1,3,2)$};
      \node at (2.7,0) {$(2,3,1)$};
      \node at (1,-2) {$(3,2,1)$};
      \node at (-1,-2) {$(3,1,2)$};
      \node at (-2.7,0) {$(2,1,3)$};
    \end{tikzpicture}
  \end{minipage}\hfill
  \begin{minipage}{0.45\textwidth}
    \centering
    \begin{tikzpicture}
      \foreach [var=\x, var=\y, var=\z, count=\n] in {
        2/1/0,2/0/1,2/-1/0,2/0/-1,
        1/0/2,0/1/2,-1/0/2,0/-1/2,
        1/2/0,0/2/1,-1/2/0,0/2/-1,
        -2/1/0,-2/0/1,-2/-1/0,-2/0/-1,
        1/0/-2,0/1/-2,-1/0/-2,0/-1/-2,
        1/-2/0,0/-2/1,-1/-2/0,0/-2/-1
      }{\coordinate (n\n) at (\x,\y,\z);}
      \draw (n1)--(n2)--(n3)--(n4)--cycle;
      \draw (n5)--(n6)--(n7)--(n8)--cycle;
      \draw (n9)--(n10)--(n11)--(n12)--cycle;
      \draw (n21)--(n22)--(n23);
      \draw (n13)--(n14)--(n15);
      \draw (n6)--(n10);
      \draw (n2)--(n5);
      \draw (n8)--(n22);
      \draw (n15)--(n23);
      \draw (n7)--(n14);
      \draw (n11)--(n13);
      \draw (n1)--(n9);
      \draw (n3)--(n21);

      \draw[gray] (n17) -- (n18) -- (n19) -- (n20) -- cycle;
      \draw[gray] (n23) -- (n24) -- (n21);
      \draw[gray] (n12) -- (n18);
      \draw[gray] (n12) -- (n18);
      \draw[gray] (n20) -- (n24);
      \draw[gray] (n19) -- (n16);
      \draw[gray] (n17) -- (n4);
      \draw[gray] (n13) -- (n16) -- (n15);
    \end{tikzpicture}
  \end{minipage}
  \caption{$P_3$ and $P_4$}\label{fig:permutahedron}
\end{figure}
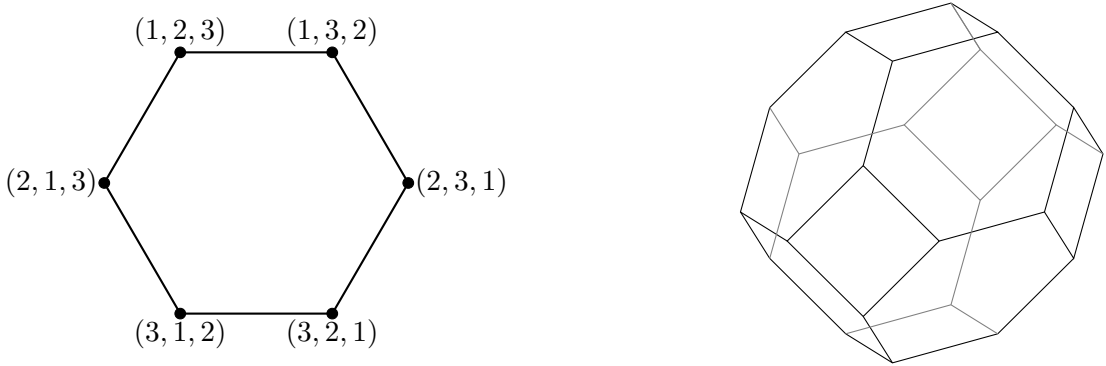

Here, we can think of the local coordinate system as being given by the $n$ adjacent transpositions $\{(i,i+1) \colon i \in [n]\}$.
When $n$ is large and $\ell$ is fixed, a \emph{typical} vertex $w$ at distance $\ell$ from $x$ can be identified uniquely by a set of $\ell$ transpositions which when applied to $x$ (in any order) result in $w$.
Of course, since the symmetric group $S_{n+1}$ is not abelian, this is not true for \emph{every} vertex $w \in S(x,\ell)$, but the set $D$ of non-typical vertices is relatively negligible in size and structure.

While most of the properties required for containment in $\mathcal{H}$ follow from this heuristic after some straightforward calculations, \Cref{c:projection} is slightly more subtle.
Here, a more geometric viewpoint can be helpful.
Let us consider the case of the hypercube as an analogy.
Here, viewing the hypercube as a polytope, the lower dimensional projections, in the sense of \Cref{c:projection} can be taken to be lower dimensional hypercubes, which are then the faces of this polytope.
In the case of the permutahedron, this suggests that the subgraphs in \Cref{c:projection} should be taken to be the $1$-skeleton of appropriately chosen faces of the permutahedron, considered as a polytope.

It can be shown that, unlike the hypercube, the lower dimensional faces of the permutahedron are not themselves lower dimensional permutahedra, but rather \emph{Cartesian products} of lower dimensional permutahedra (see for example \cite[Proposition 2.6]{P09}).
Hence, in order to show that $P_n$ is contained in $\mathcal{H}$, it will be necessary to show that this larger class of graphs lies in $\mathcal{H}$.
We note that a similar `projection' lemma was key to the analysis of the phase transition in bond percolation on the permutahedron in \cite[Section 4]{CDE24}, where again the notion of a projection had to be widened to include Cartesian products of permutahedra.

Let us define then $\mathcal{P}(n)$ to be the class of graphs that are a finite Cartesian product of permutahedra whose dimensions sum up to $n$, i.e., $G \in \mathcal{P}(n)$ if and only if $G= \car_{i=1}^m P_{n_i}$ with $\sum_{i=1}^m n_i =n$.
Note that each $G \in \mathcal{H}(n)$ is also $n$-regular.

We note that it can be shown that the $1$-skeleton of any zonotope, and so in particular every graph in $\mathcal{P}(n)$, is an isometric subgraph of some (possibly higher dimensional) hypercube, see \cite[Lemma 3.2]{CDE24}.
As is easily verified, and follows from \Cref{l:prodinH}, the $n$-dimensional hypercube satisfies
\[
  Q^n \in \mathcal{H}(2),
\]
which allows us to show that most of the properties required in the definition $\mathcal{H}$ hold for graphs in $\mathcal{P}(n)$ via this isometric embedding.
For completeness, and since we will need to explicitly refer to some properties of this isometry, we include a proof of this fact.

It will be convenient to think of the graphs in $\mathcal{P}(n)$ as Cayley graphs.
Indeed, given $k \in \mathbb{N}$ if we take the symmetric group $S_{n+k+1}$ and any subset $I \subseteq \{ (i,i+1) \,:\, 1 \leq i \leq n\}$ of adjacent transpositions of size $k$, then the Cayley graph of the subgroup of $S_{n+k+1}$ generated by $I$ will lie in $\mathcal{P}(n)$.
More explicitly, given some $I \subseteq \{ (i,i+1) \,:\, 1 \leq i \leq n\}$ we can decompose $I$ into maximal \emph{intervals}, sets of the form $\{(i,i+1) \,:\, i_0 \leq i \leq i_1\}$, say $I=I_1\cup \ldots I_m$ where $|I_j| = n_j$, in which case the Cayley graph generated by $I$ will be of the form $\car_{j=1}^m P_{n_j}$.

For a permutation $\pi \in S_{n+1}$ we say that a pair $\{a, b\} \subseteq [n+1]$ is an \emph{inversion} of $\pi$ if $(b-a) \cdot (\pi^{-1}(b) - \pi^{-1}(a)) < 0$, i.e., if the smallest element of the pair appears after the largest one in $\pi$.
Let $\Inv(\pi)$ denote the set of inversions of $\pi$.

Let $Q = Q^{\binom{n+1}{2}}$ denote the hypercube indexed by pairs $\{a, b\} \in \binom{[n+1]}{2}$, and let $f \colon V(P_n) \to Q$ be defined such that for $S = \{a, b\} \in \binom{[n+1]}{2}$ the value $f(\pi)_{S}$ equals $1$ if $\{a, b\}$ is an inversion of $\pi$ and $0$ otherwise (in other words, $f(\pi)$ is the characteristic vector of the set $\Inv(\pi)$ of inversions of $\pi$).
We claim that
\[
  \dist_{P_n}(\pi, \sigma) = |f(\pi) \oplus f(\sigma)| = d_{Q}(f(\pi), f(\sigma)),
\]
where $\oplus$ denotes coordinate-wise addition modulo $2$ (symmetric difference of the corresponding sets $\Inv(\pi)$ and $\Inv(\sigma)$), and so $f$ is an isometric embedding of the permutahedron $P_n$ into the $\binom{n+1}{2}$-dimensional hypercube $Q$.
Let us prove this claim.

Due to the symmetry of the Cayley graph structure, we have $\dist_{P_n}(\pi, \sigma) = \dist_{P_n}(\sigma^{-1}\pi, id) = |\Inv(\sigma^{-1} \pi)|$.
Therefore, it remains to show that $|\Inv(\sigma^{-1}\pi)| = |\Inv(\pi) \symdif \Inv(\sigma)|$.
Indeed, since $\pi^{-1}(i) = (\sigma^{-1}\pi)^{-1}(\sigma^{-1}(i))$ for every $i \in [n+1]$, we have that $\{\sigma^{-1}(a), \sigma^{-1}(b)\}$ is an inversion of $\sigma^{-1}\pi$ precisely when $(\sigma^{-1}(b) - \sigma^{-1}(a)) \cdot (\pi^{-1}(b) - \pi^{-1}(a)) < 0$, i.e., when $a$ and $b$ appear in different orders in $\sigma$ and $\pi$.

Finally, it is easy to verify that if $G_1$ and $G_2$ have isometric embeddings $f_1,f_2$ into $Q^{n_1}$ and $Q^{n_2}$ respectively, then the embedding $f \colon G_1 \car G_2 \to Q^{n_1 + n_2}$ given by $f(x,y) = (f_1(x),f_2(y))$ is also an isometric embedding.
In particular, since each graph in $\mathcal{P}(n)$ is the Cartesian product of permutahedra, one can derive explicit isometric embeddings of any graph in $\mathcal{P}(n)$ into a hypercube.

\begin{lemma}\label{l:permutahedronH}
  \[
    \mathcal{P}(n) \subseteq \mathcal{H}(4).
  \]
\end{lemma}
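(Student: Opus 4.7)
The plan is to verify the six defining properties (P1), (P2), (P3$'$), (P4), (P5), (P6$'$) of $\mathcal{H}(4)$ for every $G = \car_{j=1}^m P_{n_j} \in \mathcal{P}(n)$, leaning heavily on the isometric embedding $f \colon V(G) \to Q$ into a hypercube that the excerpt sets up for a single permutahedron and extends multiplicatively to Cartesian products. Property (P1) is immediate because such $G$ is $n$-regular. Property (P6$'$) follows from $|V(G)| \leq (n+1)! = \exp(O(n \log n))$ which is at most $\exp(\delta(G)^{3/2} \log\log \delta(G) / \log^2 \delta(G))$ for large $n$ since $\log^3 n / \log\log n = o(\sqrt n)$. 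Property (P2) uses bipartiteness coming from the isometric embedding: neighbours of $y \in S(x,\ell)$ lie in $S(x,\ell-1)\cup S(x,\ell+1)$, and the backward ones are in bijection with the descents of the permutation $x^{-1}y$ in each factor, whose total is at most $\ell$.

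For (P3$'$) I will designate as non-typical (within a single factor $P_{n_j}$) any vertex $u$ whose corresponding permutation $x^{-1}u$ has a reduced expression containing a braid pattern $s_i s_{i\pm 1} s_i$; in the Cartesian product, $D$ is the union of vertices that are non-typical in some factor. With this choice a typical vertex at distance $\ell$ is a product of permutations whose inversion sets consist of pairwise "non-interacting" (commuting) adjacent transpositions, so one obtains $|D\cap S(x,\ell)|\leq K^{\ell-1}d(x)^{\ell-1}$ by straightforward counting against the full $\binom{d(x)}{\ell}$ upper bound, giving (i), and the local bound $|D\cap N(y)|\leq K\ell$ for typical $y$ in (ii). Property (iii), uniqueness of the common forward neighbour, and property (iv), $\ell$ backward neighbours, are inherited directly from the corresponding statements in the hypercube via the isometric embedding and the descent count respectively.

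The main obstacle is (P3$'$)(v). In the hypercube, if $u,w \in S(x,\ell)$ share a neighbour $v \in S(x,\ell+1)$, then the inversion-set calculation $\mathrm{Inv}(x^{-1}v) = \mathrm{Inv}(x^{-1}u)\cup\{\{a,b\}\} = \mathrm{Inv}(x^{-1}w)\cup\{\{c,d\}\}$ forces a unique hypercube candidate $z^*$ for the backward common neighbour, determined by $\mathrm{Inv}(x^{-1}z^*) = \mathrm{Inv}(x^{-1}u)\cap \mathrm{Inv}(x^{-1}w)$. The difficulty is showing (a) that $z^*$ corresponds to an honest permutation, i.e.\ that removing the two inversions $\{a,b\}$ and $\{c,d\}$ preserves the closure condition characterising inversion sets, and (b) that the two transitions $u\to z^*$ and $w\to z^*$ are realised by adjacent transpositions in $P_{n_j}$. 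A direct case analysis shows that both can fail exactly when $\{a,b\}$ and $\{c,d\}$ share an element --- precisely the braid configuration $s_i s_{i+1}$ --- and a small example in $P_3$ confirms that such cherries need not have any backward common neighbour. Including such configurations in $D$ removes this pathology and yields (v) while keeping $D$ small enough for (i); the Cartesian product case reduces to the single-factor one because edges in different factors automatically commute.

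For (P4) I will define the projection $G(y)$ as the $1$-skeleton of the unique face of the permutahedral zonotope containing $y$ and antipodal to $x$ through $y$; by \cite[Proposition 2.6]{P09} this face is again a Cartesian product of lower-dimensional permutahedra, so $G(y)\in\mathcal{P}(n')$ for some $n' \leq n$ and, inductively on $n$, lies in $\mathcal{H}(4)$; the remaining bullets of (P4) (containment conditions, degree comparison) follow from the explicit description of this face. Property (P5) is then a counting statement: for typical $y$, the set of typical vertices in $B(y,2\ell-1)\cap S_0(x,\ell)$ is dominated by products of "commuting" inversion patterns shared with $y$, and the bound $\ell K^{\ell-1}d(x)^{\ell-1}$ follows by the same enumeration argument used in (i). The principal labour is concentrated in (P3$'$)(v) and the associated design of $D$, since once the braid pathology is quarantined, the remaining properties transfer cleanly from the hypercube via the isometric embedding.
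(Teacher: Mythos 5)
Your overall strategy (verify the six properties one by one, transfer the hereditary ones through the isometric embedding into the hypercube, handle P4 by faces/cosets and induction on $n$) matches the paper, but the heart of the matter is the choice of the non-typical set $D$ for P3$'$, and there your argument has a genuine gap. You define $D$ as the vertices $y$ for which some reduced word of $x^{-1}y$ contains a braid $s_is_{i\pm1}s_i$, and then assert that a typical vertex is ``a product of pairwise commuting adjacent transpositions''. That equivalence is false: $x s_1 s_2$ is braid-free (its only reduced words are $s_1s_2$ up to nothing) yet its letters do not commute, so your typical set is the much larger class of fully commutative ($321$-avoiding) elements, not the commuting products. Since your verifications of (i), (ii) and P5 are all justified through this false characterisation, they are not actually proved for the $D$ you defined. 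The paper avoids all of this by taking the smaller typical set directly: $y$ is typical iff the $\ell$ transpositions carrying $x$ to $y$ have pairwise disjoint supports; with that choice (i), (ii) and P5 are short counts, and P5 in particular uses that for typical $w,z$ one has $\dist(w,z)=2|T(x,z)\setminus T(x,w)|$, an identity you do not have for your larger typical set.

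The second, and more serious, gap is your treatment of P3$'$(v), which you yourself flag as the main obstacle but do not resolve. You correctly observe that if the two transpositions $\tau_1,\tau_2$ taking $u,w$ up to the common neighbour $v$ are non-commuting (i.e.\ $s_i,s_{i+1}$), then $v$ is the \emph{unique} common neighbour of $u$ and $w$, so no backward common neighbour exists; but your proposed fix, ``including such configurations in $D$'', is not a definition of a vertex set, and you never re-verify (i) and (ii) for the augmented $D$, nor do you check in the commuting case that the candidate $z^*=v\tau_1\tau_2$ is itself typical (which (v) requires, since the common neighbour must lie in $S(x,\ell-1)\setminus D$). In fact, for your braid-free $D$ the pathology is already excluded --- a vertex with two consecutive descents has a reduced word ending in $s_is_{i+1}s_i$, so the centre of any such cherry is non-typical, and your ``small example in $P_3$'' necessarily has a braid as its centre --- but none of this is argued in your proposal, and the final $D$ is left ambiguous. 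The paper's choice makes this step immediate: if $v$ is typical then $T(x,v)$ consists of pairwise disjoint transpositions, so $\tau_1,\tau_2$ commute automatically and $v\tau_1\tau_2$ is typical of distance $\ell-1$. A smaller but related issue is P4: ``the unique face containing $y$ and antipodal to $x$ through $y$'' is not well defined, and the two conditions that actually need work (disjointness from $B(x,\ell-1)$ and degree deficit at most $4\ell$) are exactly what the paper's explicit choice --- delete the at most $4\ell$ generators whose supports meet a position-pair already inverted in $y$ and take the coset through $y$ --- is designed to guarantee.
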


\begin{proof}
  We induct on $n$.
  Let $G \in \mathcal{P}(n)$, where by the discussion above we may without loss of generality assume that $G$ is the Cayley graph of a subgroup $H$ of some permutation group $S_{N+1}$ generated by some subset $I$ of the adjacent transpositions with $|I|=n$ which contains the identity.

  \begin{proof}[\Cref{c:regular}]\let\qed\relax
    $G$ is $n$-regular.
  \end{proof}

  \begin{proof}[\Cref{c:backwards}]\let\qed\relax
    This property is preserved under taking isometric subgraphs and is increasing in $K$. Since \Cref{c:backwards} holds in the hypercube $Q^n$ with $K=2$, it is also true in $G$ with $K=4$.
  \end{proof}

  \begin{proof}[\Cref{c:localconnectionstrong}]\let\qed\relax
    For a permutation $\sigma$ of $[n+1]$ let the support of $\sigma$ be the set $\supp(\sigma)=\{i \in [n+1] \,:\, \sigma(i)\neq i\}$.
    Given $x \in V(G)$ and $y \in S(x, \ell)$ for some $\ell \geq 1$, define
    \[
      T(x,y) = (\tau_1,\dots, \tau_{\ell})
    \]
    to be an arbitrary sequence of exactly $\ell$ adjacent transpositions in $I$ such that $y = x\tau_1 \dots \tau_\ell$.
    Set
    \[
      S_0(x, \ell)= \{y \in S(x,\ell) \,:\, \text{all transpositions in $T(x,y)$ have pairwise disjoint supports}\}.
    \]
    By definition $S_0(x,\ell) \subseteq S(x, \ell)$. Note that this definition of $S_0(x,\ell)$ implicitly defines the set
    \[
      D \coloneqq \bigcup_{\ell} S(x,\ell) \setminus S_0(x, \ell).
    \]
    Whenever $y \in S_0(x, \ell)$, the transpositions in the sequence $T(x, y)$ can be applied in any order.
    In this case, we will (in a slight abuse of notation) consider $T(x,y)$ as a subset of $I$, which satisfies $|T(x,y)| = \dist(x,y)=\ell$.

    If $w \in S(x,\ell) \cap D$, then $T(x, w) = (\tau_1, \ldots, \tau_\ell)$ is such that there exist $1 \leq i < j \leq \ell$ with $\supp(\tau_i)\cap\supp(\tau_j)\neq\emptyset$.
    Therefore,
    \[
      |S(x,\ell) \cap D| \leq 3 \binom{\ell}{2} n^{\ell-1},
    \]
    and so \ref{i:smallnew} holds.

    If $w \in S_0(x, \ell)$, then a neighbour of $w$ in $S(x, \ell+1) \cap D$ must differ from $w$ in a transposition that has non-disjoint support with one of the transpositions in $T(x, w)$, and hence there are at most $3 \ell$ of them.
    Note that any $w \in S_0(x, \ell)$ has no neighbours in $S(x, \ell-1) \cap D$, and no neighbours at all in $S(x,\ell)$, so \ref{i:sparsenew} holds.

    Furthermore, since properties \ref{i:cherrynew} and \ref{i:backwardstrong} hold in the hypercube $Q^n$, even when $K=2$ and $D=\emptyset$, and are preserved under taking isometric subgraphs and increasing in $K$, they are also satisfied in $G$ for our choice of $D$ and $K$.

    Finally, let $uvw$ be a cherry with $u, w \in S_0(x, \ell)$ and $v \in S_0(x, \ell+1)$.
    It is easy to verify that $|T(x, u) \cup T(x, w)|=\ell+1$ and $v= u \tau_1 = w \tau_2$ for some adjacent transpositions $\tau_1, \tau_2$ with disjoint support.
    Then $v\tau_1\tau_2$ is a common neighbour of $u$ and $w$ in $S_0(x, \ell-1)$, and so \ref{i:cherries} holds.
\end{proof}

\begin{proof}[\Cref{c:projection}]\let\qed\relax
  Without loss of generality, suppose that $x \in V(G)$ is the identity and let $\ell \in \mathbb{N}$ and $y \in S(x,\ell)$.
  Consider the set
  \[
    \Inv'(y) \coloneqq \{\{i, j\} \subseteq [n+1] \,:\, (i-j)\cdot(y(i)-y(j)) < 0\},
  \]
  that is, the set of pairs of \emph{positions} corresponding to inversions of $y$.
  Let $I(y) \subseteq I$ be the set of generators whose support intersects a pair in $\Inv'(y)$.
  Note that $|I(y)| \leq 4 \ell$.

  Let us consider the subgroup $H(y)$ of $S_{N+1}$ generated by all transpositions except those in $I(y)$ and its Cayley graph $\Gamma(y)$.
  The cosets of $H(y)$ in $\Gamma$ form a decomposition of the vertex set, giving rise to a natural packing of copies of $\Gamma(y)$ inside $G$.

  Let $G(y)$ be the copy of $\Gamma(y)$ which contains $y$.
  Clearly $y \in G(y) \subseteq G$.
  By construction, every vertex $z \in G(y)$ satisfies $\Inv'(y) \subseteq \Inv'(z)$ and hence $G(y) \cap B(x, \ell-1)=\emptyset$ and $G(y) \in \mathcal{P}(n')$ for some $n' \geq n-4\ell$.
  In particular, $G(y)$ is $n'$-regular, and for any $w \in V(G(y))$,
  \[
    |d_{G(y)}(w) - d_G(w)|  = n-n' \leq 4\ell.
  \]

  Finally, by the induction hypothesis, $G(y) \in \mathcal{H}(4)$.
\end{proof}

\begin{proof}[\Cref{c:partitioncond}]\let\qed\relax
  Given $x \in V(G)$, $\ell \in \mathbb{N}$ and $w \in S_0(x,\ell)$, let
  \[
    Z = \{ z \in S_0(x,\ell) \,:\, \dist(z,w) \leq 2\ell-1\}.
  \]
  By considering the isometric embedding of $G$ into the hypercube described above, it can be seen that for any $w,z \in S_0(x,\ell)$
  \[
    \dist(w, z) = |T(x, z) \triangle T(x,w)|=2|T(x,z) \setminus T(x,y)|.
  \]
  Hence, for every $z \in Z$,
  \[
    |T(x,z) \setminus T(x,w)| = \frac{1}{2} \dist(w, z) < \ell.
  \]
  It follows that
  \[
    Z \subseteq Z' = \{ z \in S_0(x, \ell) \,:\, |T(x,z) \setminus T(x,w)| \leq \ell-1\},
  \]
  and it is clear that
  \[
    |Z| \leq |Z'| \leq \sum_{i=0}^{\ell-1} \binom{n}{i} \leq 2n^{\ell-1}.
  \]
  \end{proof}

  \begin{proof}[\Cref{c:bound-larger}]\let\qed\relax
    Finally, if $G= \car_{i=1}^m P_{n_i}$ with $\sum_{i=1}^m n_i =n$, then $|V(G)| = \prod_{i=1}^m (n_i)! \leq n! \leq \exp( n \log n)$.
  \end{proof}
  This concludes the proof.
\end{proof}

Finally, we note that \Cref{l:permutahedronH} and \Cref{t:mainThm-newH} together imply \Cref{t:permutahedron}.

\section{The product of stars --- \Cref{t:star}}\label{s:productstars}

When the minimum and maximum degrees of the sequence $(G_n)_{n\in N}$ differ greatly, \Cref{t:mainThm-newH} does not even determine the second term in the expansion of the critical probability, which is sandwiched between a function of the minimum degree and a function of the maximum degree.
It remains a central question to determine for irregular high-dimensional graphs whether there is some \emph{universal} behaviour for irregular high-dimensional graphs, and if so, whether the second term in the expansion of the critical probability is quantitatively controlled by the maximum, minimum or average degree of, if by any of them.
In this section we give the first example of an irregular high-dimensional graph where we can determine this second term asymptotically, and show that in this case it coincides with the lower bound in Theorem \ref{t:mainThm-newH}, and so is controlled by the minimum degree.

Let $q \in \mathbb{N}$ be an arbitrary constant, and let $K_{1,q}$ denote the star on vertex set $\{0, 1, \ldots, q\}$ with centre $0$, so that the edges are all pairs of the form $\{0, i\}$ for $i \in [q]$.
We consider the graph $G_n = \car_{i=1}^n K_{1, q}$ obtained by taking the Cartesian product of $n$ copies of $K_{1,q}$.
We note that $\delta(G_n) = n$ and $\Delta(G_n) = qn$, and a simple calculation shows that the average degree is given by $d(G) = \frac{2qn}{q+1}$.
We will show that the expansion of the critical probability of $G$ has the form
\[
  \frac{1}{2} - \frac{1}{2}\sqrt{\frac{\log n}{n}} + O\left(\frac{\log \log n}{\sqrt{n \log n}}\right),
\]
which we note is independent of $q$.

First, let us show that $G_n$ indeed lies in $\mathcal{H}$.
Since this is true for all Cartesian product graphs with base graphs of bounded size, we prove it here for this more general graph class and obtain the statement for the Cartesian product of stars as a corollary.
\begin{lemma}\label{l:prodinH}
  Let $C \in \mathbb{N}$ be a fixed constant and let $H_1,\ldots, H_n$ be graphs such that $1< |V(H_i)| \leq C$ for all $i \in [n]$.
  Then $G_n \coloneqq \car_{i=1}^n H_i \in \mathcal{H}(C)$ for all $n\in \mathbb N$.
\end{lemma}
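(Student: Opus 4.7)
The plan is to proceed by induction on $n$, with base case the trivial graph $K_1 \in \mathcal{H}(C)$. For the inductive step, fix $G = G_n = \car_{i=1}^n H_i$ and a vertex $x = (x_1,\ldots,x_n) \in V(G)$. The key observation is that distance in a Cartesian product decomposes coordinate-wise, i.e., $\dist_G(x,y) = \sum_{i=1}^n d_{H_i}(x_i,y_i)$. Letting $S_y \coloneqq \{i\in [n] : y_i \neq x_i\}$ denote the support of $y$ relative to $x$, we declare the set of non-typical vertices to be
\[
D \coloneqq \{y \in V(G) \setminus \{x\} : |S_y| < \dist_G(x, y)\},
\]
so that $y \in S_0(x, \ell)$ precisely when $|S_y| = \ell$ and $y_i$ is adjacent to $x_i$ in $H_i$ for every $i \in S_y$.

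With this choice of $D$, Properties \Cref{c:regular}, \Cref{c:backwards}, parts \ref{i:smallnew}--\ref{i:cherrynew} of \Cref{c:localconnectionstrong}, \Cref{c:projection}, \Cref{c:partitioncond}, and \Cref{c:bound-larger} can be verified by a routine adaptation of the argument given for the class $\mathcal{F}$ in \cite[Lemma 8.2]{CEGK24}, using only that $|V(H_i)| \leq C$ for every $i$ and that $\delta(G_n) \geq n$. For \Cref{c:projection}, one takes $G(y)$ to be the face of $G$ obtained by fixing every coordinate in $S_y$ to the value $y_i$ and letting the remaining coordinates vary freely; this subgraph is isomorphic to $\car_{i \notin S_y} H_i$, which lies in $\mathcal{H}(C)$ by the inductive hypothesis, since $|S_y| \geq 1$ whenever $\ell \geq 1$. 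The bound in \Cref{c:bound-larger} is immediate from $|V(G)| \leq C^n \leq C^{\delta(G)}$, which is well below the permitted superexponential threshold.

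The main new content, and the step I expect to require the most care, is the verification of the additional conditions \ref{i:backwardstrong} and \ref{i:cherries} in \Cref{c:localconnectionstrong}, which have no counterpart in the definition of $\mathcal{F}$. For \ref{i:backwardstrong}, let $v \in S_0(x,\ell)$ and suppose $v' \in N(v) \cap S(x,\ell-1)$, so that $v$ and $v'$ differ in exactly one coordinate $j$. Since the edge $vv'$ strictly decreases the distance to $x$, we must have $j \in S_v$ and $v'_j = x_j$; this uniquely determines $v'$ from the choice of $j$, giving at most $|S_v| = \ell$ such neighbours.

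For \ref{i:cherries}, let $uvw$ be a cherry with $u, w \in S_0(x, \ell)$ and $v \in S_0(x, \ell+1)$, and let $j, k$ denote the coordinates changed by the edges $uv$ and $vw$, respectively. Typicality of $v$ forces $j \notin S_u$ (otherwise $d_{H_j}(x_j, v_j) = 2$ and $v$ would be non-typical), so $S_v = S_u \cup \{j\}$, and analogously $S_v = S_w \cup \{k\}$. If $j = k$, the identities $u_i = v_i$ for $i \neq j$ and $w_i = v_i$ for $i \neq k$ would force $u = w$, contradicting the cherry assumption. Hence $j \neq k$, and the vertex $z$ defined by $z_j = x_j$, $z_k = x_k$ and $z_i = v_i$ otherwise is typical, lies in $S_0(x, \ell-1)$, and is a common neighbour of both $u$ and $w$, completing the inductive verification.
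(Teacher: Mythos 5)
Your proposal is correct and takes essentially the same route as the paper: the shared properties are inherited from the $\mathcal{F}$-membership argument of \cite[Lemma 8.2]{CEGK24}, and the genuinely new content is the direct verification of \Cref{c:localconnectionstrong}\ref{i:backwardstrong} and \ref{i:cherries} via the coordinate-support description of typical vertices, including the same completion of a cherry $uvw$ to a $C_4$ through the vertex $z$ agreeing with $x$ outside $S_u \cap S_w$. Your explicit induction on $n$, making the projection subgraphs $\car_{i \notin S_y} H_i$ lie in $\mathcal{H}(C)$ rather than merely $\mathcal{F}(C)$, is a slightly more careful handling of \Cref{c:projection} than the paper spells out, but the substance is identical.
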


\begin{proof}
  By \cite[Lemma 8.2]{CEGK24}, each Cartesian product graph $\car_{i=1}^n H_i$ whose base graphs $H_i$ are of bounded size, i.e., $|V(H_i)| \leq C$, is in $\mathcal{F}(C)$.
  Thus, it remains to show Properties~\ref{c:localconnectionstrong}\ref{i:backwardstrong} and~\ref{c:localconnectionstrong}\ref{i:cherries} for $G_n$.

  As in \cite{CEGK24}, given distinct vertices $x,y \in V(G)$, let us define
  \[
    I(x,y) \coloneqq \{ i \in [n] \,:\, x_i \neq y_i\},
  \]
  noting that $1\leq|I(x,y)| \leq \dist(x,y)$.
  Given $x\in V(G)$ let
  \[
    D \coloneqq \{ y \in V(G) \,:\, |I(x,y)| \neq \dist(x,y)\},
  \]
  so that for all $\ell \in \mathbb{N}$
  \[
    S_0(x,\ell) \coloneqq S(x,\ell) \setminus D = \{y \in S(x,\ell) \,:\, |I(x,y)| =\ell\}.
  \]

  \begin{proof}[\Cref{c:localconnectionstrong}\ref{i:backwardstrong}]\let\qed\relax
    Let $v \in S_0(x, \ell)$.
    By definition $|I(x, v)|=\ell$, i.e., $v$ differs from $x$ in exactly $\ell$ coordinates.
    Thus, to reach a neighbour of $v$ in $S(x, \ell-1)$, one must change one of these coordinates to the value it assumes in $x$, which implies $|N(v) \cap S(x, \ell-1)|=\ell$.
  \end{proof}

  \begin{proof}[\Cref{c:localconnectionstrong}\ref{i:cherries}]\let\qed\relax
    If $u, w \in S_0(x,\ell)$ are distinct, and have a common neighbour $v \in S_0(x,\ell+1)$, then it is easy to verify that $|I(x,w) \cup I(x,u)| = \ell+1$, and $v$ is the unique vertex in $S_0(x,\ell+1)$ which agrees with $w$ on $I(x,w)$, with $u$ on $I(x,u)$ and with $x$ on $[n] \setminus (I(x,w) \cup I(x,u))$.
    Then, there is a vertex $z \in S_0(x, \ell-1)$ which agrees with $u$ and $w$ on $|I(x, w) \cap I(x, u)|$ and with $x$ on $[n] \setminus (I(x, w) \cap I(x, u))$.
    As $|I(x, w) \cap I(x, u)| = \ell-1$, $z$ is a common neighbour of $u,w$ in $S_0(x, \ell-1)$.
    Hence, $N(w) \cap N(u) \cap S_0(x, \ell-1) \neq \varnothing$.
  \end{proof}

  This concludes the proof.
\end{proof}

\begin{corollary}
  Let $q \in \mathbb{N}$ and let $G_n=\car_{i=1}^n K_{1,q}$.
  For all $n \in \mathbb{N}$
  \[
    G_n \in \mathcal{H}(q+1).
  \]
\end{corollary}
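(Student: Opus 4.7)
The corollary is an immediate specialization of \Cref{l:prodinH}. The plan is simply to observe that the star $K_{1,q}$ has exactly $q+1$ vertices, so the hypothesis $1 < |V(H_i)| \leq C$ of \Cref{l:prodinH} is satisfied with $C = q+1$ and $H_i = K_{1,q}$ for every $i \in [n]$ (assuming $q \geq 1$, so that $|V(K_{1,q})| = q+1 > 1$). Applying \Cref{l:prodinH} with this choice of constant directly yields $G_n = \car_{i=1}^n K_{1,q} \in \mathcal{H}(q+1)$.

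There is no real obstacle here; the work has already been done in \Cref{l:prodinH}, which in turn reduces the nontrivial properties to \cite[Lemma 8.2]{CEGK24} (for membership in $\mathcal{F}(C)$) together with the two additional properties \ref{c:localconnectionstrong}\ref{i:backwardstrong} and \ref{c:localconnectionstrong}\ref{i:cherries} that were verified coordinate-wise. The only thing worth flagging is the edge case $q=1$, where $K_{1,1}$ is just a single edge and the product is the hypercube $Q^n$; in that case the conclusion $Q^n \in \mathcal{H}(2)$ is already consistent with what was stated earlier in the paper, so the corollary remains valid.

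Hence the proof reduces to a single line invoking \Cref{l:prodinH} with $C=q+1$.
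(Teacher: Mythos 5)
Your proof is correct and matches the paper's approach exactly: the corollary is stated as an immediate consequence of \Cref{l:prodinH} applied with $C=q+1$, since $|V(K_{1,q})|=q+1>1$. Nothing further is needed.
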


We continue with the proof of \Cref{t:star}.

\begin{proof}[Proof of~\Cref{t:star}]
  If $\lambda < -1/4$, then \Cref{t:mainThm-newH} readily implies that $\Phi(p,G_n) = o(1)$, so it remains to show the $1$-statement.

  So, suppose that $\lambda =\frac{1}{2}+\eps$ for some $\eps>0$.
  For $0 \leq i \leq n$, we denote by $L_i$ the set of vertices with exactly $i$ coordinates corresponding to the centre of a star, that is,
  \[
    L_i = \{ v \in V(G_n) \colon |\{k \in [n]: v_k = 0\}| = i\},
  \]
  which we call the \emph{$i$th layer} and we denote by $L_{[a, b]}$ the set $\bigcup_{i=a}^b L_i$.
  We note that, for every $v \in L_i$,
  \begin{equation}\label{e:degrees}
    \left|N^+(v)\right| \coloneqq |N(v) \cap L_{i+1}| = n-i \qquad \text{ and } \qquad \left|N^-(v)\right| \coloneqq |N(v) \cap L_{i-1}| = iq,
  \end{equation}
  so that $d(v) = iq + (n-i)$.
  We define
  \[
    i_0 = \lceil n^{5/6} \rceil \qquad\text{and}\qquad i_1 = \left\lfloor \frac{n}{q+1} \right\rfloor,
  \]
  noting that $|N^-(v)| \leq |N^+(v)|$ holds for $v \in L_i$ precisely when $i \leq i_1$.

  For all $v \in L_{[0,i_0]}$, $d(v) \leq n + O(n^{5/6})$.
  In particular, for each $v \in L_{[0,i_0]}$
  \[
    \sqrt{\frac{\log d(v)}{d(v)}}=\left(1-O\left(\frac{1}{n^{1/6}}\right)\right) \sqrt{\frac{\log n}{n}} = \sqrt{\frac{\log n}{n}} + o\left(\frac{\log \log n}{\sqrt{n \log n}}\right).
  \]
  Thus, it follows that
  \[
    p \geq \frac{1}{2} - \frac{1}{2} \sqrt{\frac{\log d(v)}{d(v)}} + \left(\frac{1}{2}+\frac{\eps}{2}\right) \frac{\log \log d(v)}{\sqrt{d(v) \log d(v)}},
  \]
  and so, by \Cref{l:super-exponential} there is a $\beta>0$ such that for all $v \in L_{[0,i_0]}$,
  \[
    \Pr*{v \notin A^{(11)}} \leq \exp(-\beta d(v)^2) \leq \exp(-\beta n^2).
  \]
  Since $|V(G_n)| = (q+1)^n = o\left( \exp(\beta n^2) \right)$, it follows that whp $L_{[0,i_0]} \subseteq A^{(11)}$.

  We aim to show that the infection `propagates upwards' through the layers (see \Cref{f:productstars}).
  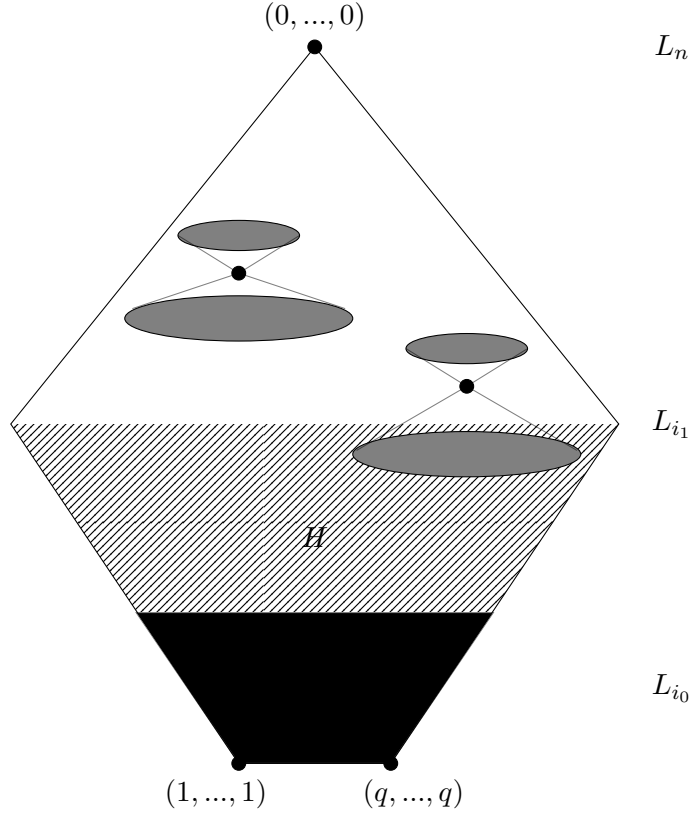
\begin{figure}
    \centering
    \begin{tikzpicture}
      \fill[pattern=north east lines] (-1,0) -- (1,0) -- (4,4.5) -- (3.9,4.5) -- (-3.9,4.5) -- (-4,4.5) -- cycle;
      \fill[draw=gray] (-1,0) -- (1,0) -- (2.35,2) -- (-2.35,2) -- cycle;
      \path[draw=black] (-1,0) -- (1,0) -- (4,4.5) -- (0,9.5) -- (-4,4.5) -- cycle;
      \node (C) at (0,9.5) [circle,draw, fill, scale=0.5] {};
      \node[] at (0,9.9) {$(0,...,0)$};
      \node (L1) at (-1,0) [circle,draw, fill, scale=0.5] {};
      \node[] at (-1.3,-0.4) {$(1,...,1)$};
      \node (L2) at (1,0) [circle,draw, fill, scale=0.5] {};
      \node[] at (1.3,-0.4) {$(q,...,q)$};

      \node[] at (4.7,9.5) {$L_n$};
      \node[] at (4.7,4.5) {$L_{i_1}$};
      \node[] at (4.7,1) {$L_{i_0}$};
      \node[] at (0,3) {$H$};
      \node[scale=2] at (-3.2,2.4) {};

      \node (L1) at (2,5) [circle,draw, fill, scale=0.5] {};
      \draw[fill=gray] (2, 4.1) ellipse (1.5cm and 0.3cm);
      \draw[fill=gray] (2, 5.5) ellipse (0.8cm and 0.2cm);
      \path[draw=black, draw opacity=0.5] (2,5) -- (0.55,4.15);
      \path[draw=black, draw opacity=0.5] (2,5) -- (3.45,4.15);
      \path[draw=black, draw opacity=0.5] (2,5) -- (1.2,5.5);
      \path[draw=black, draw opacity=0.5] (2,5) -- (2.8,5.5);

      \node (L1) at (-1,6.5) [circle,draw, fill, scale=0.5] {};
      \draw[fill=gray] (-1, 5.9) ellipse (1.5cm and 0.3cm);
      \draw[fill=gray] (-1, 7) ellipse (0.8cm and 0.2cm);
      \path[draw=black, draw opacity=0.5] (-1,6.5) -- (-2.4,6.03);
      \path[draw=black, draw opacity=0.5] (-1,6.5) -- (0.4,6.03);
      \path[draw=black, draw opacity=0.5] (-1,6.5) -- (-1.8,7);
      \path[draw=black, draw opacity=0.5] (-1,6.5) -- (-0.2,7);
    \end{tikzpicture}
    \caption{The black subgraph $H'$ consists of the lowest $i_0$ layers. The shaded subgraph $H$ takes up the rest of the lower half of the layer structure, i.e., all layers from $L_{i_0+1}$ to $L_{i_1}$. Every vertex that is not in $H \cup H'$ has more neighbours in the layer below it than in the layer above it.}\label{f:productstars}
  \end{figure}
  We let $H \coloneqq L_{[i_0, i_1+1]}$ and $\alpha=n^{-1/7}$.
  We claim that the set
  \[
    X \coloneqq \left\{w \in H : \left|N^+(w) \cap A^{(0)}\right| \leq \left(\frac{1}{2}-\alpha \right)\left|N^+(w)\right|\right\}
  \]
  is sparse, in the following sense.
  \begin{claim}\label{c:sparse}
    With high probability, every $v \in H$ satisfies $|N(v) \cap X| < d(v)^{1/3}$.
  \end{claim}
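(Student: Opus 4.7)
The plan is to show that for each fixed $v \in H$ and each subset $Y \subseteq N(v)$ of size $s \coloneqq \lceil d(v)^{1/3}\rceil$, the probability $\Pr{Y \subseteq X}$ is at most $\exp(-\Omega(n^{22/21}))$; since there are only $\exp(O(n))$ such pairs $(v,Y)$, a union bound will finish the claim. Note that the bad event $|N(v) \cap X| \geq d(v)^{1/3}$ is precisely the assertion that some $Y \subseteq N(v)$ of this size is contained in $X$.

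First, for $v \in L_i$ with $i \leq i_1 + 1$, every $w \in N(v)$ lies in a layer $L_j$ with $j \leq i_1+2$, so by \eqref{e:degrees}, $|N^+(w)| = n - j \geq \frac{qn}{q+1} - O(1) = \Theta(n)$. Setting $\xi_w \coloneqq |N^+(w) \cap A^{(0)}|$, the event $Y \subseteq X$ forces $\sum_{w \in Y} \xi_w \leq \left(\tfrac12 - \alpha\right) \sum_{w \in Y}|N^+(w)|$. Since $\alpha = n^{-1/7}$ is much larger than $\tfrac12 - p = O(\sqrt{\log n / n})$, this amounts to a deviation of at least $(1-o(1))\alpha \sum_{w\in Y}|N^+(w)| = \Omega(\alpha s n)$ below the expectation $p\sum_{w \in Y}|N^+(w)|$.

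Rewriting $\sum_{w \in Y}\xi_w = \sum_u k_u \mathbf{1}_{u \in A^{(0)}}$ with $k_u \coloneqq |\{w \in Y : u \in N^+(w)\}|$ exhibits the left-hand side as a weighted sum of independent Bernoulli$(p)$ variables. A direct verification in terms of the zero-coordinate sets of $G_n$ shows that any two distinct $w_1, w_2 \in N(v)$ share at most one common upper neighbour (e.g.\ if $w_1, w_2 \in N^+(v)$ zero out coordinates $j_1 \neq j_2$ of $v$, the unique common upper neighbour is $v$ with both $j_1, j_2$ zeroed; the other cases are similar or vacuous). Therefore $\sum_u \binom{k_u}{2} \leq \binom{s}{2}$ and hence $\sum_u k_u^2 \leq s^2 + \sum_{w \in Y}|N^+(w)| = O(sn)$, using $s = O(n^{1/3})$. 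Hoeffding's inequality for weighted Bernoulli sums then yields $\Pr{Y \subseteq X} \leq \exp\left(-\Omega\left((\alpha sn)^2 / (sn)\right)\right) = \exp(-\Omega(\alpha^2 s n)) = \exp(-\Omega(n^{22/21}))$, since $\alpha^2 s n = n^{-2/7} \cdot n^{1/3} \cdot n = n^{22/21}$.

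Finally, the number of pairs $(v, Y)$ is at most $|V(G_n)| \cdot \binom{qn}{s} \leq \exp(n \log(q+1) + O(n^{1/3} \log n)) = \exp(O(n))$, so since $\tfrac{22}{21} > 1$ the union bound gives the claimed $o(1)$ failure probability. The main technical point is the pairwise intersection bound on upper neighbourhoods within $N(v)$: without it, the quadratic variation $\sum_u k_u^2$ could be as large as $s \cdot \sum_u k_u \asymp s^2 n$, and Hoeffding would fail to absorb the $(q+1)^n$ enumeration of vertices.
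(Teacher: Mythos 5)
Your proof is correct and follows essentially the same strategy as the paper: a union bound over vertices $v$ and over size-$\lceil d(v)^{1/3}\rceil$ subsets of $N(v)$, combined with a concentration bound whose exponent $\Omega(\alpha^2 d(v)^{4/3}) = \Omega(n^{22/21})$ beats the $\exp(O(n))$ enumeration, with the key combinatorial input in both cases being the bounded overlap of upper neighbourhoods coming from the product structure. The only (cosmetic) difference is that you keep the multiplicities $k_u$ and apply Hoeffding for weighted Bernoulli sums, whereas the paper passes to the union $\bigcup_{w\in R} N^+(w)$, notes it loses at most $2\binom{|R|}{2}$ elements by the co-degree bound, and applies a plain binomial Chernoff bound.
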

  \begin{proof}
    Fix $v \in H$.
    If $v$ does not satisfy the desired inequality, then there exists a set $R \subseteq N(v)$ such that $|R| = d(v)^{1/3}$ and $R \subseteq X$.
    Since $G_n$ is a Cartesian product of base graphs with co-degree one, it can be seen that for any two distinct vertices $w_1, w_2 \in N(v)$, $|N(w_1) \cap N(w_2)| \leq 2$.
    Furthermore, by \eqref{e:degrees}, for every $w \in H \cup N(H)$, $|N^+(w)| \geq (1/2+o(1))d(v)$.
    Hence, using \Cref{c:regular} we may bound
    \begin{align*}
      m\coloneqq\biggl|\bigcup_{w \in R} N^+(w)\biggr| \geq \sum_{w \in R}  |N^+(w)| - 2\binom{|R|}{2} \geq \sum_{w \in R}  |N^+(w)| - d(v)^{\frac{2}{3}} = (1+o(1)) \frac{d(v)^{\frac{4}{3}}}{2}.
    \end{align*}

    Furthermore, it also follows from the above that
    \[
      \sum_{w \in R} |N^+(w)|  - m \leq d(v)^{\frac{2}{3}}  = o\left( \alpha \sum_{w \in R} |N^+(w)| \right).
    \]

    Hence, by the Chernoff bounds, we can bound the probability of the event $R \subseteq X$ by
    \begin{align*}
      \Pr{R \subseteq X} &\leq \Pr[\bigg]{\bigg|A^{(0)} \cap \bigcup_{w \in R} N^+(w)\bigg| \leq \left(\frac{1}{2}-\alpha\right) \sum_{w \in R}\big| N^+(w)\big|}\\
                         & \leq \Pr[\bigg]{\Bin(m, p) \leq \left(\frac{1-\alpha}{2}\right) m}\\
                         & \leq \exp\left( - \Omega(\alpha^2 m)\right)= \exp\left(-\omega\left(d(v)\right)\right).
    \end{align*}
    Taking a union bound over all choices for $R \subseteq N(v)$ of size $d(v)^{\frac{1}{3}}$, we see that
    \[
      \Pr*{|N(v) \cap X| \geq d(v)^{\frac{1}{3}}} \leq \binom{d(v)}{d(v)^{\frac{1}{3}}} \exp\left(-\omega\left(d(v)\right)\right)=\exp\left(-\omega\left(d(v)\right)\right).
    \]
    Since $|V(G_n)| = \exp(O(\delta(G_n))$, a second union bound over all choices for $v$ concludes the proof of the claim.
  \end{proof}

  By the claim, we may assume that every vertex $v \in H$ has at most $d(v)^{1/3}$ neighbours in $X$.
  Moreover, as mentioned, \Cref{l:super-exponential} implies that $L_{[0,i_0]} \subseteq A^{(11)}$ with high probability.

  Now assume $L_{[0, i]} \setminus X \subseteq A^{(k)}$ for some $i_0 \leq i \leq i_1$ and some $k \in \mathbb{N}$, and let $v \in L_{i+1} \setminus X$.
  Since $v \not\in X$ and $N^-(v) \setminus X \subseteq A^{(k)}$, we have
  \[
    \left|N(v) \cap A^{(k)}\right| \geq \left(\frac{1}{2}-\alpha\right)\left|N^+(v)\right| + |N^-(v)| - d(v)^{1/3} \geq \frac{d(v)}{2},
  \]
  where we used that \eqref{e:degrees} and the fact that $i_0 \leq i$ implies that
  \[
    \frac{|N^-(v)|}{2} = \Omega(n^{5/6}) = \omega\left(  \alpha |N^+(v)| + d(v)^{1/3}\right).
  \]
  It follows that $L_{[0, i+1]} \setminus X \subseteq A^{(k+1)}$.
  Proceeding inductively, we see that every vertex of $L_{[0,i_1+1]} \setminus X$ is eventually infected, or more explicitly that $L_{[0,i_1+1]} \setminus X \subseteq A^{(i_1-i_0+12)}$.

  However, by \Cref{c:sparse}, any $v \in L_{[0,i_1]}$ has at most $d(v)^{1/3}$ neighbours in $X$, and hence at most $d(v)^{1/3}$ uninfected neighbours at round $i_1-i_0+12$.
  Hence, $L_{[0,i_1]} \subseteq A^{(i_1-i_0+13)}$.

  Finally, since by \eqref{e:degrees}, $|N^-(v)| > |N^+(v)|$ for every $v \in L_{[i_1+1, n]}$, it follows that $L_{[0,i_1]}$ is a percolating set.
  Hence, whp the initially infected set is percolating and $\Phi(p,G_n) = 1-o(1)$.
\end{proof}

\section{Discussion}\label{s:discussion}

In this paper we have analysed the location and width of the critical window for the random majority bootstrap percolation for a large family of high-dimensional graphs, in particular including graphs of superexponential order such as the permutahedron, strengthening the results of \cite{CEGK24} and generalising those of \cite{BaBoMo2009}.
It is natural to ask to what extent does this universality phenomenon hold, that is, what is the broadest class of high-dimensional graphs exhibiting this universal behaviour?
This question is discussed in more detail in \cite[Section 9]{CEGK24}.
It is perhaps particularly interesting to consider the graphs which lie in $\mathcal{F} \setminus \mathcal{H}$, such as the middle layers graph.
Here, the lack of $4$-cycles prevent analysis as in \Cref{sec:H'}, and it would be interesting to know if the order of the third term in the expansion of the critical probability can be bounded as in \Cref{t:mainThm-newH} via different methods, or if the large girth here results in a quantitatively different threshold.

For regular graphs in $\mathcal{H}$, \Cref{t:mainThm-newH} determines the first two terms in the expansion of the critical probability, and gives explicit bounds for the order of the third term.
A natural next step would be to determine the third term.
However, even in the `simplest' case of the hypercube this remains open.
\begin{question}
  Does there exist $\lambda^* \in [-2,1/2]$ such that if
  \[
    p \coloneqq \frac{1}{2}-\frac{1}{2} \sqrt{\frac{\log n}{n}} + \lambda \frac{\log \log n}{\sqrt{n \log n}},
  \]
  then
  \[
    \lim_{n \to \infty} \Phi(p, Q^n) = \left\{
      \begin{array} {c@{\quad \textup{if} \quad}l}
        0 & \lambda < \lambda^*,       \\[+1ex]
        1 & \lambda > \lambda^*.
      \end{array}\right.
  \]
\end{question}
It is conjectured in \cite{BaBoMo2009} that $\lambda^*$ exists and is equal to $1/2$.

We have also given the first example of an \emph{irregular} graph for which the second term of the expansion of the critical probability can be determined, where here the quantitive aspects are controlled by its minimum degree.
It is not clear if a similar argument can be made for arbitrary product graphs, where it is not true in general that all high-degree vertices have many neighbours with lower degree -- consider for example a product graph in which each factor is a clique joined to a long path.
In general, it is an intriguing question whether there is some universal behaviour for irregular product graphs, and specifically whether the second term in the critical probability is always controlled by the minimum degree.

\begin{question}
  Let $\varepsilon >0$ be sufficiently small.
  Does there exist a high-dimensional product graph $G$ such that $\lim_{n \to \infty} \Phi(p, G_n) = 0$ for some
  \[
    \frac{1}{2} - \left(\frac{1}{2} - \varepsilon\right) \sqrt{\frac{\log \delta(G)}{\delta(G)}} < p < \frac{1}{2} - \frac{1}{2} \sqrt{\frac{\log \Delta(G)}{\Delta(G)}}?
  \]
\end{question}

In another direction, threshold behaviour has been well-studied in the $r$-neighbour bootstrap percolation process for small constant $r$.
In particular, for $r=2$ a threshold has been determined on the hypercube and other high-dimensional grids \cite{BaBo2006HypercubeR2,BaBoMo2010HighDimGrid}.
It would be interesting to determine a similar threshold in other high-dimensional graphs, for example in the case of the middle layers graph or the permutahedron.
We also note that, even for the hypercube $Q^n$, very little is known about the typical behaviour of the process for fixed $r \geq 3$, see for example \cite[Conjecture 6.3]{BaBoMo2010HighDimGrid}.

\subsection*{Acknowledgements}
This research was funded in whole or in part by the Austrian Science Fund (FWF) [10.55776/DOC183, 10.55776/P36131].
For open access purposes, the author has applied a CC BY public copyright license to any author accepted manuscript version arising from this submission.

\printbibliography

\end{document}